\documentclass[pdflatex,sn-mathphys-num]{sn-jnl}

\usepackage{graphicx}%
\usepackage{multirow}%
\usepackage{amsthm}%
\usepackage{mathrsfs}%
\usepackage[title]{appendix}%
\usepackage{xcolor}%
\usepackage{textcomp}%
\usepackage{manyfoot}%
\usepackage{fullpage}
\usepackage{booktabs}%
\usepackage{algorithm}%
\usepackage{algorithmicx}%
\usepackage{algpseudocode}%
\usepackage{listings}%
\usepackage[dvipsnames]{xcolor}
\usepackage{amsmath,amsfonts,amsthm,bm,mathrsfs, relsize} %

\newtheorem{theorem}{\bf Theorem}[section]

\newtheorem{remark}{\bf Remark}[section]
\newtheorem{definition}{\bf Definition}[section]
\newtheorem{proposition}{\bf Proposition}[section]

\newtheorem{lemma}{\bf Lemma}[section]

\newtheorem{assumption}{\bf {Assumption}}[section]

\newtheorem{example}{\bf Example}[section]
\newtheorem{note}{\bf Note}[section]
\numberwithin{equation}{section}
\newcommand{\q}{\hspace{.1cm}}
\newcommand{\ra}{\rightarrow}

\newcommand{\xt}{\text}
\newcommand{\prob}{\mathbb{P}}
\newcommand{\E}{\mathbb{E}}
\newcommand{\md}{\mathbb{D}}

\newcommand{\ol}{\overline}

\newcommand*\diff{\mathop{}\!\mathrm{d}}

\newcommand{\jt}{\mathbb{J}_1}

\newcommand{\esp}{\mathrm{ess sup}}
\makeatletter
\def\widebreve{\mathpalette\wide@breve}
\def\wide@breve#1#2{\sbox\z@{$#1#2$}%
\mathop{\vbox{\m@th\ialign{##\crcr
\kern0.08em\brevefill#1{0.8\wd\z@}\crcr\noalign{\nointerlineskip}%
$\hss#1#2\hss$\crcr}}}\limits}
\def\brevefill#1#2{$\m@th\sbox\tw@{$#1($}%
\hss\resizebox{#2}{\wd\tw@}{\rotatebox[origin=c]{90}{\upshape(}}\hss$}
\def\widecheck{\mathpalette\wide@breve}
\def\wide@check#1#2{\sbox\z@{$#1#2$}%
	\mathop{\vbox{\m@th\ialign{##\crcr
				\kern0.08em\checkfill#1{0.8\wd\z@}\crcr\noalign{\nointerlineskip}%
				$\hss#1#2\hss$\crcr}}}\limits}
\def\checkfill#1#2{$\m@th\sbox\tw@{$#1($}%
	\hss\resizebox{#2}{\wd\tw@}{\rotatebox[origin=c]{90}{\upshape(}}\hss$}
\numberwithin{equation}{section}  

\begin{document}

\title[Article Title]{Functional limiting behaviour of non-stationary marked Hawkes processes under multi-scaling high-intensity regime}


\author*[1]{\fnm{Ankita} \sur{ Sen}}\email{atikna.math@gmail.com}

\author*[2]{\fnm{Dharmaraja} \sur{   Selvamuthu}}\email{dharmar@maths.iitd.ac.in}

\author*[3]{\fnm{N.} \sur{ Selvaraju}}\email{nselvaraju@iitg.ac.in}
\affil[1]{\orgdiv{Faculty of Data and Decision Sciences}, \orgname{Technion--Israel Institute of Technology}, \orgaddress{\city{Haifa}, \postcode{3200003}, \country{Israel}}}

\affil[2]{\orgdiv{Department of Mathematics}, \orgname{Indian Institute of Technology Delhi}, \orgaddress{\city{New Delhi}, \postcode{110016}, \country{India}}}

\affil[3]{\orgdiv{Department of Mathematics}, \orgname{Indian Institute of Technology Guwahati}, \orgaddress{\city{Guwahati}, \postcode{ 781039}, \country{India}}}



\abstract{
This paper studies the non-stationary marked Hawkes processes in which  the base intensity function  is time-dependent and the kernel function is governed by an external random factor called ``mark". The marks  are assumed to be determined by random events occurrence over time resulting in a non-identical distribution. These facts introduce two kinds of difficulties: one from the absence of the i.i.d. (independent and identical distributed) and the other is from the non-stationarity. In this framework, an asymptotic regime, often referred to as the \textit{high intensity regime}, is considered, under which the intensity increases with time. This can be obtained by multiplying the time parameter by $n^{\alpha}$, for some $\alpha>0$. Under the high intensity regime, the functional law of large numbers and the functional central limit theorem are established. 
The rescaled (centered and normalized) marked Hawkes process is proved to converge in distribution to a Gaussian process, which is the accumulation of Gaussian noise and a diffusion process. In addition, a shot-noise process is studied,  
and the functional Limit Theorems under appropriate hypotheses are established.
}

\keywords{ Marked Hawkes processes, Non-stationary high-intensity regime, Functional central limit theorems, Martingale measure, Shot noise process}



\maketitle

\section{Introduction}\label{section_introduction}
A \textit{point process} is a well-known mathematical model in which a large section of research on point processes has been developed on the \textit{Poisson process} and its natural extension, the non-homogeneous Poisson process, which are suitable for scenarios where the inter-occurrence times are independent. However, many practical phenomena exhibit correlated inter-occurrence times or burstiness in the associated point processes. One way to capture such burstiness or auto-correlation is to incorporate random features within the intensity processes. On this account, a special class of point process, called the \textit{Markov-modulated Poisson processes} (or \textit{Markov-modulated non-homogeneous Poisson processes} (\cite{sen2023diffusion})), is widely applicable in various mathematical models, such as telecommunications systems, queueing systems, mathematical biology, weather modeling and  data traffic areas due to its ability to capture dynamic behaviour. Nevertheless, in many real-life phenomena, the occurrence of one event makes the occurrence of the next event more likely to follow in quick succession, resulting in clusters in the sequence of events of arrivals. As a result, this type of phenomenon can be better modeled using variables that are not memory-less, allowing occurrences to have an impact on subsequent events and resulting in an increment of dependence. A very special class of point processes introduced by \cite{hawkes1971point} is a suitable example of  point processes, where the intensity is determined by the entire history, effectively capturing auto-correlation, over-dispersion, and clustering effects in the counting processes. In the literature, these self-exciting processes are named after Hawkes and are referred to as the \textit{Hawkes processes}, which have wide-ranging applications in finance (\cite{bacry2015hawkes}, \cite{Dharmaraja_book}, \cite{hawkes2018hawkes}) and queueing systems (\cite{selvamuthu2022infinite}, \cite{gao2018functional}).

Hawkes processes are characterized through the intensity process formulation, which is  derived by a base intensity function and a  kernel with respect to the elapsed time of the events (see \cite{hawkes1971point}, \cite{hawkes1971spectra}). Since the dynamic structure of the intensity process is determined by the entire history of the past random events, the process is called {\it self-exciting}. Consequently, the higher the intensity, the higher the possibility of occurrence. However, in many studies, the kernel or the self-exciting function in the formulation of the intensity process may be governed by an exogenous random process, characterized as ``marks'' corresponding to each random event (see \cite{bremaud2002power}, \cite{bremaud2002rate}, \cite{horst2021functional}, and the references therein). This point process is called \textit{marked Hawkes process}. In these aforementioned studies, the marks are assumed to be independent and identically distributed (i.i.d.). However, the statistical evidence presented in \cite{ogata1988statistical} suggests that the marks might be correlated or that their dynamics might be influenced by the evolution of the associated random times. The focus of this paper is to study the marked Hawkes process in which the marks sequence does not follow the ideal i.i.d. property, since their dynamics are highly influenced by the associated random occurrence times, and their mark distribution is governed by the evolution of these random occurrence times. Recently, a study by \cite{li2022functional} on the marked Hawkes process considered a partly similar problem and characterizes it through the immigration–birth (branching) representation.

We consider a marked Hawkes process $\{N^{mH}(t), t\geq 0\}$ with intensity process $\{\lambda^{mH}(t), t\geq 0\}$ defined by 
\begin{equation}\label{def_intensity}
\lambda^{mH} (t):= \mu^{mH} (t)+\sum_{i=1}^{N^{mH}(t)} \phi(t-\tau_{i}, \eta _{i}(\tau_{i})) , \q t\geq 0,
\end{equation} 
where $\{\tau_{i}, i\in \mathbb{N} \}$ is a sequence of random occurrence times, and $\{\eta_{i}(\tau_{i}), i\in \mathbb{N}\}$ is the corresponding mark sequence determined by the time epoch $\tau_{i}$ taking the values from a measurable space $(\mathcal E, \mathscr{B}_{\mathcal E})$. For every $i\in \mathbb{N}$, the conditional distribution of $\eta_{i}(\tau_{i})$ is given by
\begin{equation}\label{def_dist_function_Z}
\prob \left( \eta_{i}(\tau_{i}) \in \mathcal{A}|\tau_{i}=s, \tau_{j}, j\leq i  \right) =\pi_{s}(\mathcal A), \q \mathcal A\in \mathscr{B}_{\mathscr{E}}, \q s\in \mathbb{R}_{+}.
\end{equation}
The deterministic function $\mu^{mH}(.) : \mathbb{R}_{+}\ra \mathbb{R}_{+}$ in the representation (\ref{def_intensity}) is often referred to as the base intensity function and the kernel $\phi(.,.): \mathbb{R}\times \mathcal E \ra \mathbb{R}_{+}$ as the self-exciting function. 
The non-stationary  behaviour appears in the formulation of the marked Hawkes process due to its non-stationary base intensity function $\mu^{mH}(.)$ and the time-dependent distribution function $\pi_{t}(.)$ for the mark sequence $\{\eta_{i}(.), i\in \mathbb{N}\}$. By the construction of (\ref{def_intensity}), we observe that the marked Hawkes process $\{N^{mH}(t), t\geq 0\}$ (even the Hawkes process) does not have the Markov property. But, in particular, if the kernel assumes an exponential form, i.e., $\mu^{mH}(.)=a+e^{-\gamma t}(\mu^{mH}_{0}-a)$ and $\phi(t,z)=be^{-\gamma t} $ for some $a>0$, $\mu^{mH}_{0}>0$, and $b\geq 0$, $\gamma \geq 0$, the  process $\{N^{mH}(t), t\geq 0\}$ does not satisfy the Markov property but together with the intensity process $\{\lambda^{mH}(t), t\geq 0\}$, namely,  the two-dimensional random process $\{(N^{mH}(t), \lambda ^{mH}(t)), t\geq 0\}$ can be proved to satisfy the Markov property.
 Moreover, we point out that we generalize the marked point process study into the higher step, where the mark sequence $\{\eta_{i}(.), i\in \mathbb{N}\}$ no longer follows the i.i.d. property and their distribution is determined by the random occurrence time. Considering the self-exciting function $\phi(.,.)$ under the general framework in the representation (\ref{def_intensity}) (exponential as well as non-exponential), our study covers a significant large domain in the literature of the marked Hawkes process (Markov as well as non-Markov). In addition, excluding the mark dependency in the kernel formation $\phi(.,.)$ in (\ref{def_intensity}), the marked Hawkes process becomes a simple Hawkes process as discussed in Section \ref{section_def_marked Hawkes process}.

We particularly focus on the limiting study of the marked Hawkes process under a certain asymptotic regime, where the intensity becomes larger and larger as the time parameter grows towards infinity. We consider a sequence of marked Hawkes processes $\{N^{mH}_{n}(t), t\geq 0\}$ by indexing the scaling parameter $n\in \mathbb{N}$ along with the intensity process $\{\lambda^{mH}_{n}(t), t\geq 0\}$. We assume that the stochastic evolution of the intensity entity is described by $\lambda ^{mH}_{n}(t)= \lambda^{mH} (n^{\alpha}t)$ for all $t\geq 0$, for some $\alpha>0$, which immediately results the cumulative intensity $\int_0^{n^\alpha t} \lambda ^{mH}(s)  \diff s = \mathcal {O}(n^{\alpha})$ approaching to $\infty$ as $n\ra \infty$.
As a consequence, we refer to this particular asymptotic regime as \textit{high-intensity asymptotic regime}, which is significantly different from the conventional asymptotic regime used in the limit study of the Hawkes process as well marked Hawkes processes. Under the multi-scaling high-intensity regime, we establish the functional law of large numbers and the functional central limit theorem for the scaled marked Hawkes process along with the cumulative intensity process. In particular, to derive the functional law of large numbers, we rescale $N^{mH}_{n}(t)$ by $\frac{N^{mH}_{n}(t)}{n^{\alpha}}$, and accordingly we show in Theorem \ref{thm_LLN_N^{mH}(t)} that the rescaled process converges in distribution to a deterministic process $\{\ol{N}^{mH}(t), t\geq 0\}$ as $n\ra \infty$ for any $\alpha>0$. Here, the resulting dynamics of the limit $\{\ol{N}^{mH}(t), t\geq 0\}$ is determined by the base intensity and the kernel or the self-exciting functions (more appropriately resolvent kernel). Knowing the deterministic convergence from Theorem \ref{thm_LLN_N^{mH}(t)}, we define the diffusion scaled version for marked Hawkes processes, denoted by $\{\widehat{N}^{mH}_{n}(t), t\geq 0\}$. 
We establish the functional central limit theorem in Theorem \ref{thm_FCLT_N^{mH}(t)} under the multi-scaling high-intensity regime by proving the weak convergence of the diffusion scaled process $\{\widehat{N}^{mH}_{n}(t), t\geq 0\}$ to a Gaussian process $\{\widehat{N}^{mH}(t), t\geq 0\}$ as $n\ra \infty$, which is accumulation by the Gaussian noise $\{\mathcal W(t) , t\geq 0\}$ and the diffusion process $\{\widehat{\mathcal{B}}^{mH}(t), t\geq 0\}$. Here the diffusion approximation $\{\widehat{\mathcal{B}}^{mH}(t), t\geq 0\}$ associated with the Gaussian process $\{\mathcal B(t), t\geq 0\}$ (see Proposition \ref{prop_fclt_upsilon_{n}(t)}) results from the functional central limit theorem for cumulative intensity process (see Theorem \ref{thm_FCLT_intensity_process}) inside the high-intensity regime. Recall that, under the multi-scaling asymptotic regime, the range of diffusion scale parameter $\delta$ plays a crucial role in establishing the functional central limit theorem, since, this is entirely determined by the scaling parameter $\alpha$, whereas inside the conventional limiting regime $\delta$ is often considered to be $1/2$.

 The high-intensity regime presented in this article is built on a generalized asymptotic framework applicable to large classes of self-exciting functions. Henceforth, our proposed convergence results are widely applicable to more advanced and general frameworks complementing the existing limiting results. If we restrict ourselves to a special case assuming $\{\eta_{i}(.), i\in \mathbb{N}\}$ i.i.d., our limiting results recover the functional central limit theorem for marked (i.i.d.) Hawkes processes, established in \cite{horst2021functional} (see Remarks \ref{remark_LLN_i.i.d_mark}, and Remark \ref{remark_fclt_i.i.d}). Moreover, if the kernel $\phi(.,.)$ does not depend on the marks, our functional limit theorem can be generalized into multi-dimensional case obtaining the same results presented by \cite{bacry2013some} (see Remarks \ref{remark_LLN_simple Hawkes} and Remark \ref{remark_fclt_simple Hawkes}). Earlier, \cite{gao2018limit} studied the functional central limit theorem for simple Hawkes processes in a Markovian framework, considering an exponential self-exciting function under the assumption that the base (initial) intensity function is of $\mathcal {O}(n)$ and generating the high-intensity regime for large $n$. Recently, \cite{li2022functional} studied the functional central limit theorem for non-stationary marked Hawkes processes under the high intensity asymptotic regime, adapting immigration–birth branching representation. However, the asymptotic regime considered by \cite{li2022functional} is different from our present work, and therefore the limit process is different from our proposed limiting theorems. Finally, we analyze the marked Hawkes process formulated in terms of a multi-scaled   high intensity asymptotic framework.

In addition, we formulate a shot noise process $\{Y^{mH}_{n}(t), t\geq 0\}$ generated from the marked Hawkes measure $N^{mH}_{n}(\diff s, \diff z) $, which is an additive functional process determined by a given kernel function $\theta_{n}(t,z)$ (exponential or non-exponential). Under suitable assumptions, we derive the functional law of large numbers and functional central limit theorem for the shot noise process $\{Y^{mH}_{n}(t), t\geq 0\}$ within the multi-scaling high intensity regime in Section \ref{section_proofs_main_thoerems}. Similar to the limiting processes for the marked Hawkes processes, the functional law of large numbers results in a deterministic process determined by the initial base intensity function and the self-exciting function. While the limiting process resulting from the functional central limit theorem is a sum of the Gaussian noise $\{\mathcal{W}_{\ol{\theta}}(t), t\geq 0\}$ and diffusion process $\{\widehat{\mathcal{B}}^{mH}(t), t\geq 0\}$ associated with the deterministic convergence of the kernel function.

Our main contribution is in establishing the functional limit theorems for the marked Hawkes process along with the intensity process inside the proposed high intensity multi-scaling regime under a non-stationary framework. The main key challenge lies in proving the functional law of large numbers and the functional central limit theorem as the marked sequence in the formulation of marked Hawkes process no longer follows the ideal i.i.d. property under the non-stationary framework. Moreover, the incorporation of time-dependency (non-stationary features) inside the marked sequence, and their distribution, makes the limiting study significantly challenging, taking into account that conventional methodologies to prove the limiting theorems will not hold anymore. The time dependence of both the mark distributions and the excitation mechanism necessitates a refined analysis of the interactions among the arrival dynamics, mark scaling, kernel concentration, and intensity fluctuations.

The exponents $\alpha$ and $\beta$ provide a joint time-mark scaling framework for the marked Hawkes process. The parameter $\alpha (>0)$ determines the temporal acceleration, while $\beta(\geq 0)$ determines the scaling of the marks and the asymptotic region of the mark-dependent kernel. The conventional linear time scaling is recovered by substituting $\alpha=1$. The main contribution is not merely the formal replacement of $n$ by $n^\alpha$ or of $z$ by $n^\beta z$. Rather, the framework allows the event frequency, mark magnitude, kernel concentration, fluid normalization, and diffusion fluctuations to evolve at different polynomial rates under a common asymptotic parameter. 
However, its value alone does not determine which scaling dominates but controls the magnitude of the temporal decay and mark dependence of the kernel.

The methodology we adapted  is relying on the martingale construction of the point process, which is highly inspired by \cite{bremaud1981}. Firstly, we represent the intensity formulation (\ref{def_intensity}) in terms of stochastic integral equation with respect to a Poisson measure defined on an extended probability space, as discussed in Section \ref{section_def_marked Hawkes process}. Under the non-stationary framework, this stochastic integral representation of the intensity process falls in the category of stochastic linear Volterra integral equation with non-convoluted kernel function. Therefore, we need to show the existence of the solution representing the intensity, which is ensured by Becker's form of resolvent (see \cite{burton2005}). The existence of Burton's resolvent kernel is ensured by suitable assumptions on the given kernel function, which provides a tractable representation, which makes the asymptotic analysis comparatively easier. In particular, we adapt the martingale central limit theorem presented in \cite{ethier2009markov} to establish the requisite convergences in the present study. Earlier, \cite{li2022functional} established the functional limit theorem for the marked Hawkes process under a similar framework, which relies on establishing the finite-dimensional convergence and tightness criterion. Nevertheless, \cite{li2022functional} has a relatively complex methodology that differs from that of our current martingale approach. Adapting the proposed methodology has the advantage of being able to establish the limit theorems under stationary frameworks or with i.i.d. mark sequences or by using only simple Hawkes processes without marks.

The rest of this paper is organized as follows. The marked Hawkes process and the marked Hawkes measure  along with the preliminaries are defined in Section \ref{section_def_marked Hawkes process}. In Section \ref{section_main_results_complete} the proposed high-intensity asymptotic regime and the main limit theorems of the present article are introduced. The proofs of the limit theorems are presented in detail in Section \ref{section_proofs_main_thoerems}. The paper is concluded with future work in Section \ref{section_conclusion}.

\subsection{Notation}
Let $(\mathcal  A, d)$ be a metric space with metric $d$, and $\mathscr{B}_{\mathcal A}$ be the Borel sigma algebra on $\mathcal A$. Let $\mathbb{C}\left( \mathcal E, \mathcal  A \right)$ denote the space of all $\mathcal  A$-valued continuous functions defined on $\mathcal E$. Let $\mathbb{C}_{b}\left( \mathcal E, \mathcal  A \right)$ be the subset of bounded functions in $\mathbb{C}\left( \mathcal E,\mathcal  A \right)$, and $\mathbb{C}_{c}\left([0, \infty),\mathcal  A\right)$ be the subspace of $\mathbb{C}\left( \mathcal E,\mathcal  A \right)$ which has compact support. Given a metric space $(\mathcal A, d)$, $\md \left(  \mathcal E, \mathcal  A\right)$ denotes the space of all right-continuous $ \mathcal  A$-valued functions on $\mathcal E$ with left limits (space of all $ \mathcal  A$-valued c\`adl\`ag functions). In literature, the space $\md \left( \mathcal E, \mathcal  A\right)$ is often referred to as \textit{Skorokhod space}. The Skorokhod space $\md \left(\mathcal E, \mathcal  A \right)$ is endowed with the Skorokhod $\jt$ topology, which is induced by the Skorokhod metric.  In particular, when $\mathcal A=\mathbb{R}$ and $\mathcal E=\mathbb{R}$, given $f\in \mathbb{C} \left(\mathcal E, \mathcal  A \right)$, we denote $||f||_{T}=\sup_{0 \leq t \leq T} |f(t)|$, for every $T>0$. Also, if $\mathcal E=\mathbb{R}$, we denote  the norm $||f||_{\mathcal A}=\sup_{x\in \mathcal A}|f(x)|$. For two-parameter function, if $f\in \mathbb{C}(\mathbb{E}\times \mathcal A, \mathbb{E})$, then $||f(.,.)||_{T,\mathcal A}=\sup_{\substack{0\leq t \leq T\\ x\in \mathcal A}}|f(t,x)|$. 
For a sequence $\{X_{n}, n\in \mathbb{N}\}$ of stochastic processes with trajectories in $\md \left(  \mathcal E,\mathcal  A \right)$, 
 we say that $X_{n}$ converges in distribution to $X$ in $\md \left(\mathcal E,\mathcal  A\right)$ embedded with the Skorokhod $\jt$ topology, as $n\ra \infty$. The notion of convergence in distribution is defined in standard way (see \cite{billingsley1999convergence},  \cite{ethier2009markov}).

\section{Marked Hawkes Measure and Marked Hawkes Process}\label{section_def_marked Hawkes process}
Let $(\Omega, \mathscr{F}, \prob)$ be a probability space embedded with the filtration $\mathscr{F}=\{\mathscr{F}(t), t\geq 0\}$. Let $\{\tau_{i}, i\in \mathbb{N} \}$ be a sequence of $\mathscr{F}$-adapted, non-decreasing random times defined on $[0, \infty)$. Define $\{\eta_{i}, i\in \mathbb{N}\}$,
the mark sequence such that 
\begin{equation*}
\prob \left( \eta_{i}(\tau_{i}) \in \mathcal{A}|\tau_{i}=s, \tau_{j}, j\leq i  \right) =\pi_{s}(\mathcal A), \q \mathcal A\in \mathscr{B}_{\mathscr{E}}, \q s\in \mathbb{R}_{+}.
\end{equation*}
Here we assume that the mark $\eta_{i}(.)$  only dependent on the random time $\tau_{i}$ and  that are independent of the remaining time epochs $\tau_{j}$,, for $j<i$.

Let the random sequence $\{\tau_{i}, i\in \mathbb {N}\}$ and $\{\eta_{i}(\tau_{i}), i\in \mathbb{N}\}$ be the $\mathscr{F}$-adapted \textit{marked point measure} $N^{mH}(\diff t, \diff z)$ given by
\begin{equation}\label{def_Hawkes_measure_N}
N^{mH}(\diff t, \diff z):=\sum_{i\in \mathbb{N}} \bm{1}(\tau_{i}\in \diff t, \eta_{i}(\tau_{i}) \in \diff z).
\end{equation}
The marked point measure $N^{mH}(\diff t, \diff z)$ is a transition measure $(\Omega,\mathscr{F} )$ into $(\mathbb{R}_{+}\times \mathcal E, \mathscr{B}_{\mathbb {R}_{+}} \otimes \mathscr{B}_{\mathcal E} )$, which is $\sigma$-finite if and only if the sequence $\{\tau_{i}, i\in \mathbb{N}\}$ is $\mathscr{F}$-non-explosive. 
The point process $\{N^{mH}(t), t\geq 0\}$ embedding the marked point measure $N^{mH}(\diff t, \diff z )$ is said to be \textit{$\mathcal E$-marked point process} if 
\begin{equation*}
N^{mH}(t):=N^{mH}((0,t], \mathcal E), \q t\geq 0.
\end{equation*}

\begin{definition}\label{def_marked_Hawkes_process}
	Let $\lambda^{mH}(.)$ be given by
\begin{equation*}
\lambda^{mH} (t):= \mu^{mH} (t) + \sum_{i=1}^{N^{mH}(t)} \phi\left(t-\tau_{i}, \eta _{i}(\tau_{i})\right) , \quad t\geq 0,
\end{equation*}
	where $\mu^{mH}:\mathbb{R}_{+} \ra \mathbb{R}_{+}$ is a locally integrable deterministic function 
	and the kernel $\phi(.,.): \mathbb{R}_{+} \times \mathcal E \ra \mathbb{R}_{+}$ is a $\mathscr{B}_{\mathbb{R}_{+}} \otimes \mathscr{B}_{\mathcal{E}}$-measurable function. 
 For each $t \geq 0$, if the $\mathcal E$-marked point process $N^{mH}(t)$ admits $\mathscr{F}$-intensity $\lambda ^{mH}(t)$, we say that  
 $\{N^{mH}(t), t\geq 0\}$ is a marked Hawkes process. The function $\mu ^{mH}(.)$ is often referred to as baseline intensity, and the kernel $\phi(.,.)$ is called as the exciting function.  
\end{definition}

\begin{definition}\label{def_marked_Hawkes_measure}
	Let $N^{mH}(\diff t, \diff z)$ be the $\mathcal E$-marked Hawkes measure defined on $(\mathbb{R}_{+}\times  \mathcal E, \mathscr{B}_{\mathbb{R}_{+}} \otimes \mathscr{B}_{\mathcal E})$. Suppose  for each $\mathcal A\in \mathscr{B}_{\mathcal E}$, $\{N^{mH}((0,t], \mathcal A), t\geq 0\}$ admits the $\mathscr{F}$-intensity process $\{\lambda^{mH} (t) \pi_{t}(\mathcal A), t\geq 0\} $, where $\pi_{t}( \mathcal A) $ is a transition measure from $(\Omega \times \mathbb{R}_{+}, \mathscr{F}\otimes \mathscr{B}_{\mathbb{R}_{+}})$ into $(\mathcal E, \mathscr{B}_{\mathcal E})$. Then $N^{mH}(\diff t, \diff z)$ admits the intensity measure $\lambda^{mH} (t-) \pi_{t}(\diff z) \diff t$.
\end{definition}
The intensity process $\{\lambda^{mH} (t), t\geq 0\}$ is assumed to be non-negative, locally integrable and absolutely continuous with respect to the Lebesgue measure. The intensity entity $t\mapsto \lambda^{mH}(t)$ is characterized by the infinitesimal relation 
\begin{equation*}
\prob \left( N^{mH}(t+\diff t)-N^{mH}(t) \neq 0| \mathscr{F}(t)\right) = \lambda^{mH}(t) \diff t
+ o(\diff t).
\end{equation*}

Now, we recall some well known definitions as a remarked of the main Definition \ref{def_marked_Hawkes_measure} as follows.

\medskip{}\noindent\textbf{Point Process:} If the state space $\mathcal{E}$ contains a single point, then the process $\{N^{p}(t), t\geq 0\}$ associated with the point measure 
	\begin{equation*}
	N^{P}((0,t])=\sum_{i\in\mathbb{N}} \bm {1}(\tau_{i} \leq t), \quad  t\geq 0,
	\end{equation*}
	becomes the point process, where $N^{p}(t)=N^{p}((0,t])$, for all $t\geq 0$. 

\medskip{}
\noindent
\textbf{Hawkes Process:} \label{def_Hawkes_point_process}
If the kernel $\phi(t,z)=G(t)$ for some $\mathscr{B}_{\mathbb{R}_{+}}$-measurable function $G:\mathbb{R}_{+}\ra \mathbb{R}_{+}$, for every $z\in \mathcal{E}$, then we characterize the point process $\{N^{H}(t), t\geq 0\}$ as a Hawkes process (instead of marked Hawkes process) with intensity process $\{\lambda^{H}(t), t\geq 0\}$ with 
	\begin{equation*}
	\lambda^{H}(t)=\mu^{H}(t)+ \sum_{i=1}^{N^{H}(t)} G(t-\tau_{i}), \quad t\geq 0.
	\end{equation*}

\medskip{}
\noindent
\textbf{Multivariate Hawkes Process:} If the $\mathcal{E}=\{z_1, z_2, \ldots, z_{K}\}$, then we define
	\begin{equation*}
	N_{j}(t)=\sum_{i\in \mathbb{N}} \bm {1}(\tau_{i}\leq t, \eta_{i} =z_{j}), \q t\geq 0,\q j=1,2,\ldots K,
	\end{equation*}
	and the process $\{(N_1(t), N_2(t), \ldots, N_{K}(t)), t\geq 0\}$ is a  \textit{multi-variate Hawkes process} with intensity process 
	\begin{equation*}
	\lambda_{j}(t)=\mu_{j}(t)+ \sum_{i=1}^{N_{j}(t)} H_{ij}(t-\tau_{i}), \q j=1,2, \ldots, K,
	\end{equation*}
where $\phi(t - \tau_i, z_j)= H_{ij}(t-\tau_i)$.

\subsection{Martingale Representation of Marked Hawkes Process}\label{section_martingale_marked_Hawkes}
Let  $\mathscr{F}^{mH}(t):=\sigma (N^{mH}((0,s], \mathcal A), s\leq t, \mathcal A\in \mathscr{B}_{\mathcal E})$, and let $\{\mathscr{F}^{mH}(t), t\geq 0\}$ be the associated right-continuous filtration. Let $\{\mathscr{F}(t), t\geq 0\}$ be another filtration such that $\mathscr{F}^{mH}(t)\subset \mathscr{F}(t)$, for each $t\geq0$.

Given a $\mathscr{B}_{\mathbb{R}_{+}} \otimes \mathscr{B}_{\mathcal{E}}$-measurable $\mathscr{F}$-predictable $\mathscr{E}$-marked process $\{C(t,z), t\geq 0, z\in \mathcal E\}$, by the stochastic integral representation we get that
\begin{equation}\label{def_int_reprep_N^{mH}}
\int_{0}^{\infty} \int_{\mathcal E} C(s,z) N^{mH}(\diff s, \diff z)= \sum_{i\in \mathbb{N}} C(\tau_{i}, \eta_{i}(\tau_{i})) \bm{1}(\tau_{i} \leq \infty).
\end{equation}
Due to conditional independence property of the marked sequence $\{\eta_{i}(\tau_{i}), i\in \mathbb{N}\}$ by the time sequence $\{\tau_{i}, i\in \mathbb{N}\}$, the intensity measure of the marked Hawkes  measure $N^{mH}(\diff t, \diff z)$, (\ref{def_Hawkes_measure_N}), is given by 
$\lambda^{mH}(t-) \pi_{t}(\diff z) \diff t$, which is equivalent to say that for every $\mathscr{F}$-predictable $\mathcal E$-marked process $C$,
\begin{equation*}
\E \left(\int_0^t \int_{\mathcal {E}} C(s,z) N^{mH}(\diff s, \diff z) \right) = 
\E \left(\int_0^t \int_{\mathcal{E}} C(s,z) \lambda^{mH} (s)  \pi_{s}(\diff z) \diff s\right) .
\end{equation*}
Therefore, the integral representation with respect to marked Hawkes measure $N^{mH}(\diff t, \diff z)$ given by (\ref{def_int_reprep_N^{mH}}) leads to the integral representations 
\begin{equation}\label{def_Hawkes_int_rep}
\lambda^{mH} (t)=\mu^{mH} (t)+ \int_{0}^{t} \int_{\mathcal{E}} 
\phi(t-s, z)  N^{mH}(\diff s, \diff z),
\end{equation}
and its compensated measure is given by
\begin{equation}\label{def_M^{mH} measure}
 M^{mH}(\diff t, \diff z):=N^{mH}(\diff t, \diff z)-\lambda^{mH} (t)\pi_{t}(\diff z)  \diff t.
\end{equation}
The integral process with respect to the compensated marked measure defined by 
\begin{equation*}
\mathcal I^{mH}_{C} (t):=\int_0^t \int_{\mathcal {E}} C(s,z) M^{mH}(\diff s, \diff z),  \q t\geq 0,
\end{equation*}
and the next lemma, associated with  $\mathcal I^{mH}_{C}(t)$ yields the useful martingale characterization in virtue of \cite{bremaud1981}. 
\begin{lemma}\label{lemma_N^{mH}_martingale}
	Let $N^{mH}(\diff t, \diff z)$ be a marked Hawkes measure with $\mathscr{F}$-intensity measure $\lambda^{mH} (t-)\pi_{t}(\diff z)  \diff t$. Given a $\mathscr{B}_{\mathbb{R}_{+}} \otimes \mathscr{B}_{\mathcal{E}}$-measurable $\mathscr{F}$-predictable $\mathcal E$-marked process $\{C(t,z), t\geq 0, z\in \mathcal E\}$, if
	\begin{equation}
	\int_0^t \int_{\mathcal{E}}	|C(s,z)|\lambda^{mH} (s) \pi_{s}(\diff z)  \diff s <\infty, \q \prob-a.s.,
	\end{equation}
	then the integral process $\{\mathcal I^{mH}_{C}(t), t\geq 0\}$ is $\mathscr{F}$-local martingale.
\end{lemma}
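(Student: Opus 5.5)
The plan is the standard Brémaud argument: establish the martingale property for bounded, elementary integrands, extend it by a monotone class argument and monotone convergence to all predictable integrands with integrable compensator, and then localize to turn the almost sure finiteness hypothesis into the local martingale conclusion.

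\emph{Step 1 (elementary integrands).} Take $C(s,z)=H\,\bm 1_{(a,b]}(s)\bm 1_{\mathcal A}(z)$ with $H$ bounded and $\mathscr F(a)$-measurable, and $\mathcal A\in\mathscr B_{\mathcal E}$. By Definition \ref{def_marked_Hawkes_measure}, $N^{mH}((0,t],\mathcal A)$ has $\mathscr F$-intensity $\lambda^{mH}(t)\pi_t(\mathcal A)$. Hence, after stopping at the $k$-th jump time $\tau_k$, the process
\[
N^{mH}((0,t],\mathcal A)-\int_0^t\lambda^{mH}(s)\pi_s(\mathcal A)\diff s
\]
is an $\mathscr F$-martingale. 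Multiplying increments over $(a,b]$ by $H$ preserves the martingale property, so $\mathcal I^{mH}_C(t\wedge\tau_k)$ is a martingale for these $C$.

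\emph{Step 2 (extension to predictable integrands).} Let $\mathcal H$ be the class of bounded $\mathscr F$-predictable $\mathcal E$-marked processes $C$ for which
\[
\E\int_0^{t\wedge\tau_k}\!\int_{\mathcal E} C\,N^{mH}(\diff s,\diff z)=\E\int_0^{t\wedge\tau_k}\!\int_{\mathcal E} C\,\lambda^{mH}(s)\pi_s(\diff z)\diff s
\]
holds, together with its conditional version on $\mathscr F(u)$. The class $\mathcal H$ is a vector space, is closed under bounded monotone limits (by monotone convergence on both sides), and contains the multiplicative class from Step 1. Since that class generates the predictable $\sigma$-field $\mathscr P(\mathscr F)\otimes\mathscr B_{\mathcal E}$, the monotone class theorem gives that $\mathcal H$ contains all bounded predictable $C$. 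For nonnegative $C$ with $\E\int_0^t\int_{\mathcal E} C\lambda^{mH}\pi_s(\diff z)\diff s<\infty$, apply truncation $C\wedge m$ and let $m\to\infty$, using monotone convergence. Splitting a general $C$ as $C=C^+-C^-$, this shows that $\mathcal I^{mH}_C$ is a true martingale whenever $\E\int_0^t\int_{\mathcal E}|C|\lambda^{mH}\pi_s(\diff z)\diff s<\infty$.

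\emph{Step 3 (localization).} Under only the almost sure finiteness assumed in Lemma \ref{lemma_N^{mH}_martingale}, define
\[
S_m:=\inf\Bigl\{t:\int_0^t\int_{\mathcal E}|C(s,z)|\lambda^{mH}(s)\pi_s(\diff z)\diff s\ge m\Bigr\}.
\]
The integral is continuous and adapted in $t$, so $S_m$ is a stopping time. Moreover $S_m\uparrow\infty$ almost surely, because the integral is finite for each $t$ almost surely (and $N^{mH}$ is non-explosive). The process $C\bm 1_{(0,S_m]}$ is predictable and has expected compensator at most $m$. By Step 2, $\mathcal I^{mH}_C(t\wedge S_m)$ is a martingale, so $\mathcal I^{mH}_C$ is an $\mathscr F$-local martingale.

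The main obstacle is Step 2. The intensity measure here involves the time-dependent, possibly random, kernel $\pi_t(\diff z)$, so one must verify carefully that the compensator identity holds on product sets, that is, that the conditional mark law enters predictably. This is exactly what Definition \ref{def_marked_Hawkes_measure} supplies, and once it is in hand the monotone class closure is routine.
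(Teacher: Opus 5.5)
Your proposal is correct and follows essentially the paper's route. The paper gives no independent proof: it takes the lemma directly from Brémaud's integration theorem for point processes with an intensity kernel. Your three steps (elementary integrands, monotone-class extension to $\mathscr{P}(\mathscr{F})\otimes\mathscr{B}_{\mathcal E}$-measurable integrands, localization by the stopping times $S_m$) are exactly the standard proof of that theorem. One point should be made explicit: removing the stopping at $\tau_k$ at the end of Step 2 requires $\tau_k\uparrow\infty$. You can obtain this from the almost sure local integrability of $\lambda^{mH}$, using the same localization you use in Step 3.
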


In order to represent the marked Hawkes measure with respect to an independent Poisson measure, we are going to follow a procedure inspired by  Theorem 7.4 of \cite{ikeda1989stochastic}.

Given the probability space $(\Omega, \mathscr{F}, \mathbb{P})$, there is an extended probability space $(\widetilde{\Omega}, \widetilde{\mathscr{F}}, \widetilde{\prob})$, and a Poisson measure $P(\diff s, \diff z, \diff u)$ defined on $\mathbb{R}_{+} \times \mathcal{E}\times \mathbb{R}_{+}$ with intensity measure $\diff s \ \pi_{s}(\diff z)\diff u$ such that 
\begin{equation*}
N^{mH}((0,t], \mathcal A)=  \int_0^t \int_{\mathcal{A}}  \int_{\mathbb{R}_{+}}
\bm{1}(u\leq \lambda^{mH}(s-))  P(\diff s, \diff z, \diff u), \q \mathcal A\in \mathscr{B}_{\mathcal E}.
\end{equation*}
Therefore, the representation of (\ref{def_Hawkes_int_rep}) leads to 
\begin{equation}\label{rep_lambda_w.r.t_Poisson}
\lambda^{mH}(t)= \mu^{mH}(t)+ \int_0^{t} \int_{\mathcal E} \int_{0}^{\lambda^{mH}(s-)}
\phi(t-s, z) P(\diff s, \diff z, \diff u).
\end{equation}
Next, our aim is to derive a martingale representation of the intensity process $\{\lambda ^{mH}(t), t\geq 0\}$ with respect to compensated measure 
\begin{equation}\label{def_M^{P}(t,z,u)}
M^{P}(\diff t, \diff z, \diff u)=P(\diff t, \diff z, \diff u)-\diff t \ \pi_{t}(\diff z)\diff u.
\end{equation}
Given the kernel $\phi(t,z)$, we introduce the function $\psi(t,s)$ weighted by the marked distribution $\pi_{t}(\diff z)$ defined by 
\begin{equation}\label{def_psi(t,s)}
\psi (t,s):= \int_{\mathcal E} \phi(t-s,z) \pi_{s}(\diff z) .
\end{equation}
Therefore, taking expectation in both side of (\ref{rep_lambda_w.r.t_Poisson}) we obtain 
\begin{equation}\label{def_E(lambda^{mH}(t))}
\E (\lambda^{mH}(t))= \mu^{mH}(t)+ \int_{0}^{t} \psi(t,s) \E (\lambda^{mH}(s)) \diff s,
\end{equation}
which turns out to be the Volterra integral equation of second kind with kernel function $\psi(t,s)$. Since under the hypotheses, the \textit{weighted kernel} $\psi(t,s)$ has non-convolutional structure, the solution of the Volterra integral equation is ensured by Becker's form of resolvent (see Section 7, \cite{burton2005}). On that account, given the weighted kernel function $\psi(t,s)$, we define the \textit{resolvent} equation by virtue of  \cite{burton2005} 
\begin{equation}\label{def_Gamma(t,s)}
\Gamma (t,s)= \psi(t,s) + \int_s^t \psi(t,u)\Gamma (u,s) \diff u, \quad 0\leq s \leq t.
\end{equation}
Using the above analogy, the solution of (\ref{def_E(lambda^{mH}(t))}) can be written as 
\begin{equation}\label{def_expected_intensity_w.r.t gamma}
\E (\lambda ^{mH}(t))= \mu^{mH}(t)+ \int_0^t \Gamma (t,s) \mu^{mH}(s) \diff s,
\end{equation}
which represents the expected valued of intensity $\lambda^{mH}(t)$. Here the resolvent kernel $\Gamma(t,s)$ is defined as follows:
\begin{equation*}
\Gamma (t,s):= \sum_{k=1}^{\infty} \psi^{k}(t,s), \quad \psi^{1}(t,s) := \psi(t,s),
\end{equation*}
where 
\begin{equation*}
 \psi^{k+1}(t,s):=  \int_{s}^{t} \psi(t,v) \psi^{k}(v,s) \diff v, \q \q k\geq 1.
\end{equation*}
The definition of the resolvent kernel $\Gamma(t,s)$ itself yields that 
\begin{equation}\label{psi_subcritical_condition}
    \sup_{s\geq 0} \int_{s}^{\infty} |\psi(t,s)| \diff t < 1,
\end{equation}
which is often referred to be \textit{subcritical condition}. The condition (\ref{psi_subcritical_condition}) essentially maintains the stability for the marked Hawkes measure $N^{mH}(\diff s , \diff z)$ in the long time period. It essentially implies the stability of the resolvent kernel $\Gamma(t,s)$ given by
\begin{equation}\label{gamma_subcritical_condition_first}
    \sup_{s\geq 0} \int_{s}^{\infty} |\Gamma(t,s)| \diff t < \infty.
\end{equation}

\begin{remark}\label{remark_iid of mark sequence}
	In particular, if $\{\eta_{i}(\tau_{i}), i\in \mathbb{N}\}$ is a sequence of i.id marks, independent by the random time epochs $\{\tau_{i}, i\in \mathbb{N}\}$,
	\begin{equation*}
	\prob (\eta_{i}({\tau_{i}}) \in \mathcal A|\tau_{j}, j\in \mathbb{N} )=\pi(\mathcal A), \q i\in \mathbb{N},\q \mathcal A \in \mathscr{B}_{\mathcal E},
	\end{equation*}
    then we have $\psi(t,s)=\psi(t-s)$, for all $0\leq s \leq t$. It follows from (\ref{def_E(lambda^{mH}(t))}) that
    \begin{equation*}
    \E (\lambda^{mH}(t))= \mu^{mH}(t)+ \psi \star \E (\lambda^{mH})(t),
    \end{equation*}
    and hence the solution is given by 
	\begin{equation*}
     \E (\lambda ^{mH}(t))= \mu^{mH}(t)+ \mu^{mH}\star \Gamma (t)
	\end{equation*}
	where the resolvent kernel $\Gamma(.)$ is characterized by the resolvent equation
	\begin{equation*}
	\Gamma (t)= \psi(t) + \psi \star \Gamma(t)=\psi(t)+\Gamma \star \psi(t), \q t \geq 0.
	\end{equation*}
	Therefore, under i.i.d. assumptions, it leads to 
	\begin{equation*}
	||\Gamma||_{\mathbb{L}^{1}}=||\psi||_{\mathbb{L}^{1}}(1+||\Gamma||_{\mathbb{L}^{1}}) ,
	\end{equation*}
	which implies 
	\begin{equation*}
	||\Gamma||_{\mathbb{L}^{1}}=\frac{||\psi||_{\mathbb{L}^{1}}}{1-||\psi||_{\mathbb{L}^{1}}},
	\end{equation*}
	provided $ ||\psi||_{\mathbb{L}^{1}} <1$.
	\end{remark}

In order to derive the limit theorems for the marked Hawkes process $\{N^{mH}(t), t\geq 0\}$ with $\mathscr{F}$-intensity process $\{\lambda ^{mH}(t), t\geq 0\}$, we require the boundedness of the expected entity $\E (\lambda ^{mH}(t))$, $t\geq 0$ stated below in Lemma \ref{lemma_bound of E lambda^{mH}(t)} , which is ensured by the following assumptions.
\begin{assumption}\label{assm_initial_intensity_bound}
	The base intensity function $\mu^{mH}(.) \in \mathbb{L}^{\infty}_{loc}(\mathbb{R}_{+})$, the space of all locally-bounded integrable function on $\mathbb{R}_{+}$.
\end{assumption}
\begin{assumption}\label{assm_kernel_locally_bounded}    
    The kernel function $\phi(t,z) $ is locally-integrable jointly on $(t,z)$ defined on $\mathbb{R}_{+}\times \mathcal E$,
    \begin{equation*}
        \int_{0}^{T}\int_{\mathcal{E}}  |\phi(t,z) | \pi_{t}(\diff z) \diff t <\infty, \; \text{for any} \; T>0,
    \end{equation*}
    i.e. $\phi(.,.)\in \mathbb{L}^{1}_{loc}(\mathbb{R}_{+} \times \mathcal{E}; \diff t \times \pi_{t} (\diff z))$.

    Moreover, we assume that $\phi(.,.)$ satisfies the stronger condition 
		\begin{equation}\label{assm_phi_integral_exist}
		\esp_{z\in \mathcal E} \int_0^{\infty} 
        \left | \phi(t,z) \right| \diff t  <\infty,
		\end{equation}
    i.e., the improper integral (\ref{assm_phi_integral_exist}) exists a.e. for $z\in \mathcal{E}$, and the improper integral is defined by 
    \begin{equation}\label{def_Phi_z_limit_phi}
        \bar{\Phi}(z):=\int_0^{\infty} \phi(t,z) \diff t := \lim_{r\ra \infty} \int_{0}^{r} \phi(t,z) \diff t,
    \end{equation}
    and it is essentially uniformly bounded on $z\in \mathcal{E}$.

    In addition, assume that $\phi(.,z)$ satisfies the uniform tail condition, i.e,  
    \begin{equation}\label{assm_phi_tail_cond}
        \lim_{r\ra \infty} \esp_{z\in \mathcal{E}} \int_{r}^{\infty} |\phi(t,z)| \diff t =0.   
    \end{equation}  
    
\end{assumption}
Here, (\ref{assm_phi_integral_exist}) of Assumption \ref{assm_kernel_locally_bounded} evidently implies that $\psi(.,s) $ and $\Gamma(.,s)$ belong to the class $\mathbb{L}^{1}_{loc}([v, \infty))$ for every $v\geq 0$, and hence the stability conditions (\ref{psi_subcritical_condition}) and (\ref{gamma_subcritical_condition_first}). As the results of the subcritical condition (\ref{psi_subcritical_condition}) and Assumption \ref{assm_kernel_locally_bounded}, one can consider a stronger but simpler sufficient condition 
\begin{equation*}
    \esp_{z\in \mathcal E} \int_0^{\infty} 
        \left | \phi(t,z) \right| \diff t <1.
\end{equation*}

\begin{example}\label{example_phi_nonscaled}
Note that (\ref{assm_phi_integral_exist}) of Assumption  \ref{assm_kernel_locally_bounded} is considered to be a strong condition and it plays a crucial role in establishing that the expected intensity measure is finite. This assumption is substantially different from the assumptions considered in \cite{horst2021functional} due to the ideal i.i.d. feature within the mark sequence $\{\eta_{i}(.), i\in \mathbb{N} \}$.
In many practical settings, the regularity conditions (\ref{assm_phi_integral_exist}), \ref{assm_phi_convergence_z_scaling}) and (\ref{assm_phi_tail_cond}) of Assumption \ref{assm_kernel_locally_bounded} are satisfied by a large class of kernel functions $\phi(.,.)$. We now provide some examples of such $\phi(.,.)$ satisfying Assumption \ref{assm_kernel_locally_bounded}. 
\begin{enumerate}
    \item 
    The dominated class of functions $g\in \mathbb{L}^{1}(\mathbb{R}_{+})$ satisfying $|\phi(t,z)| \leq |g(t)|$, a.e. for $t\geq 0$ and $z\in \mathcal{E}$, e.g., 
    \begin{align*}
        \phi(t,z)= h(z) \exp(-\alpha t) , \; \alpha>0.
    \end{align*}
    with $h\in \mathbb{L}^{\infty}(\mathcal{E})$. 
    \item 
    The separable kernels $\phi(t,z)=h(z) b(t)$ with $ h \in \mathbb{L}^{\infty}(\mathcal{E})$ and $b\in \mathbb{L}^{1}(\mathbb{R}_{+})$, e.g., 
    \begin{align*}
        \phi(t,z)= & h(z)\frac{1}{(1+t)^{\alpha}}, \; \alpha>1,\\
        \phi(t,z)=& h(z) t^{-\beta} \exp(-\alpha t)\bm{1}_{\{t>0\}}, \; 0<\beta<1, \alpha>0.
    \end{align*} 
    \item 
     $\phi(.,.)$ with discontinuous jumps. e.g.,
    \begin{equation*}
        \phi(t,z)= h(z) \bm{1}_{\{0\leq t \leq L\}} , \; L>0.
    \end{equation*}
    \item 
    One can assume that the decay rate function is determined by the mark variable $z$, e.g., 
    \begin{equation*}
        \phi(t,z)= h(z) \exp(-\alpha(z)t), \; \alpha (z)\geq \alpha_0>0, \; \text{a.e}.
    \end{equation*}
\end{enumerate}
\end{example}

Next, we show that the expected intensity measure $ \E \left(\lambda^{mH}(t)\right)$ is finite uniformly on every compact sets, which essentially provides the stability of the intensity estimation over finite time horizon.
\begin{lemma}\label{lemma_bound of E lambda^{mH}(t)}
	Under Assumptions \ref{assm_initial_intensity_bound} and \ref{assm_kernel_locally_bounded}, the expected value of the intensity given by $\E \left(\lambda ^{mH}(.)\right)$ is locally uniformly bounded, i.e., for all $T>0$, 
    \begin{equation*}
        \sup_{0\leq t \leq T} \E \left(\lambda^{mH}(t)\right)  <\infty.
    \end{equation*}
\end{lemma}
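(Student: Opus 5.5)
The plan is to work directly with the renewal-type identity (\ref{def_E(lambda^{mH}(t))}) and use a Gr\"onwall-type argument rather than the full resolvent series. This avoids having to justify convergence of $\sum_k \psi^k$ separately.

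\textbf{Step 1: a bounded majorant for $\psi$.} Fix $T>0$ and set $m_T:=\|\mu^{mH}\|_{T}$, which is finite by Assumption \ref{assm_initial_intensity_bound}. For $0\le s\le t\le T$ we have $|\psi(t,s)|\le \esp_{z\in\mathcal E}|\phi(t-s,z)|$. Integrating over $s$ and using (\ref{assm_phi_integral_exist}) gives
\begin{equation*}
\int_0^t |\psi(t,s)|\diff s \le \esp_{z\in\mathcal E}\int_0^\infty|\phi(v,z)|\diff v=:K<\infty .
\end{equation*}
This bound is uniform in $t$.

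\textbf{Step 2: finiteness of $m(t):=\E(\lambda^{mH}(t))$ via localization.} A priori $m(t)$ might be infinite, so I would localize with the stopping times $\sigma_k:=\inf\{t: N^{mH}(t)\ge k\}$. Let $N_k$ be the process stopped at $\sigma_k$, and let $\lambda_k(t)=\mu^{mH}(t)+\int_0^{t\wedge\sigma_k}\!\int_{\mathcal E}\phi(t-s,z)N^{mH}(\diff s,\diff z)$.

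The compensator relation (Lemma \ref{lemma_N^{mH}_martingale} with the predictable integrand $C(s,z)=\phi(t-s,z)\bm 1(s\le\sigma_k)$, or directly the intensity-measure identity) gives
\begin{equation*}
m_k(t):=\E\lambda_k(t)\le m_T+\int_0^t \psi(t,s)\,\E\big(\lambda^{mH}(s)\bm 1(s\le \sigma_k)\big)\diff s\le m_T+\int_0^t\psi(t,s)\,m_k(s)\diff s,
\end{equation*}
using that $\lambda^{mH}=\lambda_k$ on $\{s\le\sigma_k\}$. To know that $m_k$ is finite, I would bound $\lambda_k(t)\le m_T+k\,\esp_{z}\sup_v|\phi(v,z)|$. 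If $\phi$ is not bounded, I would instead truncate $\phi$ at level $L$ and let $L\uparrow\infty$ by monotone convergence at the end. This is the main technical obstacle: the a priori finiteness must be secured before Gr\"onwall can be applied.

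\textbf{Step 3: Gr\"onwall with a non-convolution kernel.} Let $M_k:=\sup_{t\le T}m_k(t)<\infty$. The inequality $M_k\le m_T+K M_k$ closes immediately only when $K<1$, which is the simpler sufficient condition noted after Assumption \ref{assm_kernel_locally_bounded}.

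In general I would use the uniform tail condition (\ref{assm_phi_tail_cond}) together with local integrability to pick $h>0$ small enough that
\begin{equation*}
\sup_{t}\int_{(t-h)\vee 0}^{t}|\psi(t,s)|\diff s\le \esp_z\int_0^h|\phi(v,z)|\diff v\le \tfrac12 .
\end{equation*}
This requires uniform absolute continuity of $\int_0^h|\phi(\cdot,z)|$ in $z$, and I would argue it from (\ref{assm_phi_integral_exist}) together with (\ref{assm_phi_tail_cond}). Then I would march over the intervals $[jh,(j+1)h]$: with $M_k^{(j)}:=\sup_{t\le (j+1)h}m_k(t)$, we get
\begin{equation*}
M_k^{(j)}\le m_T+K M_k^{(j-1)}+\tfrac12 M_k^{(j)},
\end{equation*}
so $M_k^{(j)}\le 2(m_T+KM_k^{(j-1)})$. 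After $\lceil T/h\rceil$ steps this yields a bound on $\sup_{t\le T}m_k(t)$ independent of $k$.

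\textbf{Step 4: removing the localization.} Non-explosion (local integrability of $\lambda^{mH}$) gives $\sigma_k\uparrow\infty$, and $\lambda_k(t)\uparrow\lambda^{mH}(t)$ since $\phi\ge0$. Monotone convergence then transfers the bound to $\sup_{t\le T}\E(\lambda^{mH}(t))$. Alternatively, once finiteness is known, the resolvent formula (\ref{def_expected_intensity_w.r.t gamma}) together with (\ref{gamma_subcritical_condition_first}) gives the explicit bound $m_T(1+\sup_s\int_s^\infty|\Gamma(t,s)|\diff t)$.
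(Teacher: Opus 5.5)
\textbf{The gap is in Step 1, and everything after it depends on that step.} From $|\psi(t,s)|\le \esp_{z\in\mathcal E}|\phi(t-s,z)|$ you get $\int_0^t|\psi(t,s)|\diff s\le\int_0^t\esp_{z\in\mathcal E}|\phi(v,z)|\diff v$. But this quantity is \emph{at least} $\esp_{z\in\mathcal E}\int_0^t|\phi(v,z)|\diff v$, not at most: you cannot pull an essential supremum out of a $\diff v$-integral. The reason is structural. In the row integral over $s$ the mark law $\pi_s$ changes with $s$, so different parts of the row can be fed by different marks. What (\ref{assm_phi_integral_exist}) controls is the column integral,
\begin{equation*}
\int_s^\infty|\psi(t,s)|\diff t\le\int_{\mathcal E}\int_0^\infty|\phi(v,z)|\diff v\,\pi_s(\diff z)\le K,
\end{equation*}
which is the condition (\ref{psi_subcritical_condition}). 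A bound on $\sup_{t\le T}\E(\lambda^{mH}(t))$, however, needs row integrals.

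The same interchange error recurs in three other places:
\begin{itemize}
\item your short-window bound in Step 3 (and the claimed uniform absolute continuity of $h\mapsto\int_0^h|\phi(v,z)|\diff v$ does not follow from (\ref{assm_phi_integral_exist}) and (\ref{assm_phi_tail_cond}));
\item your claim that $K<1$ closes the estimate;
\item your alternative at the end of Step 4, where $\int_0^t|\Gamma(t,s)|\diff s$ is a row integral while (\ref{gamma_subcritical_condition_first}) is only a column bound. The paper's proof explicitly flags this distinction.
\end{itemize}

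\textbf{No argument can succeed from Assumptions \ref{assm_initial_intensity_bound} and \ref{assm_kernel_locally_bounded} alone.} Take $\mathcal E=[1,\infty)$, $\mu^{mH}\equiv 1$, and $\phi(t,z)=cz\,\bm{1}(0\le t\le 1/z)$ with $0<c<1$. Let $\pi_s=\delta_{z(s)}$, where $z(s)=(2-s)^{-1}$ for $1<s<2$ and $z(s)=1$ otherwise. Then:
\begin{itemize}
\item $\int_0^\infty\phi(v,z)\diff v=c$ for every $z$;
\item the tail in (\ref{assm_phi_tail_cond}) vanishes for $r\ge 1$;
\item the local integrability condition holds, since the integral equals $c$;
\item (\ref{psi_subcritical_condition}) holds with constant $c$;
\item $\esp_{z}\int_0^h\phi(v,z)\diff v=c$ for every $h>0$, so there is no uniform absolute continuity.
\end{itemize}
Yet for $1<s\le t<2$ one has $\psi(t,s)=c/(2-s)$. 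Using $\E(\lambda^{mH})\ge\mu^{mH}=1$ in (\ref{def_E(lambda^{mH}(t))}) gives $\E(\lambda^{mH}(t))\ge 1+c\log\frac{1}{2-t}$, which tends to $\infty$ as $t\uparrow 2$. This is why the paper's proof imposes the additional row condition $\rho_T=\sup_{t\le T}\int_0^t|\psi(t,s)|\diff s<1$. It then obtains $\sup_{t\le T}\int_0^t|\Gamma(t,s)|\diff s\le\rho_T/(1-\rho_T)$ from the resolvent equation.

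\textbf{Your localization-plus-marching scheme does work once Step 1 is replaced by an explicit row hypothesis.} It is also more careful than the paper about a priori finiteness. A sufficient hypothesis is $\rho_T<\infty$ together with $\sup_{t\le T}\int_{(t-h)\vee 0}^t|\psi(t,s)|\diff s\le\frac12$ for some $h>0$. This holds, for example, for a dominated kernel $|\phi(t,z)|\le g(t)$ with $g\in\mathbb{L}^1_{loc}$. Such a hypothesis must be assumed; it cannot be derived from (\ref{assm_phi_integral_exist}) and (\ref{assm_phi_tail_cond}).
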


Note that the compensated measure $M^{P}(\diff t, \diff z, \diff u)$ defined by (\ref{def_M^{P}(t,z,u)}) is a martingale measure by virtue of Theorem 1.6.3 of Liptser and Shiryaev. Using the definition of $M^{P}(\diff t, \diff z, \diff u)$ by (\ref{def_M^{P}(t,z,u)}), the representation (\ref{rep_lambda_w.r.t_Poisson}) of $\lambda^{mH}(t)$ yields 
\begin{align*}
\lambda ^{mH}(t)=&\mu^{mH}(t)+ \int_0^t \int_{\mathcal E} \int_0^{\lambda^{mH}(s-)} \phi(t-s,z) M^{P}(\diff s, \diff z, \diff u)\\ & + \int_0^t \int_{\mathcal E} \int_0^{\lambda^{mH}(s-)} \phi(t-s,z)  \diff s \pi_{s}(\diff z)\diff u\\
=&\mu^{mH}(t)+ \int_0^t \int_{\mathcal E} \int_0^{\lambda^{mH}(s-)} \phi(t-s,z) M^{P}(\diff s, \diff z, \diff u)+ \int_0^t  \psi(t,s) \lambda^{mH}(s) \diff s. 
\end{align*}
Applying the similar analogy with respect to resolvent kernel $\Gamma(t,s)$ by (\ref{def_Gamma(t,s)}), we obtain 
\begin{align*}
\lambda^{mH}(t)=&\mu^{mH}(t)+ \int_0^t \Gamma (t,s) \mu^{mH}(s) \diff s 
+\int_0^{t} \int_{\mathcal{E}} \int_{0}^{\lambda^{mH}(s-)}
\phi(t-s, z) M^{P}(\diff s, \diff z, \diff u)\\
&\qquad \qquad + \int_0^t \Gamma (t,s) \int_0^s \int_{\mathcal{E}} \int_{0}^{\lambda^{mH}(v-)} \phi (s-v, z)
M^{P}(\diff v, \diff z, \diff u)  \diff s.
\end{align*}
By Fubini's Theorem, we rewrite the second and third terms and we have that
\begin{align}\label{rep_lambda_martingale}
\lambda^{mH}(t)=&\mu^{mH}(t)+ \int_0^t \Gamma (t,s) \mu^{mH}(s) \diff s +
 \int_0^t \int_{\mathcal E} \int_0^{\lambda ^{mH}(v-)} \phi(t-v,z) M^{P}(\diff v, \diff z, \diff u)  \notag \\
 &+ \int_0^t \int_{\mathcal E}\int_0^{\lambda^{mH}(v-)} M^{P}(\diff v, \diff z, \diff u) \int_{v}^{t} \Gamma(t,s) \phi(s-v,z) \diff s.
\end{align}  
Define 
\begin{equation*}
\mathcal{L}_{t}(v,z)=   \phi(t-v,z)+ \int_v^t \Gamma(t,s) \phi(s-v,z)\diff s , \quad t\geq 0.
\end{equation*}

\begin{proposition}\label{prop_intensity_martingale}
	Under Assumptions (\ref{assm_initial_intensity_bound}) and (\ref{assm_kernel_locally_bounded}), the intensity process $\{\lambda ^{mH}(t), t \geq0\}$ is the unique solution of the linear stochastic Volterra equation
	\begin{equation}\label{def_lambda^{mH}(t)_w.r.t martingale}
	\lambda^{mH}(t)=\mu^{mH}(t)+ \int_0^t \Gamma (t,s) \mu^{mH}(s) \diff s +
	\int_0^t \int_{\mathcal E} \int_0^{\lambda ^{mH}(s-)} \mathcal L_{t}(s,z) M^{P}(\diff s, \diff z, \diff u)
	\end{equation}
	where the compensated martingale measure $M^{P}(\diff s, \diff z, \diff u)$ is given by (\ref{def_M^{P}(t,z,u)}). 
\end{proposition}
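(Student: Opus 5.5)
We prove two things: that the intensity satisfies (\ref{def_lambda^{mH}(t)_w.r.t martingale}), and that it is the only solution.

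\emph{Step 1: $\lambda^{mH}$ satisfies the equation.} Start from the Poisson representation (\ref{rep_lambda_w.r.t_Poisson}) and split $P=M^{P}+\diff s\,\pi_s(\diff z)\diff u$. Integrating out $u$ over $[0,\lambda^{mH}(s-)]$ and $z$ against $\pi_s$ gives the pathwise linear Volterra equation
\begin{equation*}
\lambda^{mH}(t)=\mu^{mH}(t)+\xi(t)+\int_0^t\psi(t,s)\lambda^{mH}(s)\diff s,
\end{equation*}
where $\xi(t):=\int_0^t\int_{\mathcal E}\int_0^{\lambda^{mH}(s-)}\phi(t-s,z)M^P(\diff s,\diff z,\diff u)$. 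The forcing term $\xi$ is well defined by Lemma \ref{lemma_N^{mH}_martingale}. The required integrability holds because
\begin{equation*}
\E\int_0^t\int_{\mathcal E}|\phi(t-s,z)|\lambda^{mH}(s)\pi_s(\diff z)\diff s\le \sup_{s\le t}\E\lambda^{mH}(s)\int_0^t\int_{\mathcal E}|\phi(t-s,z)|\pi_s(\diff z)\diff s<\infty,
\end{equation*}
using Lemma \ref{lemma_bound of E lambda^{mH}(t)} and Assumption \ref{assm_kernel_locally_bounded}.

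Next, apply the deterministic resolvent identity (\ref{def_Gamma(t,s)}) pathwise. For a locally integrable forcing $f$, the equation $x=f+\int\psi x$ has the solution $x=f+\int_0^t\Gamma(t,s)f(s)\diff s$. Justify this by the Neumann series $\sum_k\psi^k$, which converges in $\mathbb L^1_{loc}$ by (\ref{psi_subcritical_condition}) and (\ref{gamma_subcritical_condition_first}). Taking $f=\mu^{mH}+\xi$ yields exactly the displayed formula preceding (\ref{rep_lambda_martingale}). The only remaining task is to exchange $\int_v^t\diff s$ with the stochastic integral in $\int_0^t\Gamma(t,s)\xi(s)\diff s$.

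\emph{Step 2: the stochastic Fubini step (main obstacle).} Write
\begin{equation*}
M^P=P-\diff s\,\pi_s(\diff z)\diff u .
\end{equation*}
For each of the two pieces the integrand $|\Gamma(t,s)\phi(s-v,z)|\bm 1(u\le\lambda^{mH}(v-))$ is nonnegative after taking absolute values. Its expectation, computed against the compensator, is bounded by
\begin{equation*}
\sup_{v\le t}\E\lambda^{mH}(v)\cdot \esp_z\int_0^\infty|\phi(r,z)|\diff r\cdot\Big(1+\int_0^t|\Gamma(t,s)|\diff s\Big)<\infty .
\end{equation*}
Here we use (\ref{assm_phi_integral_exist}) together with the local integrability of $\Gamma(t,\cdot)$; if the latter is not given directly, I would derive it from the series bound on $[0,T]$. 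The bound shows that both the $P$-integral and the compensator integral are a.s. absolutely convergent. Hence the classical Fubini--Tonelli theorem applies to each piece separately, and the exchange is legitimate. Combining the terms gives the kernel $\mathcal L_t(s,z)$ and hence (\ref{def_lambda^{mH}(t)_w.r.t martingale}). This is the delicate point, because the integrand depends on $t$ and $\Gamma$ is non-convolutional; splitting $M^P$ into its two parts is what circumvents a genuine stochastic Fubini theorem.

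\emph{Step 3: uniqueness.} Suppose $\lambda_1,\lambda_2$ are two nonnegative predictable, locally integrable solutions with locally bounded means. Reversing Step 1–2 (apply the same Fubini argument and the resolvent identity backwards), each solves the Poisson-driven equation (\ref{rep_lambda_w.r.t_Poisson}). Put $D(t)=\E|\lambda_1(t)-\lambda_2(t)|$. Subtracting the two equations, the integrands differ only on the strip in $u$ between $\lambda_1(s-)$ and $\lambda_2(s-)$. The $P$-integral of a nonnegative integrand has expectation equal to its compensator, so
\begin{equation*}
D(t)\le\int_0^t\psi(t,s)D(s)\diff s .
\end{equation*}
Iterating this inequality gives $D(t)\le \int_0^t\psi^{k}(t,s)D(s)\diff s$ for every $k$. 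By (\ref{psi_subcritical_condition}) the terms $\psi^k$ are summable and so $\psi^k\to0$ in $\mathbb L^1$, while $D$ is locally bounded. Hence $D\equiv0$ on $[0,T]$ for every $T$, so $\lambda_1=\lambda_2$ a.e., and the two are indistinguishable after taking left limits. Existence of $\lambda^{mH}$ itself is given by the construction via $P$ (a thinning argument), so the intensity is the unique solution.
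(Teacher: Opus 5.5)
Your Steps 1--2 are the same derivation the paper gives just before the statement: split $P$ into $M^{P}$ plus its compensator, invert the resulting Volterra equation with $\Gamma$, and interchange the integrals to produce $\mathcal L_t$. You justify the interchange more carefully than the paper, and your Step 3 correctly supplies the uniqueness claim, which the paper asserts but never proves.

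One small repair is needed. When $\pi_v$ varies with $v$, neither the factor $\esp_{z\in\mathcal E}\int_0^\infty|\phi(r,z)|\diff r$ nor your appeal to Assumption \ref{assm_kernel_locally_bounded} in Step 1 bounds $\int_0^s\int_{\mathcal E}|\phi(s-v,z)|\pi_v(\diff z)\diff v$. You do not need either: taking expectations in (\ref{rep_lambda_w.r.t_Poisson}) gives $\E\int_0^s\int_{\mathcal E}\phi(s-v,z)\lambda^{mH}(v)\pi_v(\diff z)\diff v=\E\lambda^{mH}(s)-\mu^{mH}(s)$, which Lemma \ref{lemma_bound of E lambda^{mH}(t)} controls. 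The only other ingredient is the row bound (\ref{gamma_subcritical_condition}) on $\Gamma$.
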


\noindent 
Given the kernel $\phi(.,.)$ on $\mathbb{R}_{+}\times \mathcal E$ and $\Gamma(.,.)$ on $\mathbb{R}_{+}\times \mathbb{R}_{+}$, 
we introduce
\begin{equation}\label{def_tau_t(v,z)_function}
\mathcal T_{t}(v,z)= \int_v^{t} \phi(w-v,z) \bigg( 1 +\int_{w}^{t}\Gamma (s,w)  \diff s
\bigg) \diff w  .
\end{equation}

\subsection{Shot noise process:}
   In addition to the counting dynamics of the marked Hawkes process, it is often useful to study cumulative functionals generated by its past events. Motivated by this, we define the additive functional process $\{Y^{mH}(t), t\geq 0\}$ in $\md ([0, \infty), \mathbb{R})$ driven by the marked Hawkes measure $N^{mH}(\diff s, \diff z)$, where 
   \begin{equation}\label{def_Y^{mH}}
   Y^{mH}(t)=\sum_{i=1}^{N^{mH}(t)} \theta (t-\tau_{i}, \eta_{i}(\tau_{i})), \q t\geq 0,
   \end{equation}
   which leads to the following integral representation
    \begin{equation}\label{def_Y^{mH}_int}
    Y^{mH}(t)=\int_0^t \int_{\mathcal{E}} \theta \left( t-s,z\right) N^{mH}(\diff s, \diff z).
   \end{equation} 
   Often, in the literature, we refer the additive process $\{Y^{mH}(t), t\geq 0\}$ in $\md ([0, \infty), \mathbb{R})$ as the \textit{shot noise process}. The random dynamics of $Y^{mH}$ determined by $N^{mH}$ provides the insight into the the random evolution of the intensity behaviour in $\md ([0, \infty), \mathbb{R})$.

\section{Multi-scaling High-Intensity Regime and Limit Theorems}\label{section_main_results_complete}
This section studies the functional limit theorems for marked Hawkes processes under a suitable asymptotic regime, which includes functional law of large numbers and functional central limit theorem. In the beginning, we define the scaled marked Hawkes process along scaled the intensity process under multi-scaling high-intensity asymptotic regime framework introducing a pair of scaling parameters $\alpha$ and $\beta$ for some $\alpha>0$ and $\beta \geq 0$. In addition, we define the additive functional process associated with the marked Hawkes measure. Next, we state the main results of the present article in the subsequent Sections \ref{section_FLLN_theorems} and \ref{section_FCLT_theorems}. In particular, the deterministic approximations in virtue of functional law of large numbers of the scaled processes is stated in Section \ref{section_FLLN_theorems}. In the following Section \ref{section_FCLT_theorems}, we discuss the weak convergences of the centered and normalized scaled processes in Skorokhod Space.

Consider a sequence of a marked Hawkes point measure $N^{mH}_{n}(\diff t, \mathcal{A})$ indexed by the subscript $n\in \mathbb{N}$, determined by the random sequence $\{\tau_{i,n}, i\in \mathbb{N}\}$ and $\eta_{i,n}(\tau_{i,n}), i\in \mathbb{N}$, defined in view of (\ref{def_Hawkes_measure_N}) as 
\begin{equation*}
N^{mH}_{n}((0,t],\mathcal{A}):=\sum_{i\in \mathbb{N}}\bm {1}(\tau_{i,n} \leq t, \eta_{i,n}(\tau_{i,n}) \in \mathcal{A}), \q \mathcal{A} \in \mathscr{B}_{\mathcal {E}},
\end{equation*}
where the distribution of the marked sequence is given by 
\begin{equation*}
\prob (\eta_{i,n} (\tau_{i,n})\in \mathcal A|\tau_{i,n}=t, \tau_{j,n}, j\leq i)=\pi_{t}^{n} (\mathcal A ) , \q \mathcal{A}\in \mathscr{B}_{\mathcal E}.
\end{equation*}

Given $\mathcal A\in \mathscr{B}_{\mathcal E}$, we define the scaled marked Hawkes process $\{N^{mH}_{n}((0,t], \mathcal {A}), t\geq 0\}$ as
\begin{equation}\label{def_N^{mH}_scaled}
N^{mH}_{n}((0,t],\mathcal{A}):= N^{mH}((0,n^{\alpha}t], n^{\beta} \mathcal{A} ).
\end{equation}
and the associated  marked Hawkes measure $N_{n}^{mH}(\diff t , \diff z)$ is defined by
\begin{equation}\label{def_N^{mH}_measure scaled}
N^{mH}_{n}(\diff s, \diff z):=N^{mH} (n^{\alpha} \diff s, n^{\beta} \diff z), \q \xt{for some} \q \alpha> 0,\q  \beta \geq 0.
\end{equation}
Define the sigma algebra $\mathscr{F}_{n}^{mH}(t):=\sigma (N^{mH}_{n}((0,s], \mathcal A), s\leq t, \mathcal A\in \mathscr{B}_{\mathcal E})$ and the associated right-continuous filtration $\mathscr{F}_{n}^{mH}:=\{\mathscr{F}_{n}^{mH}(t), t\geq 0\}$. Consider the bigger sigma algebra $\mathscr{F}_{n}^{mH}(t) \subset \mathscr{F}_{n}(t)$, $t\geq 0$ and the filtration $\mathscr{F}_{n}:=\{\mathscr{F}_{n}(t), t\geq 0\}$ such that $\{\lambda^{mH}_{n}(t), t\geq 0\}$ is $\mathscr{F}_{n}$-adapted. 
The definitions (\ref{def_N^{mH}_scaled}) and (\ref{def_N^{mH}_measure scaled}) itself yield that the compensated measure $N^{mH}_{n}(\diff s, \diff z) -\lambda^{mH}(n^{\alpha}s-) n^{\alpha}\diff s \pi_{n^{\alpha}s}(n^{\beta} \diff z) $ is $\mathscr{F}_{n}$- martingale measure and $\lambda^{mH}(n^{\alpha} s-) n^{\alpha}\diff s \pi_{n^{\alpha}s}(n^{\beta} \diff z) $ is the associated $\mathscr{F}_{n}$-predictable intensity measure. Therefore, the scaling of the intensity process and the marked distribution are defined as follows:
\begin{equation}\label{def_lambda_pi_scaled_versions}
    \lambda^{mH}_{n}(t):= \lambda^{mH}(n^{\alpha}t), \quad 
    \pi_{t}^{n}(\mathcal {A}):= \pi_{n^{\alpha}t}(n^{\beta}\mathcal{A} ), \quad t\geq 0, \quad \mathcal{A}\in \mathscr{B}_{\mathcal{E}}.
\end{equation}

For the mark space $\mathcal{E}$, we assume that $\mathcal{E}$ is invariant under the scaling, i.e., 
\begin{equation}\label{invariant_property_mark_space}
    n^{\beta}\mathcal{E}= \mathcal{E}.
\end{equation}
The invariance property of (\ref{invariant_property_mark_space}) guarantees that the scaled subset $n^{\beta}\mathcal{A}$ of $\mathcal{E}$ still gives a subset of the mark space $\mathcal{E}$. It also ensures that $\pi_{s}^{n}(\diff z)$ remains a probability measure on $\mathcal{E}$, as
\begin{equation*}
    \int_{\mathcal{E}} \pi_{t}^{n} (\diff z) = \int_{\mathcal{E}} \pi_{n^{\alpha }t} (n^{\beta} \diff z) = \int_{n^{\beta}\mathcal{E}} \pi_{n^{\alpha }t } (\diff z)=\pi_{n^{\alpha }t} (n^{\beta}\mathcal {E})= 1, \; t\geq 0.
\end{equation*}

For each $n\in \mathbb{N}$, recalling the representation (\ref{rep_lambda_w.r.t_Poisson}) the formulation of the scaled intensity process $\{\lambda^{mH}_{n}(t), t\geq 0\}$ associated with marked Hawkes process $\{N^{mH}_{n}(t), t\geq 0 \}$ and the base intensity $\{\mu_n^{mH}(t), \geq 0\}$ is defined by 
\begin{equation}\label{def_intensity_scaled}
\lambda^{mH}_{n}(t)= \mu^{mH}_{n}(t)+ \int_{0}^t \int_{\mathcal{E}} \int_{0}^{\lambda^{mH}_{n}(s-)} \phi_{n} (t-s, z) P_{n}(\diff s, \diff z, \diff u),
\end{equation}
where $\phi_n(t-s,z) := \phi (n^{\alpha}(t-s), n^{\beta}z)$, and $P_{n}(\diff s, \diff z, \diff u) $ is an independent Poisson measure with intensity measure $n^{\alpha} \diff s \pi_{n^{\alpha}s}(n^{\beta}\diff  z) \diff u$. 
As a consequence of definition (\ref{def_N^{mH}_measure scaled}), the Poisson random measure $P_{n}(\diff s, \diff z, \diff u) $ is scaled in the following manner: 
\begin{equation}\label{def_poisson_measure_scaled}
P_{n}(\diff s, \diff z, \diff u):= P (n ^{\alpha} \diff s, n^{\beta}\diff z, \diff u)
\end{equation}
such that the compensated Poisson measure $P_{n}(\diff s, \diff z, \diff u)-n^{\alpha} \diff s \pi^{n}_{s}(\diff z) \diff u$ is a martingale measure.

Based on the scaling (\ref{def_intensity_scaled}), it follows from (\ref{def_E(lambda^{mH}(t))}) that
the expected value $\E( \lambda_{n}^{mH}(t))$ has the following representation 
 \begin{align*}
     \mathbb{E} (\lambda^{mH}_{n}(t)) = &\mu_{n}^{mH}(t)+
     \int_0^t \int_{\mathcal{E}} n^{\alpha} \phi_{n} (t-s,z)\mathbb{E} (\lambda^{mH}_{n}(s)) \pi_{s}^{n}(\diff z) \diff s \\
     =&\mu_{n}^{mH}(t)+
     \int_0^t \int_{n^{\beta}\mathcal{E}} n^{\alpha} \phi (n^{\alpha}t-n^{\alpha}s,z)\E(\lambda^{mH}_{n}(s)) \pi_{n^{\alpha}s}(\diff z) \diff s \\
    =&  \mu_{n}^{mH}(t)+
     \int_0^t  n^{\alpha} \psi(n^{\alpha}t,n^{\alpha}s)\E(\lambda^{mH}_{n}(s)) \diff s .
 \end{align*}
 It leads to the following scaled kernel $\phi_{n}(t,s)$, weighted kernel $\psi_{n}(t,s)$ and resolvent kernel $\Gamma_{n}(t,s)$ defined as
 \begin{align} 
 &\ol{\phi}_{n}(t,z) := n^{\alpha} \phi_{n}(t,z)= n^{\alpha} \phi (n^{\alpha}t, n^{\beta} z), \quad t\geq 0, \quad z \in \mathcal{E}, \notag \\
 & \ol{\psi}_{n}(t,s) := n^{\alpha}\psi (n^{\alpha} t, n^{\alpha}s) , \quad \ol{\Gamma}_{n}(t,s):= n^{\alpha}\Gamma(n^{\alpha}t, n^{\alpha} s), \quad 0 \leq s \leq t. \label{def_scaled_phi_psi_gamma_n}
 \end{align}
 Therefore, in view of (\ref{def_expected_intensity_w.r.t gamma}) we obtain
 \begin{equation}\label{def_E(lambda_n(t))}
     \mathbb{E} \left( \lambda^{mH}_{n}(t) \right) =
     \mu^{mH}_{n}(t) + \int_0^t \ol{\Gamma}_{n}(t,s) \mu^{mH}_{n}(s) \diff s.
 \end{equation}
 \begin{remark}
    
     The scaling factors $\alpha$ and $\beta$ introduced in the present study demonstrate that at which magnitude the temporal dynamics (\ref{def_N^{mH}_scaled}) of the marked Hawkes process $\{N^{mH}_{n}(t), t\geq 0\}$ evolves simultaneously with respect to time and mark in the asymptotic regime, rather than at the conventional rate $n$. In particular, $\alpha$ controls how rapidly the original dynamics evolves as $n\ra \infty$ depending on $\alpha<1,=1,>1$, which means either system time horizon increases slowly compared to rate $n$ (\textit{sublinear}) or grows linearly at rate $n$ (\textit{linear}), or accelerates much faster than rate $n$ (\textit{superlinear}). As a result, incorporation of $lapha$ originates the multi-scaling high-intensity asymptotic regime, at which the base intensity $\mu^{mH}(.)$ and the intensity $\lambda^{mH}_{n}(.)$ increases at rate $n^{\alpha}$, and hence the cumulative intensity $\int_0^{n^{\alpha}t} \lambda^{mH}(s)\diff s = \mathcal{O}(n^{\alpha})$. Moreover, the excitation effect on the intensity (\ref{def_intensity_scaled}) determined by the kernel $\phi_n(t,z)$ asymptotically concentrates towards the origin at rate $n^{\alpha}$ either slowly $(\alpha<1)$, or linearly $(\alpha=1)$, or faster $(\alpha>1)$. The important feature of this analysis we have discussed in the following Section \ref{section_FLLN_theorems} that the cumulative scaled kernel converges, away from the origin, to its total mass in Lemma \ref{lemma_conv_phi_wrt_measure}, and the limiting kernel $\Phi(z)$ is therefore not an ordinary spread-out function; it exhibits a Dirac-type mass concentration at zero elapsed time, and so does the weighted kernel $\psi(s,v)$ and resolvent kernel $\Gamma_n(s,v)$.
     
 \end{remark}
 \begin{remark}
     The mark scaling parameter $\beta (\geq 0)$ plays a substantial role to control that excitation of $\{N^{mH}_{n}(t), t\geq 0\}$ and the marks distribution $\pi^{n}_{t}(\mathcal{A})$ and their influence on the intensity measure via the kernel $\phi_n(t,z)$. The invariance property (\ref{invariant_property_mark_space}) ensures that the mark transformation does not change the underlying mark space. The very common class satisfying (\ref{invariant_property_mark_space}) are the Euclidean space $\mathbb{R}^d$, or $\mathbb{R}^{d}_{+}$, $d\geq 1$. For any bounded domain, e.g., $[a,b]^{d}$, $0\leq a < b \leq 1$, it is preferable to consider $\beta=0$, otherwise \ref{invariant_property_mark_space}) may not hold. However, $\mathcal{E}$ is $\beta$-invariant, still it may strongly influence the 
     effective excitation created by an event, and therefore the stability and resolvent behavior of the Hawkes process.
     
 \end{remark}

\subsection{Shot noise process under multi-scaling regime}
Next, we consider the scaled shot noise process $\{Y^{mH}_{n}(t), t\geq 0\}$ given by 
\begin{equation*}
Y_{n}^{mH}(t):=\sum_{i=1}^{N^{mH}_{n}(t)} \theta (t-\tau_{i,n}, \eta_{i,n}), \q t\geq 0.
\end{equation*}
It follows from definitions (\ref{def_Y^{mH}_int}) and (\ref{def_N^{mH}_scaled}) that the stochastic integral representation of $Y^{mH}_{n}(t)$ is given by 
\begin{equation}\label{def_Y^{mH}_{n}(t)}
Y^{mH}_{n} (t):=\int_0^{t} \int_{\mathcal E} \theta_{n}(t-s,z) 
 N^{mH}(n^{\alpha} \diff s,  n^{\beta}\diff z) 
=\int_0^{t} \int_{\mathcal E} \theta_{n}(t-s,z) N^{mH}_{n}(\diff s, \diff z),
\end{equation}
where $\theta_{n}(t-s,z) := \theta(n^{\alpha}t-n^{\alpha} s, n^{\beta} z)$.

\subsection{Functional Law of Large Numbers}\label{section_FLLN_theorems}
For every $\mathscr{B}_{\mathcal E}$-measurable real-valued function $f: \mathcal E\ra \mathbb{R}$, we define
\begin{equation*}
N_{n,t}^{mH}(f)=\int_0^t \int_{\mathcal E} f(z) N^{mH}_{n}(\diff s, \diff z)=\int_0^t \int_{\mathcal E} f(z) N^{mH}(n^{\alpha} \diff s,  n^{\beta} \diff z).
\end{equation*}
In this section, we define the rescaled processes as follows:
\begin{equation}\label{def_fluid_scaled_N_{n}(t), Y_{n}(t)}
\ol{N}^{mH}_{n}(t):=\frac{1}{n^{\alpha}} N^{mH}_{n}(t), \q 
\ol{Y}^{mH}_{n}(t):=\frac{1}{n^{\alpha}} Y^{mH}_{n}(t);
\end{equation}
In order to prove the functional laws of large numbers, we assume the first order deterministic convergences on the given base intensity function and the kernel function and the mark distribution, as a consequences of  high intensity scaling regime.
\begin{assumption}\label{assm_base_intensity_FLLN}
	There exists a locally integrable function $\ol{\mu}^{mH}(.)$ such that 
	\begin{equation*}
	\sup_{0\leq t \leq T} \left | \int_0^t  \mu^{mH}_{n}(s) \diff s- \int_0^t \ol{\mu}^{mH}(s)\diff s\right| \ra 0, \q \xt{as} \q n\ra \infty,
	\end{equation*}
 which is equivalent to say that 
	\begin{equation*}
    \sup_{0\leq t \leq T} \left |\frac{1}{n^{\alpha}} \int_0^{n^{\alpha }t}   \mu^{mH}(s) \diff s- \int_0^t \ol{\mu}^{mH}(s)\diff s\right| \ra 0, \q \xt{as} \q n\ra \infty.
     \end{equation*}
Additionally, by Assumption \ref{assm_initial_intensity_bound},  
     \begin{equation*}
         \sup_{n\geq 1} \sup_{0\leq t \leq T} |\mu^{mH}_{n}(t)|<\infty.
     \end{equation*}
\end{assumption}

\begin{assumption}\label{assm_phi_conv_LLN}
    For every $\delta>0$, and $T>0$, under Assumption \ref{assm_kernel_locally_bounded}, there exists $\Phi(z)$ defined by 
    \begin{equation}\label{def_Phi_z_limit_phi_n_kernel}
        \Phi (z) :=\lim_{n\ra \infty} \int_0^{\infty} \phi(s, n^{\beta}z)  \diff s= \lim_{n\ra \infty} \bar {\Phi}(n^{\beta}z), \quad z\in \mathcal{E}
    \end{equation}
    such that 
    \begin{equation}\label{assm_phi_n_convergence_Phi}
        \sup_{\delta \leq t \leq T} \esp_{z\in \mathcal{E}} 
        \left| \int_0^t \ol{\phi}_{n}(s,z) \diff s - \Phi(z) \right| \ra 0 , \q \xt{as} \q n\ra \infty,
    \end{equation}
    provided the uniform tail condition (\ref{assm_phi_tail_cond}) holds.
In addition, we assume 
\begin{equation}\label{assm_phi_bound_LLN}
    \sup_{n\geq 1} \sup_{0\leq t\leq T}\esp_{z\in \mathcal{E}}
     \int_0^{t} \left|\ol{\phi}_{n}(s,z) \right|\diff s<\infty.
\end{equation}

\end{assumption}

\begin{note}
    Note that the convergence of the kernel $\ol{\phi}_{n}(.,.)$ in Assumption \ref{assm_phi_conv_LLN} holds in $[\delta, T]$ for any $0<\delta <T$, but not generally is uniform on $[0,T]$ as at $t=0$, $\int_0^t \ol{\phi}_{n}(s,z) \diff s =0$. Whereas the limiting value for $t>0$ is the total mass $\Phi (z)$. So there is a jump at time $t=0$ in the limiting cumulative kernel, which is a expected consequence of the kernel function $\phi$ for marked Hawkes process behaving as Dirac-type limit.
\end{note}
\begin{remark}
   The convergence in Assumption \ref{assm_phi_conv_LLN} is not a direct consequences of the scaling (\ref{def_scaled_phi_psi_gamma_n}) and Assumption \ref{assm_kernel_locally_bounded}. The decomposition of (\ref{assm_phi_n_convergence_Phi}) yields 
   \begin{align*}
      \sup_{\delta \leq t \leq T} \esp_{z\in \mathcal{E}}  \left|\int_0^t \ol{\phi}_{n}(s,z) \diff s - \Phi(z) \right|
      & \leq  \sup_{\delta \leq t \leq T} \esp_{z\in \mathcal{E}} 
       \left| \int_0^t \ol{\phi}_{n}(s,z) \diff s - \int_0^{\infty} \phi(s, n^{\beta}z) \diff s\right|\\
       & ~~~~~~~~~~~~~~~~~+ \esp_{z\in \mathcal{E}} 
       \left| \int_0^{\infty} \phi (s,n^{\beta}z) \diff s - \Phi(z)\right|.
   \end{align*}
   Here, the first convergence directly follows from (\ref{def_Phi_z_limit_phi}) and (\ref{assm_phi_tail_cond}). However, in order to get the convergence (\ref{assm_phi_n_convergence_Phi}), we additionally require the convergence in the mark scaling term $\phi(., n^{\beta}z)$, i.e.,
   \begin{equation}\label{assm_phi_convergence_z_scaling}
     \esp_{z\in \mathcal{E}} 
       \left| \bar{\Phi}(n^{\beta}z)- \Phi(z)\right| \ra 0,  \q \xt{as} \q n\ra \infty,
   \end{equation}
   under the invariance property of the mark space $\mathcal{E}$. Note that, it is not mathematically justified to say that $\alpha$ dominates $\beta$ or $\beta$ dominates $\alpha$, they only determine the magnitudes of temporal decay and the mark dependence on the limiting kernel.
\end{remark}

\begin{example}
Due to necessity of the convergence (\ref{assm_phi_convergence_z_scaling}), among the class of functions as discussed in Example \ref{example_phi_nonscaled}, we can particularly choose $h\in \mathbb{L}^{\infty}(\mathcal{E})$ such that $h(n^{\beta}z)$ is bounded and has a limit, say $h_{\infty}(z)$, at infinity uniformly. For example, 
$h(z)= 1+ \frac{1}{1+z}$, or $h(z) = \frac{z}{1+z}$, or $h(z)=\frac{1}{1+z^2}$, $z\in \mathcal{E}=(0, \infty)$. But the convergence may fail if in case of $h$ oscillates $(h(z)=sin(z))$, or explodes at infinity.
\end{example}

\begin{assumption}\label{assm_pi_FLLN}
There exists a probability measure $\ol{\pi}_{t}(\mathcal A)$ on $\mathcal{E}$ i.e. $(\ol{\pi}_{t}(\mathcal E)=1)$ such that for every bounded $\mathscr{B}_{\mathbb{R}_{+}} \otimes \mathscr{B}_{\mathcal{E}}$-measurable function $f$,
\begin{equation}
    \sup_{0\leq t \leq T} \left| \int_{\mathcal{E}} f(t, z) \pi^{n}_{t}(\diff z) - \int_{\mathcal{E}}f(t,z) \ol{\pi}_{t}(\diff z) \right| \ra 0, \quad \xt{as} \quad n \ra \infty.
\end{equation}
\end{assumption}
Instead of assuming the above weak convergence (\ref{assm_pi_FLLN}) of the mark distribution, for instance one can consider the stronger \textit{total variation convergence uniformly on $t$}, i.e.
\begin{equation}\label{assm_pi_FLLN_TV}
  \sup_{0\leq t \leq T}  \sup_{\mathcal A\in \mathscr{B}_{\mathcal E}} | \pi^{n}_{t} (\mathcal A) - \ol{\pi}_{t}(\mathcal A)| \ra 0 , \quad \xt{as} \quad n \ra \infty.
\end{equation}
It is clear from above definitions that the convergence (\ref{assm_pi_FLLN_TV}) implies Assumption \ref{assm_pi_FLLN}.

As a consequence of the scaling (\ref{def_scaled_phi_psi_gamma_n}), for every $\delta >0 $ and $T>0$, Assumptions \ref{assm_phi_conv_LLN} and \ref{assm_pi_FLLN} together yield the following lemma. 
\begin{lemma}\label{lemma_conv_phi_wrt_measure}
Under Assumptions  \ref{assm_phi_conv_LLN} and \ref{assm_pi_FLLN}, for every $\delta>0$ and $T>0$,
\begin{equation}\label{assm_phi_wrt_measure_FLLN}
     \sup_{\substack{0\leq v \leq t \leq T \\t-v \geq \delta}} \left|
     \int_0^t \int_{\mathcal{E}} \ol{\phi}_{n}(s-v,z) \pi_{v}^{n}(\diff z) \diff s - \int_{\mathcal{E}}\Phi(z) \ol{\pi}_{v}(\diff z) \right| \ra 0,  \quad \xt{as} \quad n \ra \infty.
\end{equation}
\end{lemma}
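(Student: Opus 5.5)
The idea is to separate the two sources of convergence: first the kernel, via Assumption \ref{assm_phi_conv_LLN}, then the mark distribution, via Assumption \ref{assm_pi_FLLN}. One caveat first. Since $\phi(t,z)$ is only defined for $t\ge 0$, I read the inner integral as $\int_v^t\ol{\phi}_n(s-v,z)\diff s$, i.e. $\phi$ vanishes on negative times. Substituting $w=s-v$ then gives $\int_0^{t-v}\ol{\phi}_n(w,z)\diff w$, and with this reading the restriction $t-v\ge\delta$ ensures $t-v\in[\delta,T]$. By Fubini, justified by the bound (\ref{assm_phi_bound_LLN}), the quantity inside the absolute value equals
\begin{equation*}
\int_{\mathcal E}\Big(\int_0^{t-v}\ol{\phi}_n(w,z)\diff w-\Phi(z)\Big)\pi^n_v(\diff z)
+\Big(\int_{\mathcal E}\Phi(z)\pi^n_v(\diff z)-\int_{\mathcal E}\Phi(z)\ol{\pi}_v(\diff z)\Big)=:I_n(t,v)+J_n(v).
\end{equation*}
I will bound the supremum of each term separately over the admissible region.

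\emph{The kernel term.} Write $r=t-v\in[\delta,T]$. Then
\begin{equation*}
|I_n(t,v)|\le \sup_{\delta\le r\le T}\esp_{z\in\mathcal E}\Big|\int_0^r\ol{\phi}_n(w,z)\diff w-\Phi(z)\Big|\cdot\pi^n_v(\mathcal E).
\end{equation*}
Here $\pi^n_v(\mathcal E)=1$ by the invariance (\ref{invariant_property_mark_space}). By (\ref{assm_phi_n_convergence_Phi}) this bound is uniform in $(t,v)$ and tends to $0$.

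One subtlety needs care. An essential supremum is taken with respect to some reference measure on $\mathcal E$, while the integration is against $\pi^n_v$. To pass from the essential supremum bound to the $\pi^n_v$-integral bound, $\pi^n_v$ must not charge the exceptional null set, i.e. it must be absolutely continuous with respect to that reference measure. I would state this as the implicit convention behind the essential supremum in Assumption \ref{assm_kernel_locally_bounded}. Alternatively, one can take the essential supremum with respect to the family $\{\pi^n_v\}$ itself.

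\emph{The mark-distribution term.} Observe that $\Phi$ is bounded. Indeed, $\Phi(z)=\lim_n\bar\Phi(n^\beta z)$ and $\bar\Phi$ is essentially uniformly bounded by (\ref{assm_phi_integral_exist}). Measurability of $\Phi$ follows as a pointwise limit of measurable functions. So $f(v,z):=\Phi(z)$ is a bounded $\mathscr B_{\mathbb R_+}\otimes\mathscr B_{\mathcal E}$-measurable function. Assumption \ref{assm_pi_FLLN} then gives $\sup_{0\le v\le T}|J_n(v)|\to0$, and in particular the convergence holds uniformly over the admissible region.

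Combining the two estimates proves (\ref{assm_phi_wrt_measure_FLLN}). The main obstacle is the first step: making the essential supremum bound compatible with integration against the time-varying measures $\pi^n_v$. A secondary point is the interpretation of the lower limit $0$ versus $v$ in the $s$-integral. If one insists on the integral over $[0,t]$ with $\phi$ not extended by zero, the extra piece $\int_0^v\ol{\phi}_n(s-v,z)\diff s$ must be handled separately. I would therefore make the convention $\phi(t,z)=0$ for $t<0$ explicit at the start of the proof.
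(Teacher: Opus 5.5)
Your proof is correct and is essentially the paper's argument: restrict to $\int_v^t$ using $\ol{\phi}_{n}(s-v,z)=0$ for $s<v$, shift variables, and separate a kernel error from a mark-distribution error. On the kernel side, the paper splits this error further into a tail piece controlled by (\ref{assm_phi_tail_cond}) and the mark-scaling piece (\ref{assm_phi_convergence_z_scaling}), whereas you invoke (\ref{assm_phi_n_convergence_Phi}) from Assumption \ref{assm_phi_conv_LLN} directly. On the mark-distribution side, you apply Assumption \ref{assm_pi_FLLN} to the single test function $f(v,z)=\Phi(z)$. This is tighter than the paper's bound $\esp_{z\in\mathcal{E}}|\Phi(z)|\int_{\mathcal{E}}|\pi^{n}_{v}(\diff z)-\ol{\pi}_{v}(\diff z)|$, which tacitly requires the total-variation convergence (\ref{assm_pi_FLLN_TV}) rather than Assumption \ref{assm_pi_FLLN} as stated.
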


\noindent
\textbf{Convergences of $\ol{\psi}_{n}(t,s)$ and $\ol{\Gamma}_n(t,s)$:}
For the scaled weighted kernel $\ol{\psi}_{n}(t,s)$, the definition (\ref{def_scaled_phi_psi_gamma_n}) yields 
\begin{equation*}
    \ol{\psi}_{n}(t,s)= \int_{\mathcal{E}} \ol{\phi}_{n}(t-s,z) \pi_{s}^{n}(\diff z),\q  t\geq s \geq 0,
\end{equation*}
where the scaled kernel $\ol{\phi}_{n}(t,z)$ is supported on $\mathbb{R}_{+}$, i.e., $\ol{\phi}_n(t,z)=0$ for $t<0$. Consequently $\ol{\psi}_n(t,s)=0$ for $t<s$. It follows from Lemma \ref{lemma_conv_phi_wrt_measure} that for every $\delta>0$ and $T>0$,
\begin{equation}\label{assm_psi_LLN}
   \sup_{\substack{0\leq v \leq t \leq T \\t-v \geq \delta}} \left|\int_{0}^{t} \ol{\psi}_{n}(s,v) \diff s - \Psi(v) \right|,\ra 0,  \quad \xt{as} \quad n \ra \infty.
\end{equation}
where $\Psi(v):= \int_{\mathcal{E}} \Phi(z) \ol{\pi}_{v}(\diff z)$.

Similarly, for scaled resolvent kernel $\ol{\Gamma}_{n}(t,s)$, we get the convergence 
\begin{equation}\label{assm_gamma_LLN}
   \sup_{\substack{0\leq v \leq t \leq T \\t-v \geq \delta}} \left|\int_{0}^{t} \ol{\Gamma}_{n}(s,v) \diff s - \frac{\Psi(v)}{1-\Psi(v)} \right| \ra 0,  \quad \xt{as} \quad n \ra \infty,
\end{equation}
for every $\delta>0$ and $T>0$, provided 
\begin{equation}\label{psi_stability_LLN}
    \sup_{0\leq v \leq T}\Psi(v)<1,
\end{equation}
for every $T>0$. Here, the stability condition (\ref{psi_stability_LLN}) is direct consequence of (\ref{psi_subcritical_condition}).
It is evident from the above convergence relation (\ref{assm_gamma_LLN}) along with (\ref{assm_phi_bound_LLN}) that 
\begin{equation}\label{assm_gamma_n_bound}
    \sup_{n\geq 1} \sup_{0\leq v \leq t \leq T } \int_0^t \left|\ol{\Gamma}_{n}(s,v) \right| \diff s  < \infty.
\end{equation}
\begin{remark}
    From \ref{assm_psi_LLN}, it is straightforward that the limiting resolvent mass is the summable series $\sum_{k=1}^{\infty} \Psi(v)^{k}$, and the stability assumption $\sup_{0\leq v \leq T}\Psi(v)<1$ yields that this limit $\frac{\Psi(v)}{1-\Psi(v)}$ is a singular resolvent measure, a Dirac-type kernel concentrated on the diagonal at $s=v$. 
\end{remark}

Before stating the limiting theorems for marked Hawkes processes under the multi-scaling regime, we need to identify the limiting behaviour of the intensity process and analyze it. We introduce the integral process $\{\ol{N}^{mH}(t), t\geq 0\}$, where
\begin{equation}\label{def_LLN_N^{mH}_limit}
\overline{N}^{mH}(t):= \int_0^t \frac{\ol{\mu}^{mH}(s) }{1-\Psi(s)} \diff s, \q t \geq 0.
\end{equation}
For our convenience, we additionally define 
\begin{equation}
    C^{mH}(t):= \frac{\ol{\mu}^{mH}(t) }{1-\Psi(t)}, \q t\geq 0,
\end{equation}
so that $\overline{N}^{mH}(t)= \int_0^t C^{mH}(s) \diff s$.

Our aim is to establish that the deterministic integral process $\{\ol{N}^{mH}(t), t\geq 0\}$ defined above is the limiting process of $\{\ol N^{mH}_{n}(t), t\geq 0\}$ as $n\ra \infty$ with trajectories in $\md ([0,\infty), \mathbb{R})$. Before that, we need to show that the cumulative expected intensity process given by $\{\int_0^t \E (\lambda_{n}^{mH}(t)), t\geq 0\}$ converges to $\{\ol N^{mH}_{n}(t), t\geq 0\}$ deterministically, which is presented in next Lemma \ref{lemma_intensity_LLN_expected}  with proof in Section \ref{section_proofs of auxiliary results}.

\begin{lemma}\label{lemma_intensity_LLN_expected}
	Suppose Assumptions \ref{assm_initial_intensity_bound}, \ref{assm_kernel_locally_bounded} and \ref{assm_base_intensity_FLLN}, \ref{assm_phi_conv_LLN}, \ref{assm_pi_FLLN} are satisfied. Then
	\begin{equation*}
	\lim_{n\ra \infty}   \sup_{0\leq t \leq T}\left | \int_0^t \E \left( \lambda_{n}^{mH}(s) \right)\diff s-    \int_0^t C^{mH}(s) \diff s \right| =0. 
	\end{equation*}
\end{lemma}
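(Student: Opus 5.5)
Starting from the representation (\ref{def_E(lambda_n(t))}), integrate in time and use Fubini:
\begin{equation*}
\int_0^t \E\left(\lambda^{mH}_{n}(s)\right)\diff s = \int_0^t \mu^{mH}_{n}(v)\diff v + \int_0^t \mu^{mH}_{n}(v)\left(\int_v^t \ol{\Gamma}_{n}(s,v)\diff s\right)\diff v .
\end{equation*}
The target splits in the same way,
\begin{equation*}
\int_0^t C^{mH}(v)\diff v=\int_0^t \ol{\mu}^{mH}(v)\diff v+\int_0^t \ol{\mu}^{mH}(v)\frac{\Psi(v)}{1-\Psi(v)}\diff v,
\end{equation*}
because $\frac{1}{1-\Psi}=1+\frac{\Psi}{1-\Psi}$. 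The first pieces agree uniformly on $[0,T]$ by Assumption \ref{assm_base_intensity_FLLN}. What remains is to show that
\begin{equation*}
\sup_{0\le t\le T}\left|\int_0^t \mu^{mH}_{n}(v)\,G_n(t,v)\diff v-\int_0^t \ol{\mu}^{mH}(v)\,G(v)\diff v\right|\to 0 ,
\end{equation*}
where $G_n(t,v):=\int_v^t\ol{\Gamma}_n(s,v)\diff s$ and $G(v):=\Psi(v)/(1-\Psi(v))$. Since $\ol{\Gamma}_n(s,v)=0$ for $s<v$, $G_n(t,v)$ coincides with $\int_0^t\ol{\Gamma}_n(s,v)\diff s$, which is the quantity controlled in (\ref{assm_gamma_LLN}).

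Fix $\delta>0$ and cut the $v$-integral at $t-\delta$. On $\{v\le t-\delta\}$ we have $t-v\ge\delta$, so (\ref{assm_gamma_LLN}) gives $\sup|G_n(t,v)-G(v)|\to0$. Multiplied by the uniform bound $\sup_n\|\mu_n^{mH}\|_T<\infty$, this part of the error is $o(1)$ uniformly in $t$. The near-diagonal strip $v\in(t-\delta,t]$ is bounded by $\delta\,\sup_n\|\mu^{mH}_n\|_T\,\sup_{n,v,t}|G_n(t,v)|$, and by (\ref{assm_gamma_n_bound}) this is $O(\delta)$. The corresponding limit term is $O(\delta)$ as well, since (\ref{psi_stability_LLN}) and $\Psi\ge0$ keep $G$ bounded on $[0,T]$. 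This cutoff is exactly how the Dirac-type concentration of $\ol{\Gamma}_n$ at the diagonal is handled: the missing mass in the strip is simply paid for by $\delta$.

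We are left with $\int_0^{t-\delta}(\mu^{mH}_n-\ol{\mu}^{mH})(v)G(v)\diff v$, and this is the main obstacle. Assumption \ref{assm_base_intensity_FLLN} only gives convergence of the primitives $\int_0^t\mu^{mH}_n$ uniformly, which is a weak-type convergence, not pointwise convergence of $\mu^{mH}_n$. We therefore need to integrate against $G$, which is not smooth. The plan is to approximate $G$ in $\mathbb{L}^1([0,T])$ by a step function $G^\epsilon$, which is possible since $G$ is bounded and measurable. Against a step function the integral is a finite combination of increments $\int_a^b(\mu_n^{mH}-\ol{\mu}^{mH})$, and these tend to $0$ uniformly in the upper limit $t-\delta$. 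The approximation error is at most $(\sup_n\|\mu^{mH}_n\|_T+\|\ol{\mu}^{mH}\|_{\mathbb{L}^\infty_T})\|G-G^\epsilon\|_{\mathbb{L}^1}$, where $\ol{\mu}^{mH}$ is bounded as an a.e.\ limit of uniformly bounded densities.

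To conclude, let $n\to\infty$, then $\epsilon\to0$, then $\delta\to0$.
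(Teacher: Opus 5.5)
Your argument is correct, and it takes a different and more careful route than the paper's proof. Both proofs start from the same Fubini identity. The paper, however, splits the product as $G_n(t,v)\bigl(\mu^{mH}_n-\ol{\mu}^{mH}\bigr)(v)+\ol{\mu}^{mH}(v)\bigl(G_n(t,v)-G(v)\bigr)$. It then bounds the first piece by $\sup_{v\le t}|G_n(t,v)|\cdot\sup_{t\le T}\bigl|\int_0^t(\mu^{mH}_n-\ol{\mu}^{mH})\diff v\bigr|$. That inequality is not valid in general. Assumption \ref{assm_base_intensity_FLLN} controls only the primitives of the signed function $\mu^{mH}_n-\ol{\mu}^{mH}$, and a $v$-dependent weight cannot be pulled out of such an integral. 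For the second piece, $\sup_{0\le v\le t\le T}|G_n(t,v)-G(v)|$ does not vanish, since $G_n(t,t)=0$. The paper handles that diagonal region only by a brief appeal to (\ref{assm_gamma_LLN}) and (\ref{assm_gamma_n_bound}).

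Your split $\mu^{mH}_n(G_n-G)+(\mu^{mH}_n-\ol{\mu}^{mH})G$ avoids both problems. The uniformly bounded $\mu^{mH}_n$ sits against the kernel error, so only (\ref{assm_gamma_LLN}) away from the diagonal is needed. The fixed weight $G$, which depends on neither $n$ nor $t$, sits against the weakly convergent difference, and your step-function approximation handles that correctly. Your explicit $O(\delta)$ strip estimate then makes the Dirac-type diagonal contribution rigorous. The paper's route is shorter, but yours actually closes the argument.

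One small repair is needed. The function $\ol{\mu}^{mH}$ is not an a.e. limit of the $\mu^{mH}_n$, which need not converge pointwise. The right justification is as follows. Set $M=\sup_n\|\mu^{mH}_n\|_T$. Then $\bigl|\int_a^b\ol{\mu}^{mH}\diff v\bigr|=\lim_n\bigl|\int_a^b\mu^{mH}_n\diff v\bigr|\le M(b-a)$, so the primitive of $\ol{\mu}^{mH}$ is $M$-Lipschitz. By Lebesgue differentiation, $|\ol{\mu}^{mH}|\le M$ a.e. on $[0,T]$.
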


    Next thing is to show that the rescaled cumulative intensity process defined by $\{\int_0^t \lambda ^{mH}_{n}(s)\diff s, t\geq 0\}$ converges to the deterministic entity $\{\ol N^{mH}_{n}(t), t\geq 0\}$ in probability uniformly on compact sets as $n\ra  \infty $ in $\mathbb{C} ([0,\infty), \mathbb{R})$. For this purpose, by Proposition \ref{prop_intensity_martingale}, and definitions (\ref{def_intensity_scaled}) (\ref{def_E(lambda_n(t))}), we consider the difference
    \begin{align*}
    &\int_0^t \lambda^{mH}_{n} (s) \diff s- \int_0^t \E \left( \lambda^{mH}_{n}(s) \right) \diff s\\
    =
    & \int_0^{t} \int_0^{s} \int_{\mathcal{E}} \int_{0}^{\lambda_{n} ^{mH}(v-)}
	\phi_{n}(s-v, z) M^{P}_{n}(\diff v, \diff z, \diff u) \diff s\\
	&+  \int_0^{t} \int_0^s \Gamma_{n} (s,v) \int_0^v \int_{\mathcal{E}} \int_{0}^{\lambda_{n}^{mH}(w-)} \phi_{n} (v-w, z) M^{P}_{n}(\diff w, \diff z, \diff u) \diff v \diff s,
    \end{align*}
    where 
    \begin{equation*}
        M^{P}_{n} ( \diff s \diff z , \diff u)= M^{P} ( n^{\alpha}\diff s, n^{\beta} \diff z, \diff u).
    \end{equation*}
    After the change of variables, we obtain
    \begin{align*}
    &\int_0^t \lambda^{mH}_{n} (s) \diff s- \int_0^t \E \left( \lambda^{mH}_{n}(s) \right) \diff s \notag \\ 
    &=    \frac{1}{n^{\alpha}}  \int_0^{n^{\alpha}t} \int_0^{s} \int_{n^{\beta} \mathcal{E}} \int_{0}^{\lambda^{mH}(v-)}
	\phi(s-v, z) M^{P}(\diff v, \diff z, \diff u) \diff s
    \notag \\
	&+   \frac{1}{n^{\alpha}} \int_0^{n^{\alpha} t} \int_0^s \Gamma (s,v) \int_0^v \int_{n^{\beta}\mathcal{E}} \int_{0}^{\lambda^{mH}(w-)} \phi (v-w, z) M^{P}(\diff w, \diff z, \diff u) \diff v \diff s.
    \end{align*}

	Define 
	\begin{align}
	X_{n,1}(t)=&\int_0^{n^\alpha t} \int_0^{s} \int_{n^{\beta}\mathcal{E}} \int_{0}^{\lambda ^{mH}(v-)}
	\phi(s-v, z) M^{P}(\diff v, \diff z, \diff u) \diff s,
	\label{def_X_{n,1}}\\
	X_{n,2}(t)=&
	\int_0^{n^\alpha t} \int_0^s \Gamma (s,v) \int_0^v \int_{n^{\beta}\mathcal{E}} \int_{0}^{\lambda^{mH}(w-)}
	\phi (v-w, z)
	M^{P}(\diff w, \diff z, \diff u) \diff v \diff s\label{def_X_{n,2}}
	\end{align}
    such that 
    \begin{equation}\label{sum_of_X_{n,1}_X_{n,2}}
        \int_0^t \lambda^{mH}_{n} (s) \diff s- \int_0^t \E \left( \lambda^{mH}_{n}(s) \right) \diff s 
        = \frac{1}{n^{\alpha}} \left( X_{n,1}(t) + X_{n,2} (t)\right).
    \end{equation}
	Applying Fubini's theorem to change the order of integration (\ref{def_X_{n,1}}), (\ref{def_X_{n,2}}) and $n^{\beta} \mathcal{E}= \mathcal{E}$, we obtain 
	\begin{align}
	X_{n,1}(t)=&\int_0^{n^{\alpha}t}\int_{\mathcal E}\int_{0}^{\lambda^{mH}(v-)}  \int_{v}^{n^{\alpha}t} 
	\phi(s-v,z) M^{P}(\diff v, \diff z, \diff u) \diff s, \label{def_new_X_{n,1}}
	\\
	X_{n,2}(t)=& \int_0^{n^{\alpha}t} \int_{\mathcal E}\int_0^{\lambda^{mH}(v-)} \int_{v}^{n^{\alpha}t }
	\int_{w}^{n^{\alpha }t}
	\Gamma(s,w) \phi(w-v,z) M^{P}(\diff v, \diff z, \diff u) \diff w \diff s. \label {def_new_X_{n,2}}
	\end{align}
	By (\ref{def_new_X_{n,1}}) and (\ref{def_new_X_{n,2}}), 
	\begin{align}\label{diff_intensity_FLLN}
	&X_{n,1}(t)+X_{n,2}(t)\notag \\
	&=\int_0^{n^{\alpha}t} \int_{\mathcal E}\int_{0}^{\lambda^{mH}(v-)}
	M^{P}(\diff v, \diff z, \diff u) 
	\int_v^{n^{\alpha}t} \bigg( \phi(w-v,z)+\int_{w}^{n^{\alpha}t}\Gamma (s,w) \phi(w-v,z) \diff s
	\bigg) \diff w.
	\end{align}	
	Following the definition (\ref{def_tau_t(v,z)_function}), we consider the scaled entity $\mathcal T_{n^\alpha t}(.,.)$ on $\mathbb{R}_{+}\times \mathcal E$, where  
	\begin{equation}\label{def_tau_t(v,z)_scaled_function}
	\mathcal T_{n^{\alpha}t}(v,z)= \int_v^{n^{\alpha}t} \phi(w-v,z) \bigg(1	+\int_{w}^{n^{\alpha}t}\Gamma (s,w)  \diff s	\bigg) \diff w  .
	\end{equation}
	As a result, the representation (\ref{sum_of_X_{n,1}_X_{n,2}}) and (\ref{diff_intensity_FLLN}) yields
	\begin{equation*}
	\int_0^{t} \lambda_{n}^{mH}(s)\diff s-  \int_0^{t} \E \left( \lambda_{n}^{mH} (s) \right) \diff s= \frac{1}{n^{\alpha}} \int_0^{n^{\alpha}t} \int_{\mathcal E}\int_{0}^{\lambda^{mH}(v-)}
	\mathcal T_{n^{\alpha}t}(v,z)	M^{P}(\diff v, \diff z, \diff u). 
	\end{equation*}
	Taking $n\ra \infty$, by definition (\ref{def_tau_t(v,z)_scaled_function}) we obtain 
	\begin{align*}
	\lim_{n\ra \infty} \mathcal{T}_{n^{\alpha}t}(v,z)
    = & \int_v^{\infty}  \phi(w-v,z) \bigg(1 +\int_w^{\infty} \Gamma (s,w) \diff s\bigg) \diff w \notag \\
	&\leq  \esp_{z\in \mathcal E} \int_0^{\infty}
    |\phi(w,z)| \bigg(1+\int_0^{\infty}  \Gamma(s,w) \diff s \bigg) \diff w \notag \\
	&<\infty, 
	\end{align*}
	where the last inequality follows from Assumption \ref{assm_kernel_locally_bounded}.

     Using change of variables, it can be shown from definitions (\ref{def_tau_t(v,z)_function}) and (\ref{def_tau_t(v,z)_scaled_function}) via (\ref{def_scaled_phi_psi_gamma_n}) that 
    \begin{equation}\label{def_tau_t(v,z)_scaled_new}
      \mathcal T_{n^{\alpha}t}(v,z) = \int_v^t \ol{\phi}_{n}
      (w-v,z)\left(1 +\int_w^t \ol{\Gamma}_{n}(s,w)  \diff s \right) \diff w.
    \end{equation}
    Define 
        \begin{equation}\label{def_tau_infty}
         \mathcal{T}_{\infty}(v,z)=  \frac{\Phi(z)}{1-\Psi(v)}, \q v\geq 0, \q z\in \mathcal{E}.
         \end{equation} 
         One can show that $\mathcal T_{n^{\alpha}t}(v,z)$ converges to $\mathcal{T}_{\infty}(v,z)$ for every $t>0$ as $n \ra \infty$. This convergence is not only limited to point-wise. One can develop the stronger uniform convergence on $t$ and $z$.   
    The next lemma establishes that $\mathcal T_{n^{\alpha}t}(v,z)$ converges to $\mathcal{T}_{\infty}(v,z)$ uniformly on every compact sets provided $t-v\geq \delta$ for any $T>\delta >0$ under suitable conditions, which essentially leads us to define certain martingale process in order to prove the main results of the article.

     \begin{lemma}\label{lemma_tau_convergence_tau_infty}
         Under Assumptions \ref{assm_kernel_locally_bounded}, \ref{assm_phi_conv_LLN}, for every $\delta>0$ and $T>0$, the following assertions hold.
         \begin{enumerate}[(i)]
             \item 
             \begin{equation} \label{lemma_tau_limit_bound_1}
                 \sup_{n\geq 1}\sup_{ 0 \leq v \leq  t \leq T} \esp_{z\in \mathcal{E}} 
                 \left|\mathcal{T}_{n^{\alpha}t}(v,z) \right|< \infty.
             \end{equation}
             \item 
             \begin{equation}\label{lemma_tau_conv_2}
             \sup_{\substack{0 \leq v \leq  t \leq T\\ t-v \geq \delta}} \esp_{z\in \mathcal{E}} \left|\mathcal{T}_{n^{\alpha}t}(v,z) - \mathcal{T}_{\infty}(v,z) \right| \ra 0 , \q \text{as} \q n \ra \infty.
             \end{equation}         
         \end{enumerate}
     \end{lemma}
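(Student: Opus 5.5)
We work throughout from the rescaled representation (\ref{def_tau_t(v,z)_scaled_new}),
\begin{equation*}
\mathcal T_{n^{\alpha}t}(v,z)=\int_v^t \ol{\phi}_{n}(w-v,z)\,\diff w+\int_v^t \ol{\phi}_{n}(w-v,z)\,G_n(t,w)\,\diff w,\qquad G_n(t,w):=\int_w^t \ol{\Gamma}_{n}(s,w)\,\diff s .
\end{equation*}
Each of the two pieces is treated as an integral of the bounded function $1$ (respectively $G_n(t,\cdot)$) against the $z$-indexed family of measures $\ol{\phi}_{n}(w-v,z)\diff w$. By the scaling of Assumption \ref{assm_phi_conv_LLN}, these measures concentrate their mass $\Phi(z)$ at $w=v$.

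\textbf{Part (i).} Bound $|\mathcal T_{n^{\alpha}t}(v,z)|$ by
\begin{equation*}
\int_0^{t-v}|\ol{\phi}_{n}(w,z)|\diff w\,\Bigl(1+\sup_{v\le w\le t}|G_n(t,w)|\Bigr).
\end{equation*}
The first factor is controlled uniformly in $n$, $t\le T$ and a.e. $z$ by (\ref{assm_phi_bound_LLN}). The second factor is controlled by (\ref{assm_gamma_n_bound}), since $|G_n(t,w)|\le\int_0^t|\ol\Gamma_n(s,w)|\diff s$. Multiplying the two bounds gives (\ref{lemma_tau_limit_bound_1}).

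\textbf{Part (ii).} The first piece is $\int_0^{t-v}\ol{\phi}_{n}(w,z)\diff w$ with $t-v\in[\delta,T]$. By (\ref{assm_phi_n_convergence_Phi}) it converges to $\Phi(z)$ uniformly in $z$ and in $t-v\ge\delta$. For the second piece, fix a small $\eta\in(0,\delta/2)$ and split the $w$-integral at $w=v+\eta$.

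\emph{Tail part $w\in[v+\eta,t]$.} Its contribution is at most
\begin{equation*}
\sup_n\|G_n\|\cdot\esp_z\int_{\eta}^{T}|\ol\phi_n(w,z)|\diff w=\sup_n\|G_n\|\cdot\esp_z\int_{n^\alpha\eta}^{n^\alpha T}|\phi(w,n^\beta z)|\diff w .
\end{equation*}
This tends to $0$ by the uniform tail condition (\ref{assm_phi_tail_cond}), together with invariance $n^\beta\mathcal E=\mathcal E$ so that the ess sup over $n^\beta z$ equals the ess sup over $z$.

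\emph{Head part $w\in[v,v+\eta]$.} Here $t-w\ge\delta-\eta\ge\delta/2$. Write
\begin{equation*}
G_n(t,w)=\int_0^t\ol\Gamma_n(s,w)\diff s,
\end{equation*}
using $\ol\Gamma_n(s,w)=0$ for $s<w$. Then (\ref{assm_gamma_LLN}) with $\delta/2$ gives $\sup|G_n(t,w)-\Psi(w)/(1-\Psi(w))|\to0$. Hence the head part equals
\begin{equation*}
\int_v^{v+\eta}\ol\phi_n(w-v,z)\,\frac{\Psi(w)}{1-\Psi(w)}\diff w+o(1),
\end{equation*}
uniformly, using again the bound (\ref{assm_phi_bound_LLN}). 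Next, replace $\Psi(w)$ by $\Psi(v)$. The error is at most
\begin{equation*}
C\,\omega(\eta)\,\esp_z\int_0^{\eta}|\ol\phi_n|,
\end{equation*}
where $\omega$ is a modulus of continuity of $\Psi/(1-\Psi)$ on $[0,T]$; the denominator stays away from zero by (\ref{psi_stability_LLN}). Finally, $\int_0^{\eta}\ol\phi_n(w,z)\diff w\to\Phi(z)$ uniformly in $z$ by (\ref{assm_phi_n_convergence_Phi}) with $\delta$ replaced by $\eta$. Collecting terms gives
\begin{equation*}
\Phi(z)\Bigl(1+\frac{\Psi(v)}{1-\Psi(v)}\Bigr)=\mathcal T_\infty(v,z),
\end{equation*}
up to an error bounded by $C\omega(\eta)+o_n(1)$. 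Letting $n\to\infty$ and then $\eta\to0$ gives (\ref{lemma_tau_conv_2}).

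\textbf{Main obstacle.} The delicate step is the replacement $\Psi(w)\approx\Psi(v)$ near the diagonal. It needs some regularity of $v\mapsto\Psi(v)=\int\Phi\,\diff\ol\pi_v$, namely continuity, or at least uniform continuity on $[0,T]$. The stated assumptions do not literally provide this. I would first try to derive it from the uniform-in-$t$ convergence in Assumption \ref{assm_pi_FLLN}, together with continuity of $t\mapsto\pi_t^n$ on the $n^{-\alpha}$ scale. Failing that, I would add continuity of $\ol\pi_\cdot$ as an implicit hypothesis, since $\mathcal T_\infty$ itself is only meaningful if $\Psi$ is well behaved.

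A secondary point is that (\ref{assm_gamma_LLN}) is only stated for $t-v\ge\delta$, which is why $\eta<\delta/2$ is needed. One must also check that the constants in the tail part do not depend on $\eta$, which (\ref{assm_gamma_n_bound}) ensures.
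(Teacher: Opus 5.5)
Your argument matches the paper's proof. Part (i) uses the same bounds, (\ref{assm_phi_bound_LLN}) and (\ref{assm_gamma_n_bound}). Part (ii) uses the same ingredients: concentration of $\ol{\phi}_{n}$ at the diagonal, the resolvent convergence (\ref{assm_gamma_LLN}) away from the diagonal, and the replacement $a(w)\approx a(v)$ with $a=1/(1-\Psi)$. The paper splits the $A_n(t,w)-a(w)$ term at $w=t-\eta$ rather than at $w=v+\eta$, but this is only a reorganization.

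The obstacle you flag is real, and the paper does not resolve it either. Its bound on $\int_v^t \ol{\phi}_{n}(w-v,z)\bigl(a(w)-a(v)\bigr)\,\diff w$ is asserted ``without loss of generality'', which silently assumes uniform continuity of $\Psi$ on $[0,T]$. So the continuity hypothesis you propose to add is exactly what the paper's own argument needs.
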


Knowing the characterization of  $ \mathcal{T}_{n^{\alpha}t}(.,.)$ as above, for  a  given $\mathscr{B}_{\mathcal E}$-measurable,  real valued function $f:\mathcal E\ra \mathbb{R}$,  we define the integral process associated with the martingale measure $ M^{P}(\diff v, \diff z, \diff u)$ as follows:
	\begin{equation}\label{def_Upsilon_{n}(t)}
	\Upsilon_{n,t}(f)= \int_0^{n^{\alpha}t} \int_{\mathcal E} \int_0^{\lambda^{mH}(v-)} f(z) \mathcal{T}_{n^{\alpha}t}(v,z) M^{P}(\diff v, \diff z, \diff u) , \q t\geq 0.
	\end{equation}

    However, the process $\{\Upsilon_{n,t}(f), t\geq 0\}$ is not an $\mathscr{F}_{n}$-martingale due to the presence of $\tau_{n^{\alpha}t}(v,z)$ in the integrand, as it changes with respect to time $t$. Therefore, in order to preserve the martingale property, we define 
	\begin{equation}\label{def_Upsilon_tilda_n}
	\widetilde{\Upsilon}_{n,t}(f)= \int_0^{n^{\alpha}t} \int_{\mathcal E} \int_0^{\lambda^{mH}(v-)} f(z) \mathcal{T}_{\infty}(v,z) M^{P}(\diff v, \diff z, \diff u) , \q t\geq 0,
	\end{equation}	
    for given bounded $\mathscr{B}_{\mathcal{E}}$-measurable function $f$ and $\mathcal{T}_{\infty}(v,z)$, independent of the terminal variable $t$. The following proposition shows that the latter process $\frac{1}{n^{\alpha/2}}\widetilde{\Upsilon}_{n,t}(f)$ is distributionally equivalent to $\frac{1}{n^{\alpha/2}} \Upsilon_{n,t}(f)$ uniformly on every compact intervals at diffusion scaling factor $\frac{1}{n^{\alpha/2}}$, which immediately results 
            \begin{equation*}
            \lim_{n\ra \infty} \mathbb{P} \left( \sup_{0 \leq t \leq T} \left| \frac{1}{n^{\alpha/2+\eta}}\Upsilon_{n,t}(f)- \frac{1}{n^{\alpha/2+\eta}}\widetilde{\Upsilon}_{n,t}(f) \right|>\epsilon \right) =0,
        \end{equation*}   
        for every $\eta> 0$.
    \begin{proposition}\label{prop_upsilon_tilde_upsilon_L_2}
        Suppose the assumptions of Lemma \ref{lemma_tau_convergence_tau_infty} are satisfied. Then, for every $T>0$, and $\epsilon>0$, 
        \begin{equation*}
            \lim_{n\ra \infty} \mathbb{P} \left( \sup_{0 \leq t \leq T} \left| \frac{1}{n^{\alpha/2}}\Upsilon_{n,t}(f)- \frac{1}{n^{\alpha/2}}\widetilde{\Upsilon}_{n,t}(f) \right| > \epsilon \right) =0 .
        \end{equation*}
    \end{proposition}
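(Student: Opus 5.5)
The difference $D_n(t):=\Upsilon_{n,t}(f)-\widetilde{\Upsilon}_{n,t}(f)$ is a stochastic integral against $M^{P}$ with integrand $f(z)\bigl(\mathcal{T}_{n^{\alpha}t}(v,z)-\mathcal{T}_{\infty}(v,z)\bigr)$. This integrand depends on the terminal time $t$, so $D_n$ is not a martingale in $t$. The plan is to control it by splitting into an increment near the terminal time and a part away from it, and then to upgrade pointwise second-moment bounds to a uniform bound through a time discretisation. Throughout, $f$ is bounded by $\|f\|$.

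\textbf{Step 1 (fixed $t$, second moments).} For fixed $t$, the process $r\mapsto\int_0^{r}\int_{\mathcal E}\int_0^{\lambda^{mH}(v-)}f(z)(\mathcal T_{n^\alpha t}-\mathcal T_\infty)(v,z)M^P(\diff v,\diff z,\diff u)$ is a square-integrable martingale, by Lemma \ref{lemma_N^{mH}_martingale} together with Lemma \ref{lemma_bound of E lambda^{mH}(t)} and part (i) of Lemma \ref{lemma_tau_convergence_tau_infty}. Its value at $r=n^\alpha t$ is $D_n(t)$, so the It\^o isometry and the substitution $v=n^\alpha v'$ give
\begin{equation*}
\frac{1}{n^{\alpha}}\E\bigl(D_n(t)^2\bigr)=\int_0^{t}\int_{\mathcal E}f(z)^2\bigl(\mathcal T_{n^\alpha t}(n^\alpha v',z)-\mathcal T_\infty(n^\alpha v',z)\bigr)^2\E(\lambda^{mH}_n(v'))\,\pi^{n}_{v'}(\diff z)\diff v'.
\end{equation*}
I use the scaled form (\ref{def_tau_t(v,z)_scaled_new}) with the rescaled variable $v'$. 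I then split the $v'$-integral into the regions $t-v'\ge\delta$ and $t-v'<\delta$. On the first region, part (ii) of Lemma \ref{lemma_tau_convergence_tau_infty} makes the squared difference $o(1)$ uniformly. On the second region, part (i) of the same lemma and the bound on $\mathcal T_\infty$ coming from (\ref{psi_stability_LLN}) bound the integrand by a constant. In both regions, $\sup_n\sup_{s\le T}\E(\lambda_n^{mH}(s))<\infty$ holds by (\ref{def_E(lambda_n(t))}), (\ref{assm_gamma_n_bound}) and Assumption \ref{assm_base_intensity_FLLN}. This yields
\begin{equation*}
\sup_{t\le T}\frac{1}{n^{\alpha}}\E\bigl(D_n(t)^2\bigr)\le o(1)+C\delta,
\end{equation*}
and since $\delta$ is arbitrary, $n^{-\alpha/2}D_n(t)\to0$ in $L^2$ uniformly in $t\le T$.

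\textbf{Step 2 (uniformity in $t$).} This is the main obstacle, because $D_n$ is not a martingale and Doob's inequality cannot be applied directly. I fix a grid $0=t_0<t_1<\dots<t_K=T$ of mesh $h$ and, for $t\in[t_{k},t_{k+1}]$, write
\begin{equation*}
D_n(t)=D_n(t_k)+\bigl(\widetilde\Upsilon_{n,t_k}-\widetilde\Upsilon_{n,t}\bigr)+\bigl(\Upsilon_{n,t}-\Upsilon_{n,t_k}\bigr).
\end{equation*}
\begin{itemize}
\item The finitely many grid values $D_n(t_k)$ are handled by Step 1 and a union bound.
\item The term $\widetilde\Upsilon_{n,\cdot}$ is a genuine $\mathscr F_n$-martingale in $t$, with quadratic variation $\int\mathcal T_\infty^2\lambda$. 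Doob's inequality therefore gives $\E\sup_{t\in[t_k,t_{k+1}]}|\widetilde\Upsilon_{n,t}-\widetilde\Upsilon_{n,t_k}|^2\le Cn^{\alpha}h$. Summing over the $K=T/h$ blocks then needs a fourth-moment (BDG) bound of order $n^{2\alpha}h^2$ so that the total is $O(h)$.
\item For $\Upsilon_{n,\cdot}$, I write $\Upsilon_{n,t}-\Upsilon_{n,t_k}$ as a new-arrivals part on $(n^\alpha t_k,n^\alpha t]$, handled like $\widetilde\Upsilon$, plus the part with integrand $\mathcal T_{n^\alpha t}-\mathcal T_{n^\alpha t_k}$ over $[0,n^\alpha t_k]$.
\end{itemize}
The last part is the delicate one. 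I will exploit monotonicity: when $\phi\ge0$ and $\Gamma\ge0$, the map $t\mapsto\mathcal T_{n^\alpha t}(v,z)$ is nondecreasing. Writing $M^P=P-\text{compensator}$, both the Poisson part and the compensator part are then monotone in $t$, so their oscillation over a block is bounded by their values at the block endpoints. Those endpoint values are controlled in expectation by $\E\int\int(\mathcal T_{n^\alpha t_{k+1}}-\mathcal T_{n^\alpha t_k})\lambda$. By Lemma \ref{lemma_tau_convergence_tau_infty}, this expectation is $n^\alpha\cdot o(1)$ away from the diagonal, since both $\mathcal T$'s are close to $\mathcal T_\infty$ there, and $O(n^\alpha h)$ near the diagonal. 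The monotone first-order bound gives order $n^{\alpha}$ rather than $n^{\alpha/2}$, so I expect to need to centre it: I would compare with the compensator and apply a second-moment estimate to the compensated monotone differences, again using Lemma \ref{lemma_tau_convergence_tau_infty}.

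\textbf{Step 3.} Letting $n\to\infty$ and then $h\to0$ proves the proposition.
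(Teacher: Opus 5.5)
Your Step~1 is correct. You have also found the real difficulty: $D_n$ is not a martingale in $t$, so the supremum over $t$ needs its own argument.

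The paper does not supply one. It bounds $\sup_{t\le T}n^{-\alpha/2}|D_n(t)|$ by $\|f\|_{\mathcal E}$ times $C_n([0,T])=\sup_{v\le t\le T}\esp_{z\in\mathcal E}|\mathcal T_{n^{\alpha}t}(n^{\alpha}v,z)-\mathcal T_{\infty}(n^{\alpha}v,z)|$, times the supremum of the unweighted martingale $n^{-\alpha/2}\int_0^t\int_{\mathcal E}\int_0^{\lambda^{mH}_n(v-)}M^{P}(n^{\alpha}\diff v,\diff z,\diff u)$. It then applies Doob's inequality. This factorisation is not valid, for two reasons.
\begin{itemize}
\item A stochastic integral against a compensated (signed) measure is not dominated by the supremum of its integrand times the integral of $1$.
\item $C_n([\delta,T])$ includes the diagonal $v=t$, where $\mathcal T_{n^{\alpha}t}=0\neq\mathcal T_\infty$, so it does not tend to zero.
\end{itemize}
You cannot borrow that shortcut. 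Your fixed-$t$ split into $t-v'\ge\delta$ and $t-v'<\delta$ is the right way to handle the near-diagonal region.

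One small correction to Step~1. The bound (\ref{assm_gamma_n_bound}) integrates $\ol{\Gamma}_n$ in its first argument, so it does not give $\sup_n\sup_{s\le T}\E\lambda^{mH}_n(s)<\infty$. What you actually use is $\limsup_n\int_0^T\E\lambda^{mH}_n(s)\diff s<\infty$ and $\lim_{\delta\downarrow0}\limsup_n\sup_{t\le T}\int_{(t-\delta)^+}^{t}\E\lambda^{mH}_n(s)\diff s=0$. Both follow from Lemma~\ref{lemma_intensity_LLN_expected}.

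The genuine gap is in Step~2. Both pieces of $\Upsilon_{n,t}-\Upsilon_{n,t_k}$ are still stochastic integrals whose integrand depends on the running time $t$. The new-arrivals piece has integrand $\mathcal T_{n^{\alpha}t}$, not $\mathcal T_\infty$, so it is not a martingale in $t$ and cannot be handled like $\widetilde\Upsilon$. The other piece has integrand $\mathcal T_{n^{\alpha}t}-\mathcal T_{n^{\alpha}t_k}$. You are therefore back at the original problem on a shorter interval.

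Monotonicity only gives first-order bounds. For the new-arrivals piece this is $O(n^{\alpha/2}h)$ after normalisation, which diverges for fixed $h$. For the other piece you would need the $\diff v$-integrated size of $\mathcal T_{n^{\alpha}t_{k+1}}-\mathcal T_{n^{\alpha}t_k}$ to be $o(n^{-\alpha/2})$. Neither Lemma~\ref{lemma_tau_convergence_tau_infty} nor the assumptions give such a rate, and the loss is real. Take the unmarked stationary kernel $\phi(u)=c(1+u)^{-1-\gamma}$ from Example~\ref{example_phi_nonscaled}, with $0<\gamma<1/2$, $c<\gamma$, $\alpha=1$ and constant base intensity $\mu$. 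Since $\Gamma\ge\phi$ and $\lambda\ge\mu$, the expected Poisson part of your bound on any block with $t_k\ge h$ is at least $\mu\int_0^{nh}u\,\phi(u)\diff u\asymp(nh)^{1-\gamma}$. This is not $o(n^{1/2})$. ``Centring and a second-moment estimate'' controls these pieces only at fixed $t$, which you already have from Step~1.

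The missing idea is to remove the $t$-dependence of the integrand \emph{before} taking the supremum, by stochastic Fubini or integration by parts. In the unmarked convolution case, $\mathcal T_{nt}(v)=\int_0^{nt-v}\Gamma$. With $\bar\Gamma(x)=\int_x^\infty\Gamma$ and $M(t)=N(t)-\int_0^t\lambda$, one has $D_n(t)=-\int_0^{nt}\bar\Gamma(nt-v)\diff M(v)$ and
\begin{equation*}
\int_0^{nt}\bar\Gamma(nt-v)\diff M(v)=\int_0^{nt}\Gamma(nt-v)\bigl(M(nt)-M(v)\bigr)\diff v+\bar\Gamma(nt)M(nt).
\end{equation*}
The supremum over $t\le T$ is then controlled by two quantities: the modulus of continuity, at a scale $\eta$, of the C-tight process $n^{-1/2}M(n\,\cdot)$, and $\bar\Gamma(n\eta)\sup_{s\le T}n^{-1/2}|M(ns)|$. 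No rate is needed.

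In the marked, non-convolution setting you must supply the analogue. Use $\mathcal T_{n^{\alpha}t}(v,z)=\int_v^{n^{\alpha}t}\mathcal L_s(v,z)\diff s$ to express $D_n$ through deterministic kernels acting on increments of the martingales $r\mapsto\int_0^r\int_{\mathcal E}\int_0^{\lambda^{mH}(v-)}g(z)M^{P}(\diff v,\diff z,\diff u)$. You then need modulus-of-continuity control that is uniform over the weights $g$ that occur.

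For the $\widetilde\Upsilon$ oscillations, C-tightness of $n^{-\alpha/2}\widetilde\Upsilon_{n,\cdot}$ is simpler than your fourth-moment route, which would need second moments of Hawkes counts that are not established here. Its predictable quadratic variation is at most a constant times $\int_0^t\lambda^{mH}_n(s)\diff s$, which is nondecreasing in $t$ and converges pointwise.
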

    As a consequence of Proposition \ref{prop_upsilon_tilde_upsilon_L_2}, the rest of the following convergences in this section will deal with the latter process $\{\widetilde{\Upsilon}_{n,t}(f), t\geq 0\}$. The next two results demonstrate the martingale property of $\{\widetilde{\Upsilon}_{n,t}(f), t\geq 0\}$ and its convergence, which play the key role in order to conclude the main convergence results in the later sections. The proofs of Lemma \ref{lemma_martingale_upsilon_{n}(t)} and Propositions \ref{prop_upsilon_tilde_upsilon_L_2},  \ref{prop_FLLN_Upsilon_{n}(t)} are given in Section \ref{section_proofs of auxiliary results} in detail.

    \begin{lemma}\label{lemma_martingale_upsilon_{n}(t)}
	Suppose Assumptions \ref{assm_kernel_locally_bounded}, \ref{assm_phi_conv_LLN} are satisfied. Then for every $\mathscr{B}_{\mathcal E}$-measurable function $f,g \in \mathbb{C}_{b}(\mathcal E, \mathbb{R})$, the integral process $\{\widetilde{\Upsilon}_{n,t}(f), t\geq 0\}$ is an $\mathscr{F}_{n}$-local square-integrable martingale with quadratic covariation process 
	$\{  \langle \widetilde{\Upsilon}_{n,.}(f), \widetilde{\Upsilon}_{n,.}(g) \rangle (t), t\geq 0\}$, where 
	\begin{equation*}
	   \langle \widetilde{\Upsilon}_{n,.}(f), \widetilde{\Upsilon}_{n,.}(g) \rangle (t)=
	   \int_0^{n^{\alpha}t} \int_{\mathcal E}  f(z) g(z) \mathcal{T}_{\infty}^{2}(v,z) \lambda^{mH}(v)  \pi_v(\diff z) \diff v, \q t\geq 0.
	    \end{equation*} 
	\end{lemma}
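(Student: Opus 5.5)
The plan is to treat $\widetilde{\Upsilon}_{n,t}(f)$ as a stochastic integral of a deterministic, time-independent integrand against the compensated Poisson measure $M^{P}$. Then the general martingale theory for such integrals (Lemma \ref{lemma_N^{mH}_martingale}, in the form of Ikeda--Watanabe) gives the local martingale property and the predictable covariation. Write
\begin{equation*}
H(v,z,u):=f(z)\,\mathcal{T}_{\infty}(v,z)\,\bm{1}\big(u\le \lambda^{mH}(v-)\big),
\end{equation*}
so that $\widetilde{\Upsilon}_{n,t}(f)=\int_0^{n^{\alpha}t}\int_{\mathcal E}\int_{\mathbb{R}_+}H\,\diff M^{P}$. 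The process $\lambda^{mH}(v-)$ is left-continuous and adapted, hence predictable. The functions $f$ and $\mathcal{T}_{\infty}(v,z)=\Phi(z)/(1-\Psi(v))$ are deterministic and measurable, so $H$ is predictable. Moreover, $\mathcal{T}_{\infty}$ does not depend on the upper limit $t$, and the time change $t\mapsto n^{\alpha}t$ is deterministic and increasing. Therefore the time-changed integral is adapted to $\mathscr{F}_n(t)=\mathscr{F}(n^{\alpha}t)$, and martingality transfers directly.

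Next I will bound the integrand. By Assumption \ref{assm_kernel_locally_bounded} and (\ref{assm_phi_n_convergence_Phi}), $\Phi$ is essentially bounded, say $|\Phi|\le K$. By the stability condition (\ref{psi_stability_LLN}), $1-\Psi(v)\ge c_T>0$ on $[0,n^{\alpha}T]$; I will use the subcritical bound (\ref{psi_subcritical_condition}) so that this holds uniformly. Since $f,g\in\mathbb{C}_b$, this gives $|H|^2\le C\,\bm{1}(u\le\lambda^{mH}(v-))$. Hence
\begin{equation*}
\E\int_0^{n^{\alpha}T}\int_{\mathcal E}\int_{\mathbb{R}_+}|H|^2\,\diff v\,\pi_v(\diff z)\,\diff u\le C\int_0^{n^{\alpha}T}\E\,\lambda^{mH}(v)\,\diff v<\infty,
\end{equation*}
and the last quantity is finite by Lemma \ref{lemma_bound of E lambda^{mH}(t)}. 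This is the $L^2$ condition needed for the integral to be a square-integrable martingale. To be safe, I will localize with $\sigma_k=\inf\{t:\lambda^{mH}(t)\ge k\}$, which gives the local statement even without the expectation bound.

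The covariation follows from polarization, using the standard isometry for compensated Poisson integrals. For predictable $H_1,H_2$ with the $L^2$ bound, the predictable covariation of the two integrals is $\int H_1H_2\,\diff v\,\pi_v(\diff z)\,\diff u$. With $H_1$ carrying $f$ and $H_2$ carrying $g$, the product contains $\bm{1}(u\le\lambda^{mH}(v-))^2$. Integrating in $u$ yields $\lambda^{mH}(v-)$, and this equals $\lambda^{mH}(v)$ for a.e.\ $v$ because there are only countably many jumps. This gives exactly the stated formula after the time change.

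The main obstacle I expect is the uniform lower bound on $1-\Psi(v)$ over the growing interval $[0,n^{\alpha}t]$. Note that $\Psi$ is defined through the limit $\ol{\pi}_v$ and is only controlled on compacts by (\ref{psi_stability_LLN}). I will try first to argue that the supremum bound holds for all $v$ via (\ref{psi_subcritical_condition}). Failing that, I will rely on the localization $\sigma_k$, since local square integrability only needs finiteness on each bounded interval.
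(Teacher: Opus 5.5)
Your proposal is correct and follows essentially the paper's route. The paper invokes Br\'emaud's T8($\beta$) after checking that $\mathcal{T}_{\infty}$ is locally bounded and that $\E\int_0^{n^{\alpha}t}\int_{\mathcal E}|f(z)\mathcal{T}_{\infty}(v,z)|\lambda^{mH}(v)\pi_v(\diff z)\diff v<\infty$ (via the bound on $\E(\lambda^{mH})$), and then reads off the covariation from the compensator $\lambda^{mH}(v)\pi_v(\diff z)\diff v$ just as you do.

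The obstacle you anticipate does not actually arise. For fixed $n$ and $t$ the interval $[0,n^{\alpha}t]$ is compact, so (\ref{psi_stability_LLN}) with $T=n^{\alpha}t$ already bounds $1/(1-\Psi(v))$ there. Note also that your fallback localization in $\lambda^{mH}$ could not have controlled this deterministic factor anyway.
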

    The martingale nature of Lemma \ref{lemma_martingale_upsilon_{n}(t)} leads the convergence of $\{\widetilde{\Upsilon}_{n}(t), t\geq 0\}$ process defined by 
    \begin{equation*}
     \widetilde{\Upsilon}_{n}(t):= \widetilde{\Upsilon}_{n,t}(\bm{1}) =\int_0^{n^{\alpha}t} \int_{\mathcal E} \int_0^{\lambda^{mH}(v-)}  \mathcal{T}_{\infty}(v,z) M^{P}(\diff v, \diff z, \diff u) , \q t\geq 0.
     \end{equation*}
     The next convergence in Proposition \ref{prop_FLLN_Upsilon_{n}(t)} proves the functional laws of large number result for $\{\widetilde{\Upsilon}_{n}(t), t\geq 0\}$.
    \begin{proposition}\label{prop_FLLN_Upsilon_{n}(t)}
	     Let the assumptions of Lemma \ref{lemma_martingale_upsilon_{n}(t)} hold. Then, for any $\eta>0$,  the process $\{ \frac{1}{n^{\alpha/2 +\eta}}  \widetilde{\Upsilon}_{n}(t), t\geq 0\}$ trivially converges to zero in probability uniformly on compact sets, i.e., for every $\epsilon>0$ and $T>0$, 
 	\begin{equation*}
	\lim_{n\ra \infty} \prob  \left( \sup_{0\leq t\leq T} \frac{1}{n^{\alpha/2 +\eta}} \left| \widetilde{\Upsilon}_{n}(t)\right|>\epsilon   \right)=0 .
	\end{equation*}
    \end{proposition}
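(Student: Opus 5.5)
The plan is to use Doob's $\mathbb{L}^2$ maximal inequality for the martingale of Lemma \ref{lemma_martingale_upsilon_{n}(t)}. Since $\{\widetilde{\Upsilon}_{n}(t), t\geq 0\}$ is an $\mathscr{F}_{n}$-local square-integrable martingale, Chebyshev's inequality together with Doob's inequality (localized if necessary, then passed to the limit by Fatou/monotone convergence) gives
\begin{equation*}
\prob\left(\sup_{0\leq t\leq T}\frac{1}{n^{\alpha/2+\eta}}\left|\widetilde{\Upsilon}_{n}(t)\right|>\epsilon\right)\leq \frac{4}{\epsilon^{2} n^{\alpha+2\eta}}\,\E\left(\langle \widetilde{\Upsilon}_{n,.}(\bm{1})\rangle (T)\right).
\end{equation*}
It therefore suffices to show that $\E(\langle \widetilde{\Upsilon}_{n,.}(\bm{1})\rangle (T))=\mathcal{O}(n^{\alpha})$. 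The extra factor $n^{-2\eta}$ then drives the bound to zero.

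\textbf{Bound on the quadratic variation.} By Lemma \ref{lemma_martingale_upsilon_{n}(t)} with $f=g=\bm{1}$,
\begin{equation*}
\E\left(\langle \widetilde{\Upsilon}_{n,.}(\bm{1})\rangle (T)\right)=\int_0^{n^{\alpha}T}\int_{\mathcal E}\mathcal{T}_{\infty}^{2}(v,z)\,\E\left(\lambda^{mH}(v)\right)\pi_{v}(\diff z)\diff v,
\end{equation*}
where Fubini is used since the integrand is non-negative. First, $\mathcal{T}_{\infty}(v,z)=\Phi(z)/(1-\Psi(v))$ is bounded uniformly in $z$ and in $v\in[0,T]$. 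This follows because $\Phi$ is essentially bounded: it is a limit of $\bar{\Phi}(n^{\beta}z)$, which is uniformly bounded by (\ref{assm_phi_integral_exist}). It also uses the stability condition (\ref{psi_stability_LLN}), $\sup_{v\leq T}\Psi(v)<1$. Note that the argument of $\mathcal{T}_{\infty}$ is the unscaled variable $v$, which ranges over $[0,n^{\alpha}T]$. I will therefore either interpret the stability bound as uniform in $v$, which is what (\ref{psi_subcritical_condition}) suggests, or substitute $v=n^{\alpha}s$ and read $\mathcal{T}_{\infty}$ in the scaled variable consistently with (\ref{def_tau_t(v,z)_scaled_new}). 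Either way, $\|\mathcal{T}_{\infty}\|^{2}\le K_T$.

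Next, substituting $v=n^{\alpha}s$ gives
\begin{equation*}
\int_0^{n^{\alpha}T}\E(\lambda^{mH}(v))\diff v=n^{\alpha}\int_0^{T}\E(\lambda^{mH}_{n}(s))\diff s.
\end{equation*}
By Lemma \ref{lemma_intensity_LLN_expected}, the right-hand integral converges uniformly to $\int_0^T C^{mH}(s)\diff s<\infty$, so it is bounded in $n$. Hence $\E(\langle\cdot\rangle(T))\le K_T\,n^{\alpha}\sup_n\int_0^T\E(\lambda^{mH}_n(s))\diff s=\mathcal{O}(n^{\alpha})$. The probability is therefore at most $C\epsilon^{-2}n^{-2\eta}\to0$.

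\textbf{Main obstacle.} The delicate point is the uniform boundedness of $\mathcal{T}_{\infty}$ over the growing horizon $[0,n^{\alpha}T]$, that is, the requirement that $1-\Psi$ stays bounded away from zero. This requires (\ref{psi_stability_LLN}) uniformly, and I will justify it from the subcritical condition (\ref{psi_subcritical_condition}). A secondary point is the localization needed to pass from a local martingale to Doob's inequality. I will handle it with stopping times $\sigma_k\uparrow\infty$ and monotone convergence, using that the expected bracket is finite by Lemma \ref{lemma_bound of E lambda^{mH}(t)}.
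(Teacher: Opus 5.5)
Your proposal is correct and follows essentially the paper's argument. Both control the supremum of the martingale $\widetilde{\Upsilon}_{n}$ through its predictable quadratic variation: you use Doob's $\mathbb{L}^2$ inequality with Chebyshev, the paper uses the Lenglart--Rebolledo inequality with Markov's inequality, and both show $\E\big(\langle \widetilde{\Upsilon}_{n}\rangle(T)\big)=\mathcal{O}(n^{\alpha})$ from the boundedness of $\mathcal{T}_{\infty}$ and of the expected intensity. Two of your steps are more careful than the paper's appeal to the merely local bound of Lemma \ref{lemma_bound of E lambda^{mH}(t)}: bounding $\int_0^{n^{\alpha}T}\E(\lambda^{mH}(v))\diff v$ on the growing horizon via Lemma \ref{lemma_intensity_LLN_expected} after substituting $v=n^{\alpha}s$, and keeping $1-\Psi$ away from zero uniformly via the subcritical condition (\ref{psi_subcritical_condition}).
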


In the end, knowing the functional convergence of $\{\widetilde{\Upsilon}_{n}(t), t\geq 0\}$ in Propositions \ref{prop_upsilon_tilde_upsilon_L_2} and \ref{prop_FLLN_Upsilon_{n}(t)}, together with Lemma \ref{lemma_intensity_LLN_expected} we get the desired convergence of $\{\int_0^{t} \lambda^{mH}_{n}(s)\diff s, t\geq 0 \}$ to $\{\ol N^{mH}_{n}(t), t\geq 0\}$ in probability uniformly on compact sets in $\md([0,\infty), \mathbb{R})$. This result is stated in the following theorem.
	
	\begin{theorem}\label{thm_intensity_LLN}
		If Assumptions \ref{assm_initial_intensity_bound}, \ref{assm_kernel_locally_bounded} and  \ref{assm_base_intensity_FLLN}, \ref{assm_phi_conv_LLN}, \ref{assm_pi_FLLN} hold, then $\{ \int_0^{t} \lambda_{n}^{mH}(s)\diff s, t\geq 0 \}$ converges to $\{\ol N^{mH}_{n}(t), t\geq 0\}$ in probability uniformly on compact sets in $\md([0,\infty), \mathbb{R})$, i.e., 	
		for any $T>0$ and $\epsilon>0$, 
		\begin{equation*}
		\lim_{n\ra \infty} \prob \left(
		\sup_{0\leq t\leq T} \left| \int_0^t \lambda^{mH}_{n}(s)\diff s - \int_0^t C^{mH}(s) \diff s 
        \right| >\epsilon \right) =0.
		\end{equation*}

	\end{theorem}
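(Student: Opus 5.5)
The plan is to split the error into a deterministic part and a martingale part, using the decomposition (\ref{sum_of_X_{n,1}_X_{n,2}}) together with (\ref{diff_intensity_FLLN}). For every $t\in[0,T]$,
\begin{equation*}
\int_0^t \lambda^{mH}_{n}(s)\diff s - \int_0^t C^{mH}(s)\diff s
= \Big(\int_0^t \E\big(\lambda^{mH}_{n}(s)\big)\diff s - \int_0^t C^{mH}(s)\diff s\Big) + \frac{1}{n^{\alpha}}\Upsilon_{n,t}(\bm{1}).
\end{equation*}
Here $\Upsilon_{n,t}(\bm 1)$ is the integral (\ref{def_Upsilon_{n}(t)}) with $f\equiv 1$, and it is exactly $X_{n,1}(t)+X_{n,2}(t)$. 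The first bracket is deterministic. By Lemma \ref{lemma_intensity_LLN_expected}, its supremum over $[0,T]$ tends to zero. It therefore suffices to show that $\sup_{0\le t\le T} n^{-\alpha}|\Upsilon_{n,t}(\bm 1)|\ra 0$ in probability.

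For the martingale part, I would write
\begin{equation*}
n^{-\alpha}\Upsilon_{n,t}(\bm 1)=n^{-\alpha/2}\Big(n^{-\alpha/2}\big(\Upsilon_{n,t}(\bm 1)-\widetilde\Upsilon_{n,t}(\bm 1)\big)\Big)+n^{-\alpha}\widetilde\Upsilon_{n}(t).
\end{equation*}
By Proposition \ref{prop_upsilon_tilde_upsilon_L_2} with $f\equiv 1$, the inner term $n^{-\alpha/2}\sup_{t\le T}|\Upsilon_{n,t}-\widetilde\Upsilon_{n,t}|$ is tight; in fact it goes to zero in probability. Multiplying by the extra factor $n^{-\alpha/2}\ra 0$, the first term vanishes uniformly in probability. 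For the second term, take $\eta=\alpha/2>0$ in Proposition \ref{prop_FLLN_Upsilon_{n}(t)}. Then $\sup_{t\le T} n^{-\alpha}|\widetilde\Upsilon_n(t)|\ra 0$ in probability. A triangle inequality and a union bound with $\epsilon/3$ for each of the three pieces then give the claim.

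If the auxiliary propositions need an explicit justification at this scale, I would fall back on Doob's $L^2$ inequality applied to the martingale $\widetilde\Upsilon_n$ of Lemma \ref{lemma_martingale_upsilon_{n}(t)}:
\begin{equation*}
\E\sup_{t\le T}|\widetilde\Upsilon_n(t)|^2\le 4\,\E\langle\widetilde\Upsilon_n\rangle(T)=4\int_0^{n^\alpha T}\!\!\int_{\mathcal E}\mathcal T_\infty^2(v,z)\,\E\big(\lambda^{mH}(v)\big)\,\pi_v(\diff z)\diff v .
\end{equation*}
Since $\mathcal T_\infty$ is essentially bounded, which follows from (\ref{psi_stability_LLN}) and the bound on $\Phi$, and since the change of variables gives $\int_0^{n^\alpha T}\E\lambda^{mH}(v)\diff v = n^\alpha\int_0^T\E\lambda_n^{mH}(s)\diff s = \mathcal O(n^\alpha)$ by Lemma \ref{lemma_intensity_LLN_expected}, the right-hand side is $\mathcal O(n^\alpha)$. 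Chebyshev's inequality then yields $\prob(\sup_{t\le T}n^{-\alpha}|\widetilde\Upsilon_n(t)|>\epsilon)=\mathcal O(n^{-\alpha})$.

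The main obstacle is the non-martingale difference $\Upsilon_{n,t}-\widetilde\Upsilon_{n,t}$. Its integrand $\mathcal T_{n^\alpha t}(v,z)-\mathcal T_\infty(v,z)$ is small only when $t-v\ge\delta$ (Lemma \ref{lemma_tau_convergence_tau_infty}(ii)), and it depends on $t$, so Doob's inequality cannot be applied directly. I rely on Proposition \ref{prop_upsilon_tilde_upsilon_L_2} to handle it. Should a direct argument be required, I would split the integral at $n^\alpha(t-\delta)$. On the region $t-v\ge\delta$, I would use the uniform smallness of the integrand. On the boundary layer $v\in[n^\alpha(t-\delta),n^\alpha t]$, I would use the uniform bound of Lemma \ref{lemma_tau_convergence_tau_infty}(i), whose second moment is $\mathcal O(n^\alpha\delta)$. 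To control the supremum over $t$, I would use a grid of $t$-values with mesh $\delta$, together with monotonicity-type bounds on the compensator. Since only the coarser scale $n^{-\alpha}$ is needed here, even the crude bound $\E|\Upsilon_{n,t}-\widetilde\Upsilon_{n,t}|^2=\mathcal O(n^\alpha)$ uniformly in $t$, combined with this grid, suffices.
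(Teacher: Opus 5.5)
Your proposal is correct and is essentially the paper's own proof. It uses the same decomposition of $\int_0^t\lambda^{mH}_n(s)\diff s-\int_0^t C^{mH}(s)\diff s$ into $n^{-\alpha}\Upsilon_{n,t}(\bm 1)$ plus a deterministic error, handles the error by Lemma~\ref{lemma_intensity_LLN_expected}, and handles the martingale term by Propositions~\ref{prop_upsilon_tilde_upsilon_L_2} and~\ref{prop_FLLN_Upsilon_{n}(t)} with $\eta=\alpha/2$. Your fallback is also sound and more self-contained: the uniform bound of Lemma~\ref{lemma_tau_convergence_tau_infty}(i) with the fixed-$t$ isometry gives $\E|n^{-\alpha}\Upsilon_{n,t}(\bm 1)|^2=\mathcal O(n^{-\alpha})$, and monotonicity of $t\mapsto\int_0^t\lambda^{mH}_n(s)\diff s$ plus continuity of the limit turns grid-wise convergence into uniform convergence, so it avoids Proposition~\ref{prop_upsilon_tilde_upsilon_L_2}, whose paper proof pulls a supremum of the integrand outside a stochastic integral.
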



Now, we state the functional law of large numbers result for marked Hawkes process under multi- scaling regime based on the previous convergences of Lemmas \ref{lemma_intensity_LLN_expected}, Proposition \ref{prop_FLLN_Upsilon_{n}(t)}. We provide the proofs of the following Theorem \ref{thm_LLN_N^{mH}(t)}, \ref{thm_LLN_Y^{mH}(t)} in Section \ref{section_FLLN_proofs} in detail. 

\begin{theorem}\label{thm_LLN_N^{mH}(t)}
	Suppose the assumptions of Theorem \ref{thm_intensity_LLN} are satisfied. Then, the rescaled process $\{ \ol{N}^{mH}_{n}(t), t\geq 0\}$ converges in probability to the deterministic process $\{\ol{N}^{mH}(t), t\geq 0\}$ uniformly on compact sets as $n\ra \infty$, i.e. 
	for every $T>0$ and $\epsilon>0$,
    \begin{equation*}
	\lim_{n\ra \infty} \prob \left(\sup_{0\leq t \leq T}
	\left| \ol{ N} ^{mH}_{n}(t) - \ol{N}^{mH}(t) \right|>\epsilon  \right)=0.
	\end{equation*}

\end{theorem}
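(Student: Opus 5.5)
The plan is to split $\ol N^{mH}_n$ into a martingale part and a compensator part and apply Theorem \ref{thm_intensity_LLN} to the compensator. By Lemma \ref{lemma_N^{mH}_martingale} with $C\equiv 1$, after the time change $s\mapsto n^{\alpha}s$, the process
\begin{equation*}
M_n(t):=N^{mH}_{n}(t)-n^{\alpha}\int_0^t \lambda^{mH}_{n}(s)\diff s=\int_0^{n^{\alpha}t}\int_{\mathcal E} M^{mH}(\diff s,\diff z)
\end{equation*}
is an $\mathscr{F}_n$-local martingale. This gives the decomposition
\begin{equation*}
\ol N^{mH}_n(t)-\ol N^{mH}(t)=\frac{1}{n^{\alpha}}M_n(t)+\Big(\int_0^t\lambda^{mH}_n(s)\diff s-\int_0^t C^{mH}(s)\diff s\Big).
\end{equation*}
The second term tends to zero in probability uniformly on $[0,T]$ by Theorem \ref{thm_intensity_LLN}. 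It therefore remains to prove $\sup_{0\le t\le T}n^{-\alpha}|M_n(t)|\to 0$ in probability.

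For the martingale term, I would first localise with $\sigma_{n,K}=\inf\{t:\int_0^t\lambda^{mH}_n(s)\diff s\ge K\}$. The predictable quadratic variation of $M_n$ equals $n^{\alpha}\int_0^t\lambda^{mH}_n(s)\diff s$, because the jumps have size one and the compensator is continuous. Doob's inequality then gives
\begin{equation*}
\E\Big(\sup_{t\le T\wedge\sigma_{n,K}}\frac{|M_n(t)|^2}{n^{2\alpha}}\Big)\le \frac{4}{n^{2\alpha}}\E\Big(n^{\alpha}\int_0^{T\wedge\sigma_{n,K}}\lambda^{mH}_n(s)\diff s\Big)\le \frac{4K}{n^{\alpha}}\to 0 .
\end{equation*}
Localisation may not even be needed: by Lemma \ref{lemma_intensity_LLN_expected}, $\E\int_0^T\lambda^{mH}_n(s)\diff s$ is bounded in $n$, so the bound $4n^{-\alpha}\E\int_0^T\lambda_n^{mH}(s)\diff s\to0$ holds directly. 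That bound requires $M_n$ to be a true square-integrable martingale on $[0,T]$. This follows from integrability of the compensator, which Lemma \ref{lemma_bound of E lambda^{mH}(t)} provides for each fixed $n$ since $n^{\alpha}T$ is finite. If one prefers to keep the localisation, one also needs $\prob(\sigma_{n,K}<T)$ small uniformly in $n$ for large $K$. This is immediate from Theorem \ref{thm_intensity_LLN}, since $\int_0^T\lambda^{mH}_n(s)\diff s$ is tight, converging to the finite constant $\ol N^{mH}(T)$ provided $\ol\mu^{mH}$ is locally integrable and $\sup_v\Psi(v)<1$.

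Combining the two estimates by the triangle inequality yields
\begin{equation*}
\prob\Big(\sup_{t\le T}|\ol N^{mH}_n(t)-\ol N^{mH}(t)|>\epsilon\Big)\le \prob\Big(\sup_{t\le T}\frac{|M_n(t)|}{n^{\alpha}}>\frac{\epsilon}{2}\Big)+\prob\Big(\sup_{t\le T}\Big|\int_0^t\lambda^{mH}_n-\int_0^tC^{mH}\Big|>\frac{\epsilon}{2}\Big)\to0 .
\end{equation*}
The substantive work lies entirely in Theorem \ref{thm_intensity_LLN}, which is assumed. The only delicate point I expect is the bookkeeping of the time scaling. One must check that the compensator of $N^{mH}(n^{\alpha}\cdot)$ is exactly $n^{\alpha}\int_0^t\lambda^{mH}(n^{\alpha}s)\diff s$, which uses $\pi^n_s(\mathcal E)=1$ via the invariance (\ref{invariant_property_mark_space}). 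One must also check the uniform-in-$n$ bound on the expected compensator, which Lemma \ref{lemma_intensity_LLN_expected} supplies.
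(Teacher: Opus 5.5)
Your proposal is correct and follows essentially the same route as the paper's proof. Both split $\ol N^{mH}_n-\ol N^{mH}$ into the compensated-measure martingale plus $\int_0^t(\lambda^{mH}_n(s)-C^{mH}(s))\diff s$, dispatch the latter by Theorem \ref{thm_intensity_LLN}, and kill the martingale through its predictable quadratic variation $n^{-\alpha}\int_0^T\lambda^{mH}_n(s)\diff s$. The differences are minor: you work with $f\equiv 1$ from the start, so the paper's third term involving $\pi^n_s-\ol\pi_s$ never appears, and you use Doob's $L^2$ inequality with the uniform bound from Lemma \ref{lemma_intensity_LLN_expected} where the paper uses the Lenglart--Rebolledo inequality.
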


\begin{remark}\label{remark_LLN_i.i.d_mark}
	If the mark random sequence $\{\xi_{i}(\tau_{i}), i\in \mathbb{N}\}$ are i.i.d., then in view of Remark \ref{remark_iid of mark sequence}  $\psi(t,s)=\psi(t-s)$ and $\Gamma (t,s)=\Gamma(t-s)$, (see \cite{horst2021functional}. Therefore, the limiting process $\{\ol{N}^{mH}(t), t\geq 0\}$ turns out to be 
	\begin{equation*}
	\overline{N}^{mH}(t)=\int_0^t \mu^{mH}(s) \diff s+  \int_0^t \Gamma \star \mu^{mH} (s) \diff s.
	\end{equation*}
In addition, if $\mu^{mH}(t)=\mu^{mH}$, under i.i.d. framework, the resulting process of functional laws of large number yields 
\begin{equation*}
\overline{N}^{mH}(t)= \mu^{mH} t(1+||\Gamma||_{\mathbb{L}^{1}})= 	\frac{\mu^{mH} t}{1-||\psi||_{\mathbb{L}^{1}}}, 
\end{equation*} 
by Remark \ref{remark_iid of mark sequence}, knowing that $||\psi||_{\mathbb{L}^{1}}<1$.
\end{remark}

\begin{remark}\label{remark_LLN_simple Hawkes}
	For simple Hawkes process, the kernel function is of the form $\phi(t-s,z)=\phi(t-s)$, which implies $\psi(t-s)=\phi(t-s)$. If the base intensity function is assumed to be constant, i.e.,  $\mu^{mH}(t)=\mu^{mH}$, then
	\begin{equation*}
	\overline{N}^{mH}(t)=
	\frac{\mu^{mH} t}{1-||\phi||_{\mathbb{L}^{1}}}
	\end{equation*}
	where 
	\begin{equation*}
	\Gamma(t)=\phi \star \Gamma(t)=\Gamma \star \phi (t),
	\end{equation*}
	implying $||\Gamma||_{\mathbb{L}^{1}}=\frac{||\phi||_{\mathbb{L}^{1}}}{1-||\phi||_{\mathbb{L}^{1}}}$ by Remark \ref{remark_iid of mark sequence}. 
\end{remark}

Now, based on the deterministic approximations of the marked Hawkes process by Theorem \ref{thm_LLN_N^{mH}(t)}, we can obtain the functional laws of large numbers for the additive functional process $\{Y^{mH}_{n}(t), t\geq 0\}$ knowing the deterministic convergence 
of $\theta_{n}(t,z)=\theta(n^{\alpha}t, n^{\beta}z)$, with $\esp_{z\in \mathcal{E}} \theta_{n}(t,z)<\infty$, which is often referred to as the shot noise process, see \cite{horst2021functional}, \cite{pang2018functional}etc. To prove the following functional convergence, we require the following condition for the associated kernel function $\theta_{n}(t,z)$.
	\begin{assumption}\label{assm_theta_FLLN}
	There exists a locally bounded function $\ol{\theta}(.,.)$ on $\mathbb{R}_{+}\times \mathcal E$ such that 
    \begin{equation}
     	\sup_{0\leq t \leq T} \esp_{z\in \mathcal{E}}\left|  \theta_{n}(t, z) -  \ol{\theta}(t,z)\right| \ra  0, \q \xt{as} \q n\ra \infty.   
    \end{equation}
\end{assumption}
Here, Assumption \ref{assm_theta_FLLN} via Assumption \ref{assm_pi_FLLN} ensures that  
	\begin{equation}
	\sup_{0\leq t \leq T} \left| \int_0^t \int_{\mathcal{E}}  \theta_{n}(t-s, z) \pi_{s}^{n}(\diff z)  \diff s - \int_0^t \int_{\mathcal{E}} \ol{\theta}(t-s,z) \ol{\pi}_{s}(\diff z) \diff s\right| \ra  0, \q \xt{as} \q n\ra \infty.
	\end{equation}
Also
\begin{equation}
    \psi_{\theta} (n^{\alpha} t,n^{\alpha} s)= \int_{\mathcal{E}}  \theta_{n}(t-s, z) \pi_{s}^{n}(\diff z)  < \infty,  \quad 0 \leq s \leq t ,
\end{equation}
where $\psi_{\theta}(t,s) = \int_{\mathcal{E}}  \theta (t-s, z) \pi_{s}(\diff z)$ is the weighted kernel associated with $\theta(t,z)$.
\begin{theorem}\label{thm_LLN_Y^{mH}(t)}
	Suppose assumptions of Theorem \ref{thm_intensity_LLN} are satisfied. In addition, the convergence of Assumption \ref{assm_theta_FLLN} holds. Then the rescaled shot noise process $\{\ol{Y}^{mH}_{n}(t), t\geq 0\}$ converges to $\{\ol{Y}^{mH}(t), t\geq 0\}$ in probability uniformly on compact sets as $n\ra \infty$,	i.e., for every $T>0$ and $\epsilon>0$,
	\begin{equation}
	\lim_{n\ra \infty}\prob \left( \sup_{0\leq t \leq T} 
	\left|\ol{Y}^{mH}_{n}(t)- \overline {Y}^{mH}(t)\right| >\epsilon \right ) =0,
	\end{equation}
	where $\overline{Y}^{mH}(t)=
	\int_0^t \int_{\mathcal{E}} \ol{\theta}(t-s, z) C^{mH}(s)
    \ol{\pi}_{s}(\diff z) \diff s$.
\end{theorem}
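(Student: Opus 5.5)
We prove Theorem \ref{thm_LLN_Y^{mH}(t)} by splitting $\ol{Y}^{mH}_{n}(t)$ into a martingale part and a compensator part, and treating each with the tools already established for $\ol{N}^{mH}_{n}$.

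\emph{Decomposition.} Using (\ref{def_Y^{mH}_{n}(t)}) and the compensated measure of $N^{mH}_{n}$, write
\begin{align*}
\ol{Y}^{mH}_{n}(t) = \frac{1}{n^{\alpha}}\int_0^t\int_{\mathcal E}\theta_n(t-s,z)\, M^{mH}_n(\diff s,\diff z) + \int_0^t\int_{\mathcal E}\theta_n(t-s,z)\,\lambda^{mH}_n(s)\,\pi^n_s(\diff z)\,\diff s =: \mathcal{M}_n(t)+\mathcal{A}_n(t),
\end{align*}
where the factor $n^{\alpha}$ in the compensator $\lambda^{mH}(n^\alpha s-)n^\alpha \diff s\,\pi^n_s(\diff z)$ cancels the prefactor $1/n^{\alpha}$.

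\emph{Step 1: the noise term $\mathcal{M}_n$.} The integrand depends on $t$, so $\mathcal{M}_n$ is not a martingale in $t$. To get uniformity in $t$, I would argue as follows. If $\theta$ is separable or monotone in its first argument, split $\theta_n(t-s,z)=\ol\theta(t-s,z)+(\theta_n-\ol\theta)(t-s,z)$. The error part is bounded by $\sup_{T,z}|\theta_n-\ol\theta|$ times $\frac{1}{n^\alpha}\int_0^T\!\int (N^{mH}_n+\text{compensator})$, which is $o(1)\cdot O_P(1)$ by Assumption \ref{assm_theta_FLLN} and Theorems \ref{thm_intensity_LLN} and \ref{thm_LLN_N^{mH}(t)}. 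For the $\ol\theta$ part, fix $t$ and use the second-moment bound
\begin{equation*}
\E|\mathcal{M}_n(t)|^2 \le n^{-\alpha}\|\ol\theta\|^2_{T,\mathcal E}\int_0^T \E\lambda^{mH}_n(s)\,\diff s = O(n^{-\alpha}),
\end{equation*}
which follows from Lemma \ref{lemma_intensity_LLN_expected}. This gives convergence at each fixed $t$. Uniformity over $[0,T]$ then comes from a partition $0=t_0<\dots<t_K=T$, controlling the oscillation between grid points through the local boundedness of $\ol\theta$ and the tightness of $\ol{N}^{mH}_n$. This uniformity step is the main obstacle. If $\ol\theta$ is not continuous in $t$, I would first approximate it by continuous functions in the sup norm, with the error again absorbed by $\ol{N}^{mH}_n(T)=O_P(1)$.

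\emph{Step 2: the compensator $\mathcal{A}_n$.} Write $\mathcal{A}_n(t)=\int_0^t \Theta_n(t,s)\,\lambda^{mH}_n(s)\,\diff s$ with $\Theta_n(t,s)=\int_{\mathcal E}\theta_n(t-s,z)\,\pi^n_s(\diff z)$. By Assumptions \ref{assm_theta_FLLN} and \ref{assm_pi_FLLN}, $\Theta_n\to\ol\Theta(t,s):=\int_{\mathcal E}\ol\theta(t-s,z)\,\ol\pi_s(\diff z)$ uniformly on $0\le s\le t\le T$. Replacing $\Theta_n$ by $\ol\Theta$ therefore costs at most $\sup|\Theta_n-\ol\Theta|\int_0^T\lambda^{mH}_n=o_P(1)$. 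The remaining term is an integral of the bounded kernel $\ol\Theta(t,\cdot)$ against the measure $\diff\Lambda_n(s)=\lambda^{mH}_n(s)\,\diff s$. By Theorem \ref{thm_intensity_LLN}, $\Lambda_n\to\ol{N}^{mH}$ uniformly in probability, and the limit is absolutely continuous. Hence $\int_0^t \ol\Theta(t,s)\,\diff\Lambda_n(s)\to\int_0^t\ol\Theta(t,s)\,C^{mH}(s)\,\diff s=\ol{Y}^{mH}(t)$. To make this uniform in $t$, I would approximate $\ol\Theta$ by step functions in $s$ (for continuous $\ol\theta$) and then integrate by parts.

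Combining Steps 1 and 2 gives the claim.
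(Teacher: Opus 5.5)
Your decomposition is the paper's. It splits $\ol{Y}^{mH}_n(t)-\ol{Y}^{mH}(t)$ into three pieces, which your Step 2 simply rearranges:
the noise term $\frac{1}{n^{\alpha}}\int_0^t\int_{\mathcal E}\theta_n(t-s,z)M^{mH}_n(\diff s,\diff z)$;
the term $\int_0^t\int_{\mathcal E}\theta_n(t-s,z)(\lambda^{mH}_n(s)-C^{mH}(s))\pi^n_s(\diff z)\diff s$;
and a deterministic kernel/mark term.

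Where you genuinely differ is the noise term. The paper declares it an $\mathscr{F}_n$-martingale in $t$, with bracket $\frac{1}{n^{\alpha}}\int_0^t\int_{\mathcal E}\theta_n^2(t-s,z)\lambda^{mH}_n(s)\pi^n_s(\diff z)\diff s$, and applies Lenglart--Rebolledo. That is not justified, because the integrand depends on the upper limit $t$; for $\theta(u,z)=e^{-u}$ the process is of Ornstein--Uhlenbeck type, with non-zero drift. You correctly avoid this. Your reduction is the right way to get uniformity from results already in the paper: replace $\theta_n$ by $\ol{\theta}$ using Assumption~\ref{assm_theta_FLLN} and the $O_P(1)$ total mass of $n^{-\alpha}N^{mH}_n$ and of its compensator, then use a fixed-$t$ $L^2$ bound, then a grid.

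The gap is in the grid step. For $t\in[t_k,t_{k+1}]$ the oscillation has two pieces.

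\emph{New arrivals in $(t_k,t]$.} These are controlled by boundedness of $\ol{\theta}$ and the convergence of $\ol{N}^{mH}_n$ to a continuous limit, as you say.

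\emph{Old arrivals.} These contribute $\frac{1}{n^{\alpha}}\int_0^{t_k}\int_{\mathcal E}\big(\ol{\theta}(t-s,z)-\ol{\theta}(t_k-s,z)\big)M^{mH}_n(\diff s,\diff z)$. You must bound this pathwise, since the sup over $t$ sits inside. That requires a modulus of continuity of $\ol{\theta}$ in its time argument, uniform in $z$, at least when $t_k-s$ is bounded away from $0$.

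Local boundedness, which is all Assumption~\ref{assm_theta_FLLN} provides, does not give this. Your fallback is also impossible: a uniform limit of continuous functions is continuous, so a discontinuous $\ol{\theta}$ has no continuous sup-norm approximants.

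Step 2 hides the same requirement. Passing from uniform convergence of $\int_0^t\lambda^{mH}_n$ to uniform convergence of $\int_0^t\ol{\Theta}(t,s)\lambda^{mH}_n(s)\diff s$ needs the family $\{\ol{\Theta}(t,\cdot)\}_{t\le T}$ to be regular in $s$. That in turn needs some regularity of $s\mapsto\ol{\pi}_s$. The paper passes over this point too. Its bound of the second term by $\int_0^t\sup_{z}|\theta_n(t-s,z)|\,(\lambda^{mH}_n(s)-C^{mH}(s))\diff s$ is not valid, because the factor $\lambda^{mH}_n-C^{mH}$ is signed.

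So as written you prove the theorem only for $\ol{\theta}$ continuous in $t$. To handle discontinuous kernels, use monotonicity instead of approximation. If $\ol{\theta}(\cdot,z)$ is monotone, positivity of $N^{mH}_n$ and of its compensator lets you sandwich the process on each $[t_k,t_{k+1}]$ between finitely many quantities that converge pointwise. Continuity of the limit then gives uniformity, and bounded-variation kernels follow by linearity. This is presumably what your ``separable or monotone'' clause was reaching for. As it stands it introduces a hypothesis that is never used, so either develop it along these lines or drop it.
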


%


\subsection{Functional Central Limit Theorems}\label{section_FCLT_theorems}
In this section, we focus on the functional central limit theorems for marked Hawkes process, additive functional process driven by the marked Hawkes measure under the asymptotic regime, which are built on the diffusion approximations of the intensity process. In particular, we demonstrate the Gaussian approximations of the cumulative intensity process in Theorem \ref{thm_FCLT_intensity_process} after centering and normalized the scaled process $\{\int_0^t \lambda ^{mH}_{n}\diff s, t\geq0\}$ under appropriate assumptions. The next two Theorems \ref{thm_FCLT_N^{mH}(t)} and \ref{thm_FCLT_Y_n(t)_process} shows the weak convergence of $\{N^{mH}_{n}(t), t\geq 0\}$ and $\{Y^{mH}_{n}(t), t\geq 0\}$ with trajectories in $\md([0, \infty), \mathbb{R})$. The complete proofs of this section is provided in Section \ref{section_FCLT_proofs}.


Define the diffusion scaled processes 
\begin{align}
\nonumber \widehat{N}^{mH}_{n}(t)& =N^{1-\delta}\left(\ol{N}^{mH}_{n}(t)-\overline{N}^{mH}(t)\right) , \\
\widehat{Y}_{n}^{mH}(t) & =N^{1-\delta}\left(\ol{Y}^{mH}_{n}(t)-\overline{Y}^{mH}(t)\right) ,\q 1/2 \leq \delta <1,
 \end{align}
and the diffusion scaled cumulative intensity process by 
\begin{equation}\label{def_Z_mH_diffusion}
\widehat{Z}^{mH}_{n}(t)= n^{\alpha/2} \left( \int_0^t
\lambda_{n}^{mH}(s)\diff s-\int_{0}^{t}C^{mH}(s) \diff s\right) , \q t\geq 0.
\end{equation}

The functional central limit theorems for these diffusion scaled processes are stated below. They are established using second-order deterministic convergences that build on Assumption \ref{assm_base_intensity_FLLN} and convergence relation (\ref{assm_gamma_LLN}), resulting from functional laws of large number scaling.

\begin{assumption}\label{assm_base_intensity_FCLT}
	Given $\ol{\mu}^{mH}(.)$ in Assumption \ref{assm_base_intensity_FLLN}, there exists a locally integrable function $\widehat{\mu}^{mH}(.)$ such that 
	\begin{equation}
	\sup_{0\leq t \leq T} \left| n^{1-\delta} \left(   \int_0^t  \mu^{mH}_{n}(s) \diff s- \int_0^t \ol{\mu}^{mH}(s)\diff s \right) - \int_0^t \widehat{\mu}^{mH}(s)\diff s\right|\ra 0, \q \xt{as} \q n\ra \infty.
	\end{equation}
	\end{assumption}

\begin{assumption}\label{assm_pi_FCLT}
    Given the measure $\ol{\pi}_{t}(\mathcal{A})$ on $\mathcal{E}$ in Assumption \ref{assm_pi_FLLN}, there exists a measure $\widehat{\pi}_{t}(\mathcal{A})$ on $\mathcal{E}$ such that for every bounded $\mathscr{B}_{\mathbb{R}_{+}} \otimes \mathscr{B}_{\mathcal{E}}$-measurable function $f(.,.)$,
    \begin{equation}
        \sup_{0\leq  t\leq T} \left|
        n^{1-\delta} \left(\int_{\mathcal E} f(t,z) \pi^{n}_{t}(\diff z) - \int_{\mathcal E} f(t,z) \ol{\pi}_{t}(\diff z)  \right)   
        - \int_{\mathcal{E}} f(t,z) \widehat{\pi}_{t}(\diff z) 
        \right| \ra 0, \q \xt{as} \q n\ra \infty. 
    \end{equation}
\end{assumption}

   \begin{assumption}\label{assm_Gamma_FCLT}
   Given $\ol{\Gamma}(.,.)$ in (\ref{assm_gamma_LLN}), there exists a locally bounded function $\widehat{\Gamma}(.,.)$ on 
    $\mathbb{R}_{+} \times \mathbb{R}_{+}$ such that for every $T>0$ and $\eta>0$, 
    \begin{equation*}
        \sup_{\substack{	0\leq v \leq t \leq T\\t-v \geq \eta }} \left|n^{1-\delta}\left(   \int_{v}^{t} \ol{\Gamma}_{n}(s,v) \diff s - \frac{\Psi(v}{1-\Psi(v)} \right)- \widehat{\Gamma}(t,v)  \right| \ra 0, \q \xt{as} \q n\ra \infty.
    \end{equation*}   
   \end{assumption}
    \begin{note}
        By virtue of (\ref{assm_psi_LLN}), if the weighted kernel $\psi(t,s)$ has second order deterministic approximation at order $n^{-(1-\delta)}$ given by 
        \begin{equation*}
            \int_{v}^{t} \ol{\psi}_{n}(s,v) \diff s = \Psi(v) + n^{-(1-\delta)} \widehat{H}_{\Psi}(t,v) + o (n^{-(1-\delta)}),
        \end{equation*}
        for some locally bounded function $\widehat{H}_{\Psi}(.,.)\in \mathbb{R}_{+} \times \mathbb{R}_{+}$, provided $t-v\geq \eta>0$, for any $\eta>0$, it results
            \begin{equation*}
            \int_{v}^{t} \ol{\Gamma}_{n}(s,v) \diff s = \frac{\Psi(v)}{1-\Psi(v)} + n^{-(1-\delta)} \frac{\widehat{H}_{\Psi}(t,v)}{\left(1-\Psi(v) \right)^{2}} + o (n^{-(1-\delta)}),
            \end{equation*}
        which essentially leads to Assumption \ref{assm_Gamma_FCLT} with $\widehat{\Gamma}(t,v)=\frac{\widehat{H}_{\Psi}(t,v)}{\left(1-\Psi(v) \right)^{2}}$, provided the stability condition (\ref{psi_stability_LLN}) holds.
    \end{note}

Before stating the diffusion approximations for $\{\widehat{Z}^{mH}_{n}(t), t\geq 0\}$, we analyze the diffusion scaled process $\{\widehat{\Upsilon}_{n,t}(f), t\geq 0\}$ for every $\mathscr{B}_{\mathcal E}$-measurable function $f$, which is defined by 
\begin{equation}\label{def_Upsilon_{n}_(t)_diffusion}
\widehat{\Upsilon}_{n,t}(f) =\frac{1}{n^{\alpha/2}}
\int_0^{n^{\alpha}t} \int_{\mathcal E} \int_0^{\lambda^{mH}(v-)} f(z) \mathcal T_{n^{\alpha}t}(v,z) M^{P}(\diff v, \diff z, \diff u)
\end{equation}
Note that, for every $\mathscr{B}_{\mathcal E}$-measurable function $f$,  $\{\widehat{\Upsilon}_{n,t}(f), t\geq 0\}$ is a normalized version of the process $\{\Upsilon_{n,t}(f), t\geq 0\}$ by normalizing the difference $\int_0^{t} \lambda_{n} ^{mH}(s) \diff s- \int_0^{t} \E \lambda_{n} ^{mH}(s) \diff s $ by $n^{\alpha/2}$.

\begin{proposition}\label{prop_fclt_upsilon_{n}(t)}
	Suppose the assumptions of Lemma \ref{lemma_martingale_upsilon_{n}(t)} are satisfied. Then the diffusion scaled process $\{\widehat{\Upsilon}_{n}(t), t\geq 0\}$ converges in distribution to the Gaussian process $\{\mathcal B (t), t\geq 0\}$ with trajectories in $\md([0, \infty), \mathbb{R})$ as $n\ra \infty$, where the limiting process $\{\mathcal B(t), t\geq 0\}$ is a mean zero Gaussian process with covariance 
	\begin{equation}\label{limit_B(t)_covariance}
    \E \mathcal {B}(t)\mathcal B(s)=
	\int_0^{t\wedge s} \int_{\mathcal E}  \mathcal T_{\infty}^{2}(v,z) C^{mH}(v)  \ol{\pi}_v(\diff z)\diff v .
	\end{equation}
	\end{proposition}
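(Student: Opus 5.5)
The plan is to reduce the statement to a martingale functional CLT for the process with the frozen integrand $\mathcal T_\infty$. Set $\widehat{\Upsilon}_n(t)=\widehat{\Upsilon}_{n,t}(\bm 1)$. Proposition \ref{prop_upsilon_tilde_upsilon_L_2} (with $f\equiv \bm 1$) gives
\[
\sup_{0\le t\le T}\bigl|\widehat{\Upsilon}_n(t)-n^{-\alpha/2}\widetilde{\Upsilon}_n(t)\bigr|\to 0
\]
in probability. By the converging-together lemma in $\md([0,\infty),\mathbb{R})$ with the $\jt$ topology, it therefore suffices to show that $\widetilde{M}_n(t):=n^{-\alpha/2}\widetilde{\Upsilon}_n(t)$ converges in distribution to $\mathcal B$. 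By Lemma \ref{lemma_martingale_upsilon_{n}(t)}, $\widetilde{M}_n$ is an $\mathscr F_n$-local square-integrable martingale. I will apply the martingale functional central limit theorem of \cite{ethier2009markov} (Theorem 7.1.4). It requires two things: convergence in probability of the predictable quadratic variation to a deterministic continuous increasing function, and asymptotic negligibility of the jumps. The limit is then the continuous Gaussian martingale $\mathcal B$ with independent increments and the covariance (\ref{limit_B(t)_covariance}).

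\textbf{Quadratic variation.} Lemma \ref{lemma_martingale_upsilon_{n}(t)} and the substitution $v=n^\alpha w$ give
\[
\langle \widetilde M_n\rangle(t)=\int_0^{t}\int_{\mathcal E}\mathcal T_\infty^2(n^\alpha w,z)\,\lambda^{mH}_n(w)\,\pi^n_w(\diff z)\,\diff w .
\]
Here I interpret $\mathcal T_\infty$ in the scaled variables, i.e.\ $\mathcal T_\infty(w,z)=\Phi(z)/(1-\Psi(w))$, as the limit of (\ref{def_tau_t(v,z)_scaled_new}) with $v$ measured on the $n$-scale. The function $z\mapsto\Phi^2(z)$ is essentially bounded by Assumption \ref{assm_kernel_locally_bounded}, and $(1-\Psi)^{-2}$ is bounded on $[0,T]$ by (\ref{psi_stability_LLN}). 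Assumption \ref{assm_pi_FLLN} then lets me replace $\pi^n_w$ by $\ol\pi_w$ uniformly in $w\le T$. The error is bounded by $o(1)\int_0^T\lambda_n^{mH}$, which is $o_P(1)$ by Theorem \ref{thm_intensity_LLN}. What remains is $\int_0^t h(w)\lambda^{mH}_n(w)\diff w$ with the bounded weight $h(w)=\int_{\mathcal E}\mathcal T^2_\infty(w,z)\ol\pi_w(\diff z)$. I will show
\[
\int_0^t h(w)\lambda_n^{mH}(w)\,\diff w\to\int_0^t h(w)C^{mH}(w)\,\diff w
\]
uniformly in probability. Theorem \ref{thm_intensity_LLN} gives uniform convergence of the cumulative intensity $\int_0^t\lambda_n^{mH}$, which amounts to weak convergence of the measures $\lambda_n^{mH}(w)\diff w$ on $[0,T]$ to $C^{mH}(w)\diff w$. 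To pass this through the weight $h$, I first approximate $h$ in $L^1$ by step functions, on which the convergence is immediate. I then control the remainder by $\|h-h_k\|$ times $\E\sup_w\lambda_n^{mH}$-type bounds, using Lemma \ref{lemma_bound of E lambda^{mH}(t)} together with (\ref{def_E(lambda_n(t))}) and (\ref{assm_gamma_n_bound}). If $\Psi$ and $\ol\pi$ are continuous in $w$, then $h$ is continuous and Riemann–Stieltjes approximation gives the limit directly.

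\textbf{Jumps.} Each jump of $\widetilde M_n$ has size $n^{-\alpha/2}\mathcal T_\infty(\cdot,\eta_i)$, which is at most $n^{-\alpha/2}\,\esp|\Phi|\,\sup_{w\le T}(1-\Psi(w))^{-1}$. This tends to $0$ deterministically. Hence both $\E\sup_t|\Delta\widetilde M_n(t)|^2\to 0$ and $\E\sup_t|\Delta\langle\widetilde M_n\rangle(t)|=0$ hold, the latter because the predictable quadratic variation is continuous. Finally, the limit $\int_0^t\int\mathcal T_\infty^2 C^{mH}\ol\pi$ is continuous and deterministic, so the martingale FCLT yields the Gaussian limit $\mathcal B$ with $\E\mathcal B(t)\mathcal B(s)$ equal to its value at $t\wedge s$, as claimed.

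\textbf{Main obstacle.} The hardest step is the quadratic variation: turning the cumulative convergence in Theorem \ref{thm_intensity_LLN} into convergence of $\int h\,\lambda_n^{mH}$ for a merely bounded measurable weight $h$. I would first try assuming continuity of $v\mapsto\Psi(v)$ and $v\mapsto\ol\pi_v$, which makes $h$ continuous so the integration-by-parts argument applies. A second point needing care is the time scaling of $\mathcal T_\infty$ in the definition of $\widetilde{\Upsilon}_{n,t}$. I would handle it by using the $n$-scaled representation (\ref{def_tau_t(v,z)_scaled_new}) consistently, so that $\mathcal T_\infty$ is evaluated at the rescaled time $w$.
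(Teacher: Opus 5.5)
Your architecture matches the paper's. You reduce to the frozen-integrand martingale $n^{-\alpha/2}\widetilde{\Upsilon}_{n}$ via Proposition \ref{prop_upsilon_tilde_upsilon_L_2} and the converging-together lemma. You then apply Theorem 7.1.4 of \cite{ethier2009markov}, using the bracket from Lemma \ref{lemma_martingale_upsilon_{n}(t)} and a deterministic $O(n^{-\alpha/2})$ bound on the jumps.

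The paper's bracket step simply asserts that the term containing $\lambda^{mH}_{n}-C^{mH}$ vanishes by Theorem \ref{thm_intensity_LLN}. You correctly see that cumulative convergence has to be upgraded to convergence of $\int_0^t h(w)\lambda^{mH}_{n}(w)\diff w$. However, your primary route for that upgrade does not close. The $L^1$ approximation of $h$ needs $\sup_{n}\sup_{w\le T}\E\lambda^{mH}_{n}(w)<\infty$, and the results you cite do not give it:
\begin{itemize}
\item Lemma \ref{lemma_bound of E lambda^{mH}(t)} works on a fixed unscaled horizon, while $\E\lambda^{mH}_{n}(w)=\E\lambda^{mH}(n^{\alpha}w)$ needs the growing horizon $[0,n^{\alpha}T]$.
\item (\ref{assm_gamma_n_bound}) controls $\int|\ol{\Gamma}_{n}(s,v)|\diff s$, which integrates in the first argument. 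The representation (\ref{def_E(lambda_n(t))}) instead requires $\int_0^t|\ol{\Gamma}_{n}(t,s)|\diff s$, which integrates in the second. For a non-convolution kernel these are different quantities.
\end{itemize}
A hypothesis such as $\sup_{t\ge 0}\int_0^t|\psi(t,s)|\diff s<1$ would supply the bound through the argument of Lemma \ref{lemma_bound of E lambda^{mH}(t)}, but it is not among the assumptions. Without some such uniform integrability, weak convergence of $\lambda^{mH}_{n}(w)\diff w$ does not pass through a merely measurable weight.

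The step can be closed in two ways.

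\emph{Riemann-integrable weight.} If $h$ is Riemann integrable (a slight weakening of your continuity fallback), sandwich $g_k\le h\le G_k$ between step functions with $\int_0^T (G_k-g_k)C^{mH}\diff w\to 0$. Nonnegativity of $\lambda^{mH}_{n}$ and Theorem \ref{thm_intensity_LLN} then suffice. Pointwise convergence in $t$ is all Theorem 7.1.4 requires.

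\emph{No extra hypothesis.} Assumption \ref{assm_pi_FLLN} forces self-similar limits.
\begin{itemize}
\item For integers $p,m\ge 1$ and $q=p/m$ one has $\pi^{kp}_{w}(\mathcal A)=\pi^{km}_{q^{\alpha}w}(q^{\beta}\mathcal A)$ for every $k$. Letting $k\to\infty$ gives $\ol{\pi}_{q^{\alpha}w}(q^{\beta}\mathcal A)=\ol{\pi}_{w}(\mathcal A)$.
\item Taking (\ref{def_Phi_z_limit_phi_n_kernel}) along $n=km$ gives $\Phi(q^{\beta}z)=\Phi(z)$.
\item Hence $\Psi$ and $w\mapsto\int_{\mathcal E}\Phi^{2}(z)\,\ol{\pi}_{w}(\diff z)$ are measurable and invariant under the dense group of dilations $w\mapsto q^{\alpha}w$. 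So they are a.e.\ constant, and $h=h_0$ a.e.
\item Each $\lambda^{mH}_{n}(w)\diff w$ is absolutely continuous, so $\int_0^t h\lambda^{mH}_{n}\diff w=h_0\int_0^t\lambda^{mH}_{n}\diff w$. Theorem \ref{thm_intensity_LLN} then finishes.
\end{itemize}
The same identities with $q=n$, namely $\Psi(n^{\alpha}w)=\Psi(w)$ and $\Phi(n^{\beta}z)=\Phi(z)$, give $\mathcal T_\infty(n^{\alpha}w,n^{\beta}z)=\mathcal T_\infty(w,z)$. So your rescaled reading of $\mathcal T_\infty$ agrees with the literal definition, and your jump bound holds under either reading.
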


Now, we proceed to state that the weak convergence of the cumulative intensity process  
$\{\widehat{Z}^{mH}_{n}(t), t\geq 0\}$ built on the functional central limit theorem, where the limiting process has a diffusion nature capturing the randomness through the Gaussian process $\{\mathcal{B}(t), t\geq 0\}$ resulting from Proposition \ref{prop_fclt_upsilon_{n}(t)}.

\begin{theorem}\label{thm_FCLT_intensity_process}
	Let the deterministic diffusion convergences hold in Assumptions \ref{assm_base_intensity_FCLT} and \ref{assm_Gamma_FCLT}. Additionally, the assumptions of  Theorem \ref{thm_intensity_LLN} are satisfied. Then $\{\widehat{Z}^{mH}_{n}(t), t\geq 0\}$ converges in distribution to the process $\{\widehat{\mathcal{B}}^{mH}(t), t\geq 0\}$ in $\mathbb{C}([0, \infty), \mathbb{R})$ embedded with Skorokhod $\jt$ topology as $n\ra \infty$,  where the limiting process $\{\widehat{\mathcal{B}}^{mH}(t), t\geq 0 \}$ satisfies the following stochastic integral equation
	\begin{equation}\label{limit_Z^{mH}(t)_intensity_FCLT}
	\widehat{\mathcal{B}}^{mH}(t)= \mathcal B(t)+ \bm{1}( \delta=1-\alpha/2) \left( \int_0^t \widehat{\mu}^{mH}(s) \diff s
    + \int_0^t \widehat{\Gamma}(t,v) \ol{\mu}^{mH} (v)\diff v \right) ,
	\end{equation} 
	where $\{\mathcal B(t), t\geq 0\}$ is a Gaussian process with mean zero and covariance (\ref{limit_B(t)_covariance}). 
\end{theorem}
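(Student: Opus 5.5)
Following the decomposition already used for Theorem \ref{thm_intensity_LLN}, we split $\widehat{Z}^{mH}_{n}$ into a stochastic part and a deterministic centering part:
\begin{equation*}
\widehat{Z}^{mH}_{n}(t)= n^{\alpha/2}\Big(\int_0^t \lambda^{mH}_{n}(s)\diff s-\int_0^t \E(\lambda^{mH}_{n}(s))\diff s\Big) + n^{\alpha/2}\Big(\int_0^t \E(\lambda^{mH}_{n}(s))\diff s-\int_0^t C^{mH}(s)\diff s\Big).
\end{equation*}
By (\ref{sum_of_X_{n,1}_X_{n,2}}) and (\ref{diff_intensity_FLLN}), the first term equals $n^{-\alpha/2}\Upsilon_{n,t}(\bm 1)=\widehat{\Upsilon}_{n,t}(\bm 1)$. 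Proposition \ref{prop_upsilon_tilde_upsilon_L_2} lets us replace it, uniformly on $[0,T]$ in probability, by $n^{-\alpha/2}\widetilde{\Upsilon}_{n}(t)$. Proposition \ref{prop_fclt_upsilon_{n}(t)} then gives convergence in distribution to $\mathcal B$. If needed, this step is the martingale FCLT of \cite{ethier2009markov} applied to $n^{-\alpha/2}\widetilde{\Upsilon}_n$. Lemma \ref{lemma_martingale_upsilon_{n}(t)} gives the predictable bracket
\begin{equation*}
n^{-\alpha}\int_0^{n^\alpha t}\int_{\mathcal E}\mathcal T_\infty^2(v,z)\lambda^{mH}(v)\pi_v(\diff z)\diff v=\int_0^t\int_{\mathcal E}\mathcal T_\infty^2(v,z)\lambda^{mH}_n(v)\pi^n_v(\diff z)\diff v .
\end{equation*}
This bracket converges to the covariance (\ref{limit_B(t)_covariance}) by Theorem \ref{thm_intensity_LLN} and Assumption \ref{assm_pi_FLLN}. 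The jumps of $n^{-\alpha/2}\widetilde{\Upsilon}_n$ are of order $n^{-\alpha/2}\|\mathcal T_\infty\|$, hence vanish. Since $\mathcal B$ is continuous, the limit lives in $\mathbb C([0,\infty),\mathbb R)$.

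For the deterministic term, use (\ref{def_E(lambda_n(t))}) and Fubini to write
\begin{equation*}
\int_0^t\E(\lambda^{mH}_n(s))\diff s=\int_0^t\mu^{mH}_n(v)\diff v+\int_0^t\mu^{mH}_n(v)\Big(\int_v^t\ol\Gamma_n(s,v)\diff s\Big)\diff v .
\end{equation*}
The limit $\int_0^t C^{mH}$ equals $\int_0^t\ol\mu^{mH}(v)\big(1+\frac{\Psi(v)}{1-\Psi(v)}\big)\diff v$. The difference therefore splits into three pieces:
\begin{itemize}
\item[(a)] $\int_0^t(\mu_n^{mH}-\ol\mu^{mH})(1+\frac{\Psi}{1-\Psi})\diff v$;
\item[(b)] $\int_0^t\ol\mu^{mH}(v)\big(\int_v^t\ol\Gamma_n(s,v)\diff s-\frac{\Psi(v)}{1-\Psi(v)}\big)\diff v$;
\item[(c)] the cross term $\int_0^t(\mu_n^{mH}-\ol\mu^{mH})\big(\int_v^t\ol\Gamma_n-\frac{\Psi}{1-\Psi}\big)\diff v$.
\end{itemize}
Multiplying by $n^{\alpha/2}=n^{1-\delta}\cdot n^{\alpha/2-(1-\delta)}$ and using Assumptions \ref{assm_base_intensity_FCLT} and \ref{assm_Gamma_FCLT}, the $n^{1-\delta}$-rescaled pieces (a) and (b) converge to $\int_0^t\widehat\mu^{mH}$ and $\int_0^t\widehat\Gamma(t,v)\ol\mu^{mH}(v)\diff v$. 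Piece (c) is of order $o(n^{-(1-\delta)})$. Hence the whole deterministic term tends to these limits when $\delta=1-\alpha/2$ and to $0$ when $\alpha/2<1-\delta$, which produces the indicator in (\ref{limit_Z^{mH}(t)_intensity_FCLT}). In this step we must interpret the regime so that $n^{\alpha/2}\le n^{1-\delta}$. In piece (a) the weight $1/(1-\Psi)$ multiplies $\mu_n-\ol\mu$, so Assumption \ref{assm_base_intensity_FCLT} must be upgraded from integrals against $\bm 1_{[0,t]}$ to integrals against the bounded weight $\Psi/(1-\Psi)$. We will do this by approximating that weight with step functions and using the uniform bound on $\mu_n$ from Assumption \ref{assm_base_intensity_FLLN}.

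The main obstacle is that Assumption \ref{assm_Gamma_FCLT} and (\ref{assm_gamma_LLN}) hold only for $t-v\ge\eta$. We therefore split $\int_0^t=\int_0^{t-\eta}+\int_{t-\eta}^t$. On the strip $[t-\eta,t]$ we bound the integrand using (\ref{assm_gamma_n_bound}) and the local boundedness of $\mu_n$, $\ol\mu$ and $\widehat\Gamma$. This gives a contribution $O(\eta\, n^{1-\delta})$ before rescaling, which is not automatically small. To handle it we will first try to show that $n^{1-\delta}\sup_{t}\int_{t-\eta}^{t}\ol\mu^{mH}(v)\,|\int_v^t\ol\Gamma_n(s,v)\diff s-\frac{\Psi(v)}{1-\Psi(v)}|\diff v$ can be made small by letting $\eta=\eta_n\downarrow0$ at a rate tied to the concentration of $\ol\Gamma_n$ near the diagonal, since $\int_v^t\ol\Gamma_n$ reaches its mass within time $O(n^{-\alpha})$. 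If that fails, we will absorb the strip into the locally bounded limit $\widehat\Gamma$ via a diagonal argument in $\eta$. Finally, the stochastic term converges in distribution and the deterministic term converges uniformly on compacts, so Slutsky's lemma in $\md$ (the limit being continuous) yields the claim.
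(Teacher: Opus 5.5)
Your overall route matches the paper's. You split $\widehat Z^{mH}_n$ into $\widehat\Upsilon_n$, handled by Propositions \ref{prop_upsilon_tilde_upsilon_L_2} and \ref{prop_fclt_upsilon_{n}(t)}, plus the deterministic term $n^{\alpha/2}\bigl(\int_0^t\E\lambda^{mH}_n-\int_0^t C^{mH}\bigr)$, then apply Slutsky. The gap is in the deterministic term.

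\textbf{The limit claimed for (a) is wrong.} Your split into (a), (b), (c) is algebraically correct. But the weight in (a) is $1+\frac{\Psi}{1-\Psi}=\frac{1}{1-\Psi}$. So even granting the upgrade you describe, $n^{1-\delta}\cdot$(a) tends to $\int_0^t\widehat\mu^{mH}(v)(1-\Psi(v))^{-1}\diff v=\int_0^t\widehat\mu^{mH}+\int_0^t\widehat\mu^{mH}\frac{\Psi}{1-\Psi}$, not to $\int_0^t\widehat\mu^{mH}$. Nothing in the hypotheses kills the second summand.

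A concrete check: take no marks ($\beta=0$), $\phi(t)=\rho\gamma e^{-\gamma t}$ with $0<\rho<1$, $\mu^{mH}(s)=\mu_0+c(1+s)^{-1/2}$, and $\delta=1-\alpha/2$. All assumptions hold with $\Psi\equiv\rho$, $\ol\mu^{mH}\equiv\mu_0$, $\widehat\mu^{mH}(s)=cs^{-1/2}$ and $\widehat\pi\equiv0$. Since $\int_v^t\ol\Gamma_n(s,v)\diff s=\frac{\rho}{1-\rho}\bigl(1-e^{-(1-\rho)\gamma n^{\alpha}(t-v)}\bigr)$, also $\widehat\Gamma\equiv0$. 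Dominated convergence then gives $\E\widehat Z^{mH}_n(t)\to 2c\sqrt t/(1-\rho)$, whereas (\ref{limit_Z^{mH}(t)_intensity_FCLT}) has mean $2c\sqrt t$.

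So your argument, carried out correctly, proves a different formula, with $\int_0^t\widehat\mu^{mH}(v)(1-\Psi(v))^{-1}\diff v$ in place of $\int_0^t\widehat\mu^{mH}$. The paper's proof reaches the displayed formula only because its last equality silently drops $n^{\alpha/2}\int_0^t\bigl(\int_v^t\ol\Gamma_n(s,v)\diff s\bigr)\bigl(\mu^{mH}_n(v)-\ol\mu^{mH}(v)\bigr)\diff v$. That term was kept separately in the proof of Lemma \ref{lemma_intensity_LLN_expected}, where it was harmless because there was no factor $n^{\alpha/2}$. You should either prove the corrected limit, or add a hypothesis (e.g. $\widehat\mu^{mH}\equiv0$) under which the two coincide. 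Asserting that (a) tends to $\int_0^t\widehat\mu^{mH}$ is not valid.

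\textbf{Two further steps are not closed as written.}

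First, step functions plus $\sup_n\|\mu^{mH}_n\|_T<\infty$ do not upgrade Assumption \ref{assm_base_intensity_FCLT} to weighted integrals. The approximation error $n^{1-\delta}\int_0^T|\mu^{mH}_n-\ol\mu^{mH}|\,|w-w_k|\diff v$ is only $O(n^{1-\delta}\|w-w_k\|_{\mathbb L^1})$. Instead, integrate by parts against $F_n(v)=n^{1-\delta}\int_0^v(\mu^{mH}_n-\ol\mu^{mH})$, which converges uniformly. This needs $v\mapsto\Psi(v)$ to be of bounded variation on $[0,T]$, an extra assumption you must add.

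Second, neither of your fallbacks for the near-diagonal strip follows from the hypotheses. Nothing assumes that $\int_v^t\ol\Gamma_n(s,v)\diff s$ reaches its limit within $O(n^{-\alpha})$ of the diagonal. Local boundedness of $\widehat\Gamma$ says nothing about $n^{1-\delta}\bigl(\int_v^t\ol\Gamma_n(s,v)\diff s-\frac{\Psi(v)}{1-\Psi(v)}\bigr)$ for $t-v<\eta$, so no diagonal argument in $\eta$ can recover it. You need an explicit condition such as
$\lim_{\eta\downarrow0}\limsup_{n}\sup_{t\le T}n^{1-\delta}\int_{(t-\eta)\vee0}^{t}\bigl|\int_v^t\ol\Gamma_n(s,v)\diff s-\frac{\Psi(v)}{1-\Psi(v)}\bigr|\diff v=0$.
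This holds in the example above, and the same control is needed for the strip part of your piece (c). The paper's proof does not treat the diagonal at all, so this has to enter as a hypothesis rather than be extracted from its argument.
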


Knowing the weak convergence of the diffusion scaled intensity process in Theorem \ref{thm_FCLT_intensity_process}, we derive the functional central limit theorem for the marked Hawkes process in next Theorem \ref{thm_FCLT_N^{mH}(t)}, which can be obtained by the weak convergence of the diffusion scaled  process $\{\widehat{N}^{mH}(t), t\geq 0\}$.

\begin{theorem}\label{thm_FCLT_N^{mH}(t)}
	 Suppose the assumptions of Theorem \ref{thm_FCLT_intensity_process} are satisfied. Additionally, the deterministic convergence in Assumption \ref{assm_pi_FCLT} hold on $\mathcal{E}$.
     Then  $\{ \widehat{N}^{mH}_{n}(t), t\geq 0\}$ converges in distribution to $\bm{1}(\delta=1-\alpha/2)\{\widehat{N}^{mH}(t), t\geq 0\}$ in $\md ( [0,\infty), \mathbb{R})$ embedded with Skorokhod $\jt$ topology, where the limiting process is a decomposition of two Gaussian processes $\{\mathcal W(.), t\geq 0\}$ and 
	 $\{\widehat{\mathcal{B}}^{mH}(t), t\geq 0\}$ given by
	 \begin{equation*}
	 \widehat{N}^{mH}(t)= 
	  \mathcal W(t)+\int_0^t \int_{\mathcal E} \ol{\pi}_{s}(\diff z) \diff \widehat{\mathcal{B}}^{mH}(s)+ \int_0^t \int_{\mathcal E} \widehat{\pi}_{s}(\diff z) C^{mH}(s) \diff s, 
	 \end{equation*}
	 where $\{\mathcal W(t), t\geq 0\}$ is a Gaussian process with mean zero and time-dependent variance $\int_0^t \int_{\mathcal E} C^{mH}(s) \ol{\pi}_{s} ({\diff z}) \diff s$ and $\{\widehat{\mathcal{B}}^{mH}(t), t\geq 0\}$ is a diffusion process satisfying the stochastic integral equation (\ref{limit_Z^{mH}(t)_intensity_FCLT}).
\end{theorem}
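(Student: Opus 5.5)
The argument rests on the compensator decomposition of the counting process. We write
\begin{equation*}
\ol{N}^{mH}_{n}(t)-\ol{N}^{mH}(t)=\frac{1}{n^{\alpha}}\int_0^{t}\int_{\mathcal E}\int_0^{\lambda^{mH}_{n}(s-)}M^{P}_{n}(\diff s,\diff z,\diff u)+\Big(\int_0^t\int_{\mathcal E}\lambda^{mH}_{n}(s)\pi^{n}_{s}(\diff z)\diff s-\int_0^t C^{mH}(s)\diff s\Big).
\end{equation*}
After multiplying by $n^{1-\delta}$ there are three contributions. The first is a martingale term $\mathcal W_n(t):=n^{1-\delta-\alpha}\int_0^{n^\alpha t}\int_{\mathcal E}\int_0^{\lambda^{mH}(v-)}M^{P}(\diff v,\diff z,\diff u)$. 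The second is the intensity fluctuation weighted by $\ol\pi_s$. The third is the mark-distribution correction obtained by splitting $\pi^n_s=\ol\pi_s+(\pi^n_s-\ol\pi_s)$.

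We treat the martingale term first. Its predictable quadratic variation is $n^{2(1-\delta)-\alpha}\int_0^t\lambda^{mH}_n(s)\diff s$. By Theorem \ref{thm_intensity_LLN}, this converges in probability to $n^{2(1-\delta)-\alpha}\,\ol N^{mH}(t)$ up to $o(1)$. The prefactor $n^{2(1-\delta)-\alpha}$ equals $1$ exactly when $\delta=1-\alpha/2$, and it tends to $0$ when $1-\delta<\alpha/2$.

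In the critical case, the jumps are of size $n^{-\alpha/2}$, so the martingale CLT of \cite{ethier2009markov} applies. It gives $\mathcal W_n\Rightarrow\mathcal W$, a continuous Gaussian martingale with variance $\int_0^t C^{mH}(s)\ol\pi_s(\mathcal E)\diff s$. In the subcritical case, Doob's inequality shows the term vanishes uniformly in probability.

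For joint convergence with the intensity part, we compute the cross-variation between $\mathcal W_n$ and $\widetilde\Upsilon_n$ through Lemma \ref{lemma_martingale_upsilon_{n}(t)} with $f=\bm 1$, $g=\bm 1$ adapted. This lets us apply the multidimensional martingale CLT to the pair $(\mathcal W_n,\widehat\Upsilon_n)$, after invoking Proposition \ref{prop_upsilon_tilde_upsilon_L_2} to replace $\Upsilon$ by $\widetilde\Upsilon$.

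For the second contribution we write it as $\int_0^t\int_{\mathcal E}\ol\pi_s(\diff z)\,\diff\big[n^{1-\delta}(\int_0^s\lambda^{mH}_n-\int_0^sC^{mH})\big]$. Note that $n^{1-\delta}=n^{1-\delta-\alpha/2}\,n^{\alpha/2}$. Theorem \ref{thm_FCLT_intensity_process} then gives convergence to $\int_0^t\int\ol\pi_s(\diff z)\diff\widehat{\mathcal B}^{mH}(s)$ in the critical case and to $0$ otherwise. The map from the integrator to this Stieltjes integral is continuous in the uniform topology on $\mathbb C$, because $\ol\pi_s(\mathcal E)=1$ (integration by parts if a general bounded weight is kept). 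The continuous mapping theorem therefore applies.

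For the third contribution we use Assumption \ref{assm_pi_FCLT} with $f=\bm 1$, uniformly in $s$. Combined with Theorem \ref{thm_intensity_LLN}, it gives $\int_0^t\int\widehat\pi_s(\diff z)\lambda^{mH}_n(s)\diff s+o_P(1)\to\int_0^t\int\widehat\pi_s(\diff z)C^{mH}(s)\diff s$. Replacing $\lambda_n$ by $C^{mH}$ here requires Stieltjes-type convergence against the bounded weight $\widehat\pi_s(\mathcal E)$, which follows from Theorem \ref{thm_intensity_LLN} by approximating the weight with step functions. Summing the three limits and using Slutsky's lemma, together with continuity of the limits so that $\jt$ convergence reduces to uniform convergence, yields the stated limit.

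The main obstacle is the joint convergence of $\mathcal W_n$ and $\widehat Z^{mH}_n$. Both are driven by the same Poisson measure, so they cannot be treated as independent. I would first establish vector martingale convergence of $(\mathcal W_n,\widetilde\Upsilon_n/n^{\alpha/2})$ by identifying the limiting cross-covariance $\int_0^{t\wedge s}\int\mathcal T_\infty(v,z)C^{mH}(v)\ol\pi_v(\diff z)\diff v$. I would then add the deterministic parts of $\widehat Z^{mH}_n$ coming from Assumptions \ref{assm_base_intensity_FCLT} and \ref{assm_Gamma_FCLT}.
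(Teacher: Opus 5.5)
In the critical case $\delta=1-\alpha/2$ your argument is correct. It rests on the same three-term split as the paper (martingale part, intensity fluctuation, mark-distribution correction); the differences lie in how each piece is closed.

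The paper weights the intensity term by $\pi^{n}_{s}$ and puts $C^{mH}$ in the correction, while you weight by $\ol{\pi}_{s}$ and keep $\lambda^{mH}_{n}$ there. For $f\equiv\bm{1}$ this is immaterial. Since $\pi^{n}_{s}(\mathcal E)=\ol{\pi}_{s}(\mathcal E)=1$, the correction vanishes identically and $\widehat{\pi}_{s}(\mathcal E)=0$, so your step-function approximation is not needed. For the same reason the intensity term is exactly $n^{1-\delta-\alpha/2}\widehat{Z}^{mH}_{n}(t)$. Your continuous-mapping argument therefore replaces the paper's P-UT machinery (Jacod--Shiryaev VI.6.13 and VI.6.22) at no cost.

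The substantive difference is joint convergence. The paper passes to the sum by citing Whitt's product-space theorems, which give joint convergence when a limit is deterministic or the components are independent. Here, however, $\mathcal W_{n}$ and $\widehat{Z}^{mH}_{n}$ are driven by the same $M^{P}$ and both limits are random. Your bivariate martingale CLT for $(\mathcal W_{n},n^{-\alpha/2}\widetilde{\Upsilon}_{n})$, with cross-variation converging to $\int_0^t\int_{\mathcal E}\mathcal{T}_{\infty}(v,z)C^{mH}(v)\ol{\pi}_{v}(\diff z)\diff v$, followed by Proposition \ref{prop_upsilon_tilde_upsilon_L_2} and the drift terms, actually closes this step. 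It also fixes the joint law that the statement leaves open.

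This matters. The Gaussian part of the limit then has variance $\int_0^t\int_{\mathcal E}(1+\mathcal{T}_{\infty})^{2}C^{mH}\ol{\pi}_{v}(\diff z)\diff v$. For a simple Hawkes process with constant base rate this gives $\mu^{mH}t/(1-\|\phi\|_{\mathbb{L}^{1}})^{3}$, as in Remark \ref{remark_fclt_simple Hawkes}; uncorrelated $\mathcal W$ and $\mathcal B$ would not.

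Your jump check (each jump of $\mathcal W_{n}$ has size $n^{-\alpha/2}$) is also sounder than the paper's. Its bound on the sum of squared jumps loses the factor $n^{\alpha}$ in $\E N^{mH}_{n}(T)$.

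When you reassemble the drift, keep the term $n^{1-\delta}\int_0^t\big(\int_v^t\ol{\Gamma}_{n}(s,v)\diff s\big)\big(\mu^{mH}_{n}(v)-\ol{\mu}^{mH}(v)\big)\diff v$. It generally tends to $\int_0^t\frac{\Psi(v)}{1-\Psi(v)}\widehat{\mu}^{mH}(v)\diff v$, so $\widehat{\mu}^{mH}$ effectively becomes $\widehat{\mu}^{mH}/(1-\Psi)$. The paper's proof of Theorem \ref{thm_FCLT_intensity_process} drops this term.

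Outside the critical case neither your argument nor the paper's works, because the statement itself fails there.

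If $\delta<1-\alpha/2$ (possible whenever $\alpha<1$, since $\delta\geq 1/2$), the prefactor $n^{2(1-\delta)-\alpha}$ in $\langle\mathcal W_{n}\rangle$ diverges. The intensity term's martingale part is positively correlated with $\mathcal W_{n}$ and cannot cancel it, so $\widehat{N}^{mH}_{n}$ does not converge at all. Your dichotomy silently skips this case.

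If $\delta>1-\alpha/2$, your conclusion that the intensity term tends to $0$ rests on the indicator in Theorem \ref{thm_FCLT_intensity_process}. Expanding directly, only the martingale piece $n^{1-\delta-\alpha/2}\widehat{\Upsilon}_{n}$ dies. By Assumption \ref{assm_base_intensity_FCLT}, $n^{1-\delta}\big(\int_0^t\mu^{mH}_{n}-\int_0^t\ol{\mu}^{mH}\big)\to\int_0^t\widehat{\mu}^{mH}$, and Assumption \ref{assm_Gamma_FCLT} likewise leaves a $\widehat{\Gamma}$ contribution. So a deterministic drift survives unless these second-order terms vanish. In fact Theorem \ref{thm_FCLT_intensity_process} cannot hold in that regime: its drift is $n^{\alpha/2-(1-\delta)}\to\infty$ times a generally nonzero limit.

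You should restrict the statement to $\delta=1-\alpha/2$, where your proof goes through.
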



\begin{remark}\label{remark_fclt_i.i.d}
	In particular, when the mark sequence is i.i.d. in view of Remark \ref{remark_iid of mark sequence}, the resulting process under the multi-scaling regime is accumulation of two Gaussian processes $\{\mathcal W(t), t\geq 0\}$ and $\{\mathcal B(t), t\geq 0\}$, and admits the following form
	\begin{equation}
	\widehat{N}^{mH}(t)=\mathcal W\left (t \right)+\widehat{\mathcal{B}}^{mH}(t), t\geq 0,
	\end{equation}
	where intensity of the Gaussian noise $\{\mathcal W(t), t\geq 0\}$ is given by $\frac{\mu^{mH}}{1-||\Psi||_{\mathbb{L}^{1}}}t$, provided the deterministic convergence of Remark \ref{remark_LLN_i.i.d_mark} holds in $\md([0, \infty), \mathbb{R})$, which is established by \cite{horst2021functional}.

\end{remark}

\begin{remark}\label{remark_fclt_simple Hawkes}
	If we consider the simple Hawkes process without considering the mark sequence analogous to Remark \ref{remark_LLN_simple Hawkes}, then the limiting process turns out to be a single Gaussian process, which is given by 
\begin{equation*}
\widehat{N}^{mH}(t)=  W \left( \frac{\mu^{mH}}{(1-||\Phi||_{\mathbb{L}^{1}})^3}t \right), \q t\geq 0.
\end{equation*}
where $\{ W(t), t\geq 0\}$ is a Wiener process. This particular form the limiting process coincided with the weak limit of \cite{bacry2013some} under suitable assumptions.
\end{remark}

Similar to marked Hawkes process,  we derive the functional central limit theorems for shot noise process under multi-scaling regime, resulting from the Gaussian approximations of cumulative intensity process by virtue of Theorems \ref{thm_FCLT_intensity_process} and \ref{thm_FCLT_N^{mH}(t)}. In addition, we assume the following deterministic convergences. 


\begin{assumption}\label{assm_theta_FCLT}
Given $\ol{\theta}(.,.)$ in Assumption \ref{assm_theta_FLLN}, there exists a locally integrable function $\widehat{\theta}(.,.)$ on $\mathbb{R}_{+}\times \mathcal E$ such that 
\begin{equation}
   \sup_{0\leq t \leq T} \esp_{z\in \mathcal{E}} \left| n^{1-\delta} \left( \theta_{n}(t,z)- \ol{\theta}(t,z) \right) - \widehat{\theta}(t,z)
   \right| \ra  0, \q \xt{as} \q n\ra \infty.
\end{equation}
\end{assumption}
\begin{theorem}\label{thm_FCLT_Y_n(t)_process}
Suppose the assumptions of Theorem \ref{thm_LLN_Y^{mH}(t)} are satisfied. In addition, the convergences of Assumptions \ref{assm_base_intensity_FCLT}, \ref{assm_pi_FCLT}, \ref{assm_theta_FCLT} hold. Then the diffusion scaled shot noise process $\{\widehat{Y}^{mH}_{n}(t), t\geq 0\}$ converges in distribution to the process $\bm{1}(\delta=1-\alpha/2)\{\widehat{Y}^{mH}(t), t\geq 0\}$  in $\md([0, \infty), \mathbb{R})$ as $n\ra \infty$, where 
\begin{align*}
\widehat{Y}^{mH}(t)=&\mathcal W_{\ol{\theta}}(t) + \int_0^t \int_{\mathcal{E}} \ol{\theta}(t-s,z) \ol{\pi}_{s}(\diff z) \diff \widehat{\mathcal{B}}^{mH}(s)
+\int_0^t \int_{\mathcal{E}} \widehat{\theta}(t-s,z) \ol{\pi}_{s}(\diff z) C^{mH}(s) \diff s\\
&+ \int_0^t \int_{\mathcal{E}} \ol{\theta}(t-s,z) \widehat{\pi}_{s}(\diff z)  C^{mH}(s) \diff s,
\end{align*}
where $\{\mathcal W_{\ol{\theta}}(t), t\geq 0\}$ is the Gaussian noise with variance $\int_0^t \int_{\mathcal E} \ol{\theta}^{2}(t-s,z) C^{mH}(s) \ol{\pi}_{s}(\diff z) \diff s $ and $\{\widehat{B}^{mH}(t), t\geq 0\}$ is the diffusion process represented by (\ref{limit_Z^{mH}(t)_intensity_FCLT}).
\end{theorem}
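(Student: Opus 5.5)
The plan is to decompose $\widehat{Y}^{mH}_{n}$ along the compensated marked Hawkes measure, in the same way Theorem \ref{thm_FCLT_N^{mH}(t)} is obtained from Theorem \ref{thm_FCLT_intensity_process}. By (\ref{def_Y^{mH}_{n}(t)}) and the compensator of $N^{mH}_{n}$, write
\begin{align*}
\ol{Y}^{mH}_{n}(t)-\ol{Y}^{mH}(t)=&\frac{1}{n^{\alpha}}\int_0^t\int_{\mathcal E}\theta_{n}(t-s,z)\,\widetilde{M}_{n}(\diff s,\diff z)
+\int_0^t\int_{\mathcal E}\theta_{n}(t-s,z)\pi^{n}_{s}(\diff z)\bigl(\lambda^{mH}_{n}(s)-C^{mH}(s)\bigr)\diff s\\
&+\int_0^t\int_{\mathcal E}\bigl(\theta_{n}(t-s,z)-\ol{\theta}(t-s,z)\bigr)\pi^{n}_{s}(\diff z)C^{mH}(s)\diff s
+\int_0^t\int_{\mathcal E}\ol{\theta}(t-s,z)\bigl(\pi^{n}_{s}-\ol{\pi}_{s}\bigr)(\diff z)C^{mH}(s)\diff s,
\end{align*}
where $\widetilde{M}_{n}(\diff s,\diff z)=N^{mH}_{n}(\diff s,\diff z)-n^{\alpha}\lambda^{mH}_{n}(s)\pi^{n}_{s}(\diff z)\diff s$. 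After multiplying by $n^{1-\delta}$, these are $I_{n}^{(1)},\dots,I_{n}^{(4)}$, and I would treat each term separately, then combine them by joint convergence and the continuous mapping theorem.

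The third and fourth terms are deterministic. By Assumption \ref{assm_theta_FCLT}, together with the uniform boundedness of $C^{mH}$ on $[0,T]$ (Assumption \ref{assm_base_intensity_FLLN} and (\ref{psi_stability_LLN})) and Assumption \ref{assm_pi_FLLN}, I expect $I_{n}^{(3)}\to\int_0^t\int_{\mathcal E}\widehat{\theta}(t-s,z)\ol{\pi}_{s}(\diff z)C^{mH}(s)\diff s$ uniformly on $[0,T]$. Applying Assumption \ref{assm_pi_FCLT} with the bounded function $f(s,z)=\ol{\theta}(t-s,z)$ gives $I_{n}^{(4)}\to\int_0^t\int_{\mathcal E}\ol{\theta}(t-s,z)\widehat{\pi}_{s}(\diff z)C^{mH}(s)\diff s$. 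The uniformity in $t$ needs care because $t$ enters $f$, so I would use the uniform-in-$t$ form of these assumptions.

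The second term is where Theorem \ref{thm_FCLT_intensity_process} enters. I would rewrite it as $n^{-\alpha/2}n^{1-\delta}\int_0^t g_{n}(t,s)\,\diff\widehat{Z}^{mH}_{n}(s)$ with $g_{n}(t,s)=\int_{\mathcal E}\theta_{n}(t-s,z)\pi^{n}_{s}(\diff z)$. Since $\delta\ge 1-\alpha/2$ is the relevant regime, the prefactor $n^{1-\delta-\alpha/2}$ is $1$ when $\delta=1-\alpha/2$ and tends to $0$ otherwise, which produces the indicator. To pass to the limit $\int_0^t\int_{\mathcal E}\ol{\theta}(t-s,z)\ol{\pi}_{s}(\diff z)\diff\widehat{\mathcal{B}}^{mH}(s)$, I would replace $g_{n}$ by its uniform limit $\ol g$ using Assumptions \ref{assm_theta_FLLN} and \ref{assm_pi_FLLN}. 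I would then integrate by parts in $s$, which requires $\ol g(t,\cdot)$ to have bounded variation or else an approximation by step functions. Finally I would use the continuity of the map $x\mapsto\int_0^t \ol g(t,s)\diff x(s)$ on $\mathbb C$ together with the tightness of $\widehat{Z}^{mH}_{n}$ supplied by Theorem \ref{thm_FCLT_intensity_process}.

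The first term, the martingale noise, is the main obstacle, because for fixed $n$ the integrand depends on $t$, so it is not a martingale in $t$; this is exactly the issue that Proposition \ref{prop_upsilon_tilde_upsilon_L_2} handles. My first attempt would be to replace $\theta_{n}(t-s,z)$ by $\ol{\theta}(t-s,z)$, controlling the $L^2$ error through the predictable quadratic variation $n^{-2\alpha+2(1-\delta)}\int_0^{t}\int_{\mathcal E}|\theta_n-\ol\theta|^2 n^{\alpha}\lambda^{mH}_n\pi^n_s\diff s$, which is $o(1)$ by Assumption \ref{assm_theta_FLLN} and Lemma \ref{lemma_bound of E lambda^{mH}(t)}. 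For the remaining term I would approximate $\ol{\theta}(t-s,z)$ by step functions in $t$, of the form $\sum_k\bm{1}(t\in[t_k,t_{k+1}))\ol{\theta}(t_k-s,z)$. For each fixed $t_k$ this yields a genuine martingale, whose predictable variation converges by Theorem \ref{thm_intensity_LLN} and Assumption \ref{assm_pi_FLLN}. The martingale FCLT of \cite{ethier2009markov} then gives finite-dimensional convergence to $\mathcal W_{\ol{\theta}}$ with the stated variance. The jumps are of size $O(n^{-\alpha/2})$ under the critical normalization $n^{1-\delta}=n^{\alpha/2}$, so the Lindeberg condition holds. Tightness in $\md$ would follow from a modulus-of-continuity estimate on $\ol{\theta}$ via Kolmogorov-type moment bounds, and this is where I expect the main difficulty. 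Joint convergence of $I_n^{(1)}$ and $\widehat{Z}^{mH}_{n}$ would come from the joint martingale CLT applied to $(\widetilde M_n,\widetilde{\Upsilon}_n)$. When $\delta>1-\alpha/2$, every stochastic term is scaled by a vanishing factor and Proposition \ref{prop_FLLN_Upsilon_{n}(t)} kills it; in that regime I would take the indicator in the statement to multiply the whole limit.
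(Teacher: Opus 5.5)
Your four-term decomposition is the paper's ($\widehat{Y}^{mH}_{n,1},\dots,\widehat{Y}^{mH}_{n,4}$), and your treatment of the two deterministic terms matches it. You depart from the paper on the noise term, and for good reason. The paper declares $\widehat{Y}^{mH}_{n,1}$ an $\mathscr{F}_{n}$-square-integrable martingale and applies the martingale FCLT directly, although its integrand $\theta_{n}(t-s,z)$ depends on $t$. It also passes from marginal to joint convergence via \cite{whitt2002stochastic}, although terms 1 and 2 are driven by the same Poisson measure. Your step-function reduction combined with a joint martingale CLT for $(\widetilde{M}_{n},\widetilde{\Upsilon}_{n})$ is the right kind of repair. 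It would also give the correct covariance $\int_0^{t\wedge s}\int_{\mathcal E}\ol{\theta}(t-v,z)\ol{\theta}(s-v,z)C^{mH}(v)\ol{\pi}_{v}(\diff z)\diff v$ instead of (\ref{QV_W_theta_Y_mH_FCLT}).

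\textbf{Gap in the noise term.} The step you defer cannot be carried out under the stated hypotheses. Two things need a quantitative modulus of continuity of $t\mapsto\ol{\theta}(t,z)$ (e.g.\ in $\mathbb{L}^{2}(C^{mH}(s)\ol{\pi}_{s}(\diff z)\diff s)$): the approximation error $\sup_{t\in[t_k,t_{k+1})}|\ol{\theta}(t-s,z)-\ol{\theta}(t_k-s,z)|$, and the Kolmogorov-type increment bound you need for tightness. Assumptions \ref{assm_theta_FLLN} and \ref{assm_theta_FCLT} give only local boundedness and uniform convergence. For a merely bounded measurable kernel, the Gaussian field $\mathcal{W}_{\ol{\theta}}$ need not even have a c\`adl\`ag version, so a time-regularity hypothesis has to be added.

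The same problem affects your replacement of $\theta_{n}$ by $\ol{\theta}$. The bracket estimate controls it only for each fixed $t$. At $\delta=1-\alpha/2$, Assumption \ref{assm_theta_FCLT} makes $\theta_{n}-\ol{\theta}$ exactly of order $n^{-\alpha/2}$. Hence the pathwise bound $n^{-\alpha/2}\|\theta_{n}-\ol{\theta}\|_{T,\mathcal E}\bigl(N^{mH}_{n}(T)+n^{\alpha}\int_0^T\lambda^{mH}_{n}(s)\diff s\bigr)$ is only $O_{\prob}(1)$. Uniformity in $t$ again needs regularity, now of $\widehat{\theta}$.

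\textbf{Gap in term 2.} You cannot replace $g_{n}$ by $\ol{g}$ inside $\int_0^t\cdot\,\diff\widehat{Z}^{mH}_{n}(s)$ using uniform convergence alone, nor approximate $\ol{g}$ uniformly by step functions. The reason is that $\widehat{Z}^{mH}_{n}$ has derivative $n^{\alpha/2}(\lambda^{mH}_{n}-C^{mH})$, and $\lambda^{mH}_{n}(s)=\lambda^{mH}(n^{\alpha}s)$ fluctuates at order one on the fast time scale. So the total variation of $\widehat{Z}^{mH}_{n}$ on $[0,T]$ is of order $n^{\alpha/2}$. The replacement error is then $\|g_{n}-\ol{g}\|\,O_{\prob}(n^{\alpha/2})$. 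This is only $O_{\prob}(1)$, even with the rates of Assumptions \ref{assm_pi_FCLT} and \ref{assm_theta_FCLT}.

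You must instead integrate by parts with $g_{n}$ itself. That needs $\sup_{n}\sup_{t\leq T}\mathrm{Var}_{s\in[0,t]}\,g_{n}(t,s)<\infty$ and continuity in $t$ of the resulting map, which again are hypotheses not in the statement. For the same reason $\widehat{Z}^{mH}_{n}$ is in general not P-UT, so the paper's appeal to \cite{jacod2013limit} does not close this step either.

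\textbf{The indicator.} Your last sentence contradicts your own computation. $I^{(3)}_{n}$ and $I^{(4)}_{n}$ converge to (in general non-zero) deterministic limits whether or not $\delta=1-\alpha/2$. So the indicator can only multiply the stochastic components, which is also what the joint limit at the end of the paper's proof gives. Neither argument delivers $\bm{1}(\delta=1-\alpha/2)\widehat{Y}^{mH}$ as stated.
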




\section{Proofs of  Main Limit Theorems}\label{section_proofs_main_thoerems}
In this section, we provide the detailed proof of the main results including functional law of large numbers and functional central limit theorem for the marked Hawkes processes and the associated shot noise processes in Section \ref{section_FLLN_proofs} and \ref{section_FCLT_proofs} respectively, under the given hypotheses, which are relied on the results derived in Section \ref{section_proofs of auxiliary results}. 

\subsection{Proofs of Auxiliary Results}\label{section_proofs of auxiliary results}
This section provides the proofs of Lemmas \ref{lemma_bound of E lambda^{mH}(t)}, \ref{lemma_conv_phi_wrt_measure}, \ref{lemma_intensity_LLN_expected}, 
\ref{lemma_tau_convergence_tau_infty}, 
\ref{lemma_martingale_upsilon_{n}(t)}, Propositions 
\ref{prop_upsilon_tilde_upsilon_L_2}, 
\ref{prop_FLLN_Upsilon_{n}(t)} in detail, which plays essential roles to prove the main results in following Sections \ref{section_FLLN_proofs} and \ref{section_FCLT_proofs}.

\begin{proof}[Proof of Lemma \ref{lemma_bound of E lambda^{mH}(t)}]
Fix $T > 0$. It follows from (\ref{def_expected_intensity_w.r.t gamma}) that 
	\begin{align}\label{proof_lambda_bound_1}
	\sup_{0\leq t\leq T} \left| \E (\lambda ^{mH}(t)) \right|
	 &\leq \sup_{0\leq t \leq T} |\mu^{mH}(t)| + \sup_{0\leq t \leq T} 
	 \left| \int_0^t \Gamma (t,s) \mu^{mH}(s) \diff s\right|
     \notag \\
	 & \leq ||\mu^{mH}||_{T} \left(1+  \sup_{0\leq t \leq T} 
	  \int_{0}^{t} \left|\Gamma (t,s)\right|  \diff s\right).
    \end{align}
The subcritical condition (\ref{psi_subcritical_condition}) controls the future influence of an event occurred at time $s$ and therefore yields
\begin{equation*}
    \sup_{s\geq 0} \int_{s}^{\infty} |\Gamma(t,s) | \diff t < \infty.
\end{equation*}
However, in order to obtain the finiteness of $\E (\lambda ^{mH}(t)$ uniformly on compact sets, we require 
\begin{equation}\label{gamma_subcritical_condition}
    \sup_{0\leq t \leq T} \int_{0}^{t} |\Gamma(t,s) |\diff s <\infty.
\end{equation}
In particular, for convolution type kernel entity $\psi(t,s):=\psi(t-s)$, the condition (\ref{psi_subcritical_condition}) directly implies 
\begin{equation*}
       \sup_{0\leq t \leq T} \int_{0}^{t} |\psi(t,s) |\diff s = \sup_{0\leq t \leq T} \int_{0}^{t} |\psi(s)| \diff s< \int_{0}^{\infty} |\psi(s)| \diff s< 1,  
\end{equation*}
and hence (\ref{gamma_subcritical_condition}). But in general for the non-convolution kernel $\psi(t,s)$, we additionally assume 
\begin{equation*}
    \rho_{T}:= \sup_{0\leq t \leq T} \int_{0}^{t} |\psi(t,s) |\diff s <1.
\end{equation*}
By (\ref{def_Gamma(t,s)}), we obtain 
\begin{align*}
    \sup_{0\leq t \leq T} \int_{0}^{t} |\Gamma(t,s) |\diff s 
    &\leq \sup_{0\leq t \leq T} \int_{0}^{t} |\psi(t,s) |\diff s + \sup_{0\leq t \leq T} \int_{0}^{t} \left|\Gamma(t,u) \right| \left| 
    \int_{0}^{u} \psi(u,s) \diff s\right| \diff u \\
    &\leq \rho_{T} \left( 1+ \sup_{0\leq t \leq T} \int_{0}^{t} |\Gamma(t,u) |\diff u \right).
\end{align*} 
It implies  
\begin{equation}\label{bound_1}
    \sup_{0\leq t \leq T} \int_{0}^{t} |\Gamma(t,s) |\diff s 
    \leq \frac{\rho_{T}}{1-\rho_{T}} ,
\end{equation} 
and consequently concludes the proof using the relation (\ref{proof_lambda_bound_1}) by virtue of Assumption \ref{assm_initial_intensity_bound} and (\ref{bound_1}).
\end{proof}

\begin{proof}[Proof of Lemma \ref{lemma_conv_phi_wrt_measure}]
    Recall that $\ol{\phi}_n(s-v,z)=0$ for $s<v$. Therefore $\int_0^t \ol{\phi}_{n}(s-v,z) \diff s\equiv \int_{v}^{t} \ol{\phi}_{n}(s-v,z) \diff s$, $t\geq v\geq 0$.
    Consider the difference
    \begin{align*}
        \bigg|\int_0^t \int_{\mathcal{E}} \ol{\phi}_{n}(s-v,z) \pi_{v}^{n}(\diff z) \diff s- \int_{\mathcal{E}}\Phi(z) \ol{\pi}_{v}(\diff z)  \bigg|
        &=  \bigg|\int_{n^{\alpha}v} ^{n^{\alpha}t} \int_{\mathcal{E}} \phi(s-n^{\alpha}v,n^{\beta}z) \pi^{n}_{v}(\diff z) \diff s- \int_{\mathcal{E}}\Phi(z) \ol{\pi}_{v}(\diff z)  \bigg|\\
        &= \bigg|\int_0^{n^{\alpha}(t-v)} \int_{\mathcal{E}} \phi(s, n^{\beta} z) \pi^{n}_{v}(\diff z) \diff s- \int_{\mathcal{E}}\Phi(z) \ol{\pi}_{v}(\diff z)  \bigg|\\
       & \leq \bigg| \int_{0}^{n^{\alpha}(t-v)} \int_{\mathcal{E}} \phi(s, n^{\beta}z) \pi^{n}_{v}(\diff z) \diff s- \int_{\mathcal{E}} 
       \bar{\Phi}(n^{\beta}z) \pi^{n}_{v}(\diff z) \bigg|\\
       &~~~~~~~+ \bigg|\int_{\mathcal{E}} 
       \bar{\Phi}(n^{\beta}z) \pi^{n}_{v}(\diff z)- \int_{\mathcal{E}}\Phi(z) \pi^{n}_{v}(\diff z)  \bigg|\\
       &~~~~~~~+\bigg|\int_{\mathcal{E}} 
       \Phi(z) \pi^{n}_{v}(\diff z)- \int_{\mathcal{E}}\Phi(z) \ol{\pi}_{v}(\diff z)  \bigg|.
    \end{align*}
    Applying Fubini's theorem, we obtain 
    \begin{align*}
    \bigg|\int_0^t \int_{\mathcal{E}} \ol{\phi}_{n}(s-v,z) \pi_{v}^{n}(\diff z) \diff s- \int_{\mathcal{E}}\Phi(z) \ol{\pi}_{v}(\diff z)  \bigg|
    \leq & \esp_{z\in \mathcal{E}}
     \left| \int_{0} ^{n^{\alpha}(t-v)}\phi(s, n^{\beta}z) \diff s- \bar{\Phi}(n^{\beta}z) \right|  \\
     &~~~~+ \esp_{z\in \mathcal{E}} \left| \bar{\Phi}(n^{\beta}z) - \Phi(z)\right|  \\
     &~~~~+ \esp_{z\in \mathcal{E}} |\Phi(z)|
     \int_{\mathcal{E}}   
   \left|   \pi^{n}_{v}(\diff z) - \ol{\pi}_{v}(\diff z) \right| ,
    \end{align*}
    and it goes to zero uniformly on $[\delta , T]$ as $n\ra \infty$ for every $T> \delta>0$ by virtue of (\ref{def_Phi_z_limit_phi}), (\ref{assm_phi_convergence_z_scaling}) and Assumption \ref{assm_pi_FLLN}. The proof is complete.
\end{proof}

\begin{proof}[Proof of Lemma \ref{lemma_intensity_LLN_expected}]
    Consider the difference
	\begin{align}\label{inequality_flln_intensity_1}
	& \E \left( \int_0^t \lambda_{n}^{mH}(s) \diff s\right) - \int_0^{t} C^{mH}(s) \diff s \notag \\
	= &  \int_0^{t} \mu_{n}^{mH} (s) \diff s  - \int_0^t \ol{\mu}^{mH}(s) \diff s 
	+  \int_0^{t} 	\int_0^s \ol{\Gamma}_{n}(s,v)\mu_{n}^{mH} (v) \diff v \diff s - \int_0^t \frac{\Psi(s)}{1-\Psi(s)} \ol{\mu}^{mH}(s)\diff s  \notag \\ 
    = & \int_0^{t} \mu_{n}^{mH} (s) \diff s  - \int_0^t \ol{\mu}^{mH}(s) \diff s 
    +\int_0^{t}\int_0^s \ol{\Gamma}_{n}(s,v) \left(\mu_{n}^{mH} (v) - \ol{\mu}^{mH}(v) \right) \diff v \diff s \notag \\  
    & ~~~~~~ + \int_0^t \int_0^s \ol{\Gamma}_{n}(s,v) \ol{\mu}^{mH}(v)  \diff v \diff s-\int_0^t \frac{\Psi(s)}{1-\Psi(s)} \ol{\mu}^{mH}(s)\diff s \notag \\
     = &  \int_0^{t} \mu_{n}^{mH} (s) \diff s  - \int_0^t \ol{\mu}^{mH}(s) \diff s 
    +\int_0^{t}\int_0^s \ol{\Gamma}_{n}(s,v) \left(\mu_{n}^{mH} (v) - \ol{\mu}^{mH}(v) \right) \diff v \diff s \notag \\ 
    & ~~~~~~~~ + \int_0^t \left( \int_{v}^{t} \ol{\Gamma}_{n}(s,v) \diff s - \frac{\Psi(v}{1-\Psi(v)} \right) \ol{\mu}^{mH}(v)   \diff v.
	\end{align} 
    	To prove the assertion, it suffices to show that the each difference term in (\ref{inequality_flln_intensity_1}) converges to zero on every compact sets $[0,T]$ for all $T>0$ as $n\ra \infty$. The convergence of the first part  of (\ref{inequality_flln_intensity_1}) directly follows from Assumption \ref{assm_base_intensity_FLLN}.

      For the second part of (\ref{inequality_flln_intensity_1}), Fubini's theorem results in
        \begin{align}
            &\sup_{0\leq t \leq T} \left|\int_0^{t}\int_0^s \ol{\Gamma}_{n}(s,v) \left(\mu_{n}^{mH} (v) - \ol{\mu}^{mH}(v) \right) \diff v \diff s \right|\notag \\&
             =   \sup_{0\leq t \leq T}  \left|\int_0^{t}\int_v^t \ol{\Gamma}_{n}(s,v) \left(\mu_{n}^{mH} (v) - \ol{\mu}^{mH}(v) \right) \diff s \diff v \right| \notag \\
            & ~~	\leq  \sup_{\substack{0\leq v \leq t \leq T }}  \left| \int_0^t \ol{\Gamma}_{n} (s, v)  \diff s \right|  \times   \sup_{0\leq t \leq T} \left| \int_0^t  (\mu_{n}^{mH}(v) -\ol{\mu}^{mH}(v)) \diff v  \right| ,
        \end{align}
    which vanishes to zero by Assumption \ref{assm_base_intensity_FLLN} and (\ref{assm_gamma_n_bound}) as $n\ra \infty$.

Similarly, for the third part of (\ref{inequality_flln_intensity_1}), using Fubini's theorem we obtain
\begin{align*}
  & \sup_{0\leq t \leq T} \left|\int_0^t \int_0^s \ol{\Gamma}_{n}(s,v) \ol{\mu}^{mH}(v)  \diff v \diff s-\int_0^t \frac{\Psi(s)}{1-\Psi(s)} \ol{\mu}^{mH}(s)\diff s \right|\\
  = &\sup_{0\leq t \leq T} \left| \int_0^t \left( \int_{v}^{t} \ol{\Gamma}_{n}(s,v) \diff s - \frac{\Psi(v)}{1-\Psi(v)} \right) \ol{\mu}^{mH}(v)   \diff v \right|  \\
  &\leq \sup_{0\leq t \leq T} \left| \int_0^{t} \ol{ \mu} ^{mH}(v)\diff v\right| \times \sup_{\substack{0\leq v \leq t \leq T}}
  \left| \int_{0}^{t} \ol{\Gamma}_{n}(s,v) \diff s - \frac{\Psi(v)}{1-\Psi(v)} \right|,
\end{align*}
which converges to zero uniformly 
on $[\delta , T]$ by Assumption \ref{assm_base_intensity_FLLN} provided $t-v\geq \delta$ for $\delta>0$. The convergence on $[0,\delta]$ follows from (\ref{assm_gamma_LLN}) and (\ref{assm_gamma_n_bound}). The proof is complete.
\end{proof}

\begin{proof}[Proof of Lemma \ref{lemma_tau_convergence_tau_infty}]
By definition (\ref{def_tau_t(v,z)_scaled_new}) of $\mathcal{T}_{n^{\alpha}t} (v,z)$ to prove assertion (\ref{lemma_tau_limit_bound_1}), it suffices to show that 
\begin{enumerate}
\item  \label{tau_bound_first_part}
   $ \sup_{n\geq 1} \sup_{0\leq v \leq t \leq T} \esp_{z\in \mathcal{E}}  \left| \int_v^t  \ol{\phi}_{n}(w-v,z)  \diff w \right|<\infty$.
\item \label{tau_bound_second_part}
    $\sup_{n\geq 1} \sup_{0\leq v \leq t \leq T} \esp_{z\in \mathcal{E}}  \left| \int_v^t \int_w^t \ol{\phi}_{n}(w-v,z)  \ol{\Gamma}_{n}(s,w)  \diff s \diff w \right| <\infty$.
\end{enumerate}
It is straightforward to verify that (\ref{tau_bound_first_part}) and (\ref{tau_bound_first_part}) hold by reason of (\ref{assm_phi_bound_LLN}) and (\ref{assm_gamma_n_bound}).


     In order to prove the second part (\ref{lemma_tau_conv_2}), we define 
    \begin{align*}
        A_{n}(t,w) =& 1 +\int_w^t \ol{\Gamma}_{n}(s,w)  \diff s, \q t \geq w \geq 0\\
        a(w)= & \frac{1}{1- \Psi(w)},
    \end{align*}
    and by (\ref{assm_gamma_LLN}), for any $\eta>0$ and $T>0$ we get
    \begin{equation}\label{convergence_A_n}
        \sup_{\substack{\eta \leq w \leq t \leq T\\t-w \geq \eta}} \left|A_{n}(t,w) - a(w) \right|\ra 0 , \q \text{as} \q n \ra \infty. 
    \end{equation}
    Fix $\delta>0$. For $t-v\geq \delta$, consider the difference
    \begin{align}\label{proof_tau_n_conv_inequality_1}
    \mathcal{T}_{n^{\alpha}t}(v,z) - \mathcal{T}_{\infty}(v,z)    
       =&  \int_v^t \ol{\phi}_{n} (w-v,z) A_n(t,w) \diff w -  \Phi(z) a(v)  \notag \\
      &\leq  \int_{v}^{t} \ol{\phi}_{n}(w-v,z) 
      \bigg(A_{n}(t,w) - a(w) \bigg)  \diff w    
      + \int_{v}^{t} \ol{\phi}_{n}(w-v,z) 
      \left(a(w) -a(v) \right)  \diff w \notag\\
      & ~~~~+ a(v) \left(\int_{v}^{t} \ol{\phi}_{n}(w-v,z)\diff w- \Phi(z)\right).
      \end{align}

      For the first part, fix $\eta\in (0,\delta)$, split the first integral part of (\ref{proof_tau_n_conv_inequality_1}) into
      \begin{align*}
          \mathcal{T}_{1}^{n,t}(v,z) =& \int_{v}^{t-\eta} \ol{\phi}_{n}(w-v,z) \left(A_n(t,w) -a(w) \right)  \diff w ,\\
          \mathcal{T}_{2}^{n,t}(v,z) =& \int_{t-\eta}^{t} \ol{\phi}_{n}(w-v,z) \left(A_{n}(t,w) -a(w) \right)  \diff w .
      \end{align*}
      Now, for $w\in [v,t-\eta]$, the convergence  (\ref{convergence_A_n}) of $A_{n}(t,w)$ together with (\ref{assm_phi_bound_LLN}) yields that $\mathcal{T}_{1}^{n,t}(v,z)$ converges to zero uniformly on $t$ and $z$. For the latter entity $\mathcal{T}_{2}^{n,t}(v,z)$, note that $w-v \in (t-\eta-v, t-v]$ and $t-v\geq \delta$ and $\eta\in (0,\delta)$ together imply $t-\eta-v\geq \delta-\eta>0$. As a consequence, the interval $w\in (t-\eta, t]$ in $\mathcal{T}_{2}^{n,t}(v,z)$ is away from zero, but the concentration of $\ol{\phi}_{n}(.,.)$ from Assumption \ref{assm_phi_conv_LLN} lies near zero only, which results 
      \begin{equation*}
          \lim_{n\ra \infty} \sup_{\substack{0\leq v \leq t\leq T\\t-v\geq \delta}} \esp_{z\in \mathcal{E}} \int_{t-\eta}^{t} \left| \ol{\phi}_{n}(w-v,z) \right|\diff w =0 .
      \end{equation*}
      Hence $\mathcal{T}_{2}^{n,t}(v,z)$ vanishes to zero uniformly. 

    For the second integral part of (\ref{proof_tau_n_conv_inequality_1}), we use the concentration property of $\ol{\phi}_{n}(w-v,z)$ at the diagonal $w=v$ inside the limiting regime. Due to this mass concentration around $w=v$, the entity $a(w)$ eventually behaves as $a(v)$ in the limiting regime and hence, without loss of generality, we can say that 
     \begin{equation*}
     \sup_{\substack{0\leq v \leq t\leq T\\t-v\geq \delta}}
     \esp_{z\in \mathcal{E}}
         \left|\int_{v}^{t} \ol{\phi}_{n}(w-v,z) 
      \left(a(w) -a(v) \right)  \diff w\right| \ra 0 , \; \text{as} \; n\ra \infty.
     \end{equation*}

     Knowing $t-v\geq \delta$, it is easy to verify that the third part of (\ref{proof_tau_n_conv_inequality_1}) converges to zero uniformly on $0\leq v\leq t\leq T$ and $z$ by reason of Assumption \ref{assm_phi_conv_LLN}.

     The proof is complete.      
\end{proof}

\begin{proof}[Proof of Proposition \ref{prop_upsilon_tilde_upsilon_L_2}]
   For every $f\in \mathbb{C}_{b} (\mathcal{E}, \mathbb{R})$, define 
   \begin{equation*}
       \widehat{R}_{n,t}(f)= \frac{1}{n^{\alpha/2}} \int_0^{n^{\alpha}t} \int_{\mathcal{E}} \int_{0}^{\lambda^{mH}(v-)}f(z) \bm{1}(v\leq n^{\alpha}t) \left( \mathcal{T}_{n^{\alpha}t}(v,z) - \mathcal{T}_{\infty}(v,z)\right) M^{P}(\diff v, \diff z, \diff u)
   \end{equation*}
   such that $\widehat{R}_{n,t}(f)= \frac{1}{n^{\alpha/2}} \Upsilon_{n,t}(f) - \frac{1}{n^{\alpha/2}} \widetilde{\Upsilon}_{n,t}(f)$.
   To prove the assertion it suffices to prove that for every $f\in \mathbb{C}_{b} (\mathcal{E}, \mathbb{R})$, $T>0$ and $\epsilon>0$,
   \begin{equation*}
       \lim_{n\ra \infty} \mathbb{P} \left( \sup_{0\leq t \leq T} \left|\widehat{R}_{n,t}(f) \right| > \epsilon\right)=0.
   \end{equation*}
   

   Define 
   \begin{equation*}
       C_{n}([t_1 , t_2]) = \sup_{t_1 \leq v \leq t \leq t_2} \esp_{z\in \mathcal{E}} \left| \mathcal{T}_{n^{\alpha}t}(n^{\alpha}v,z) - \mathcal{T}_{\infty}(n^{\alpha}v,z)\right|, \; t_1, t_2\geq 0.
   \end{equation*}
   Now by change of variables we rewrite  
   \begin{align*}
       &\sup_{0\leq t \leq T} |\widehat{R}_{n,t}(f)|\\
       &= \sup_{0\leq t \leq T}\left| \frac{1}{n^{\alpha/2}} \int_0^{t} \int_{\mathcal{E}} \int_{0}^{\lambda^{mH}_{n}(v-)}f(z) \bm{1}(n^{\alpha}v\leq n^{\alpha}t) \left( \mathcal{T}_{n^{\alpha}t}(n^{\alpha}v,z) - \mathcal{T}_{\infty}(n^{\alpha}v,z)\right) M^{P}(n^{\alpha}\diff v, \diff z, \diff u) \right|\\
       &~~\leq ||f||_{\mathcal{E}} C_{n}([0,T]) 
       \sup_{0\leq t \leq T} \left| \frac{1}{n^{\alpha/2}}  \int_0^{t} \int_{\mathcal{E}} \int_{0}^{\lambda^{mH}_{n}(v-)}
       M^{P}(n^{\alpha}\diff v, \diff z, \diff u)\right|.
   \end{align*}
   By given hypothesis, $ \{\frac{1}{n^{\alpha/2}}  \int_0^{t} \int_{\mathcal{E}} \int_{0}^{\lambda^{mH}_{n}(v-)}
       M^{P}(n^{\alpha}\diff v, \diff z, \diff u), t\geq 0 \}$ is an $\mathscr{F}_{n}$-martingale. Therefore, it follows from Doob's $\mathbb{L}^2$-inequality that 
       \begin{align*}
        \E \left( \sup_{0\leq t \leq T} \left|\frac{1}{n^{\alpha/2}}  \int_0^{t} \int_{\mathcal{E}} \int_{0}^{\lambda^{mH}_{n}(v-)}
       M^{P}(n^{\alpha}\diff v, \diff z, \diff u) \right|^{2}\right) 
       &\leq 
       4 \E \left( \frac{1}{n^{\alpha}} \int_0^{T} \int_{\mathcal{E}} \lambda^{mH}_{n}(v)\pi_{n^{\alpha}v}(\diff z) n^{\alpha} \diff v  \right) \\ 
       &= 4 \E \left( \int_0^{T}  \lambda^{mH}_{n}(v) \diff v\right).
       \end{align*}
       It leads to for every $\epsilon>0$,
       \begin{align}\label{inequality_fclt_intensity_3}
     \mathbb{P} \left( \sup_{0\leq t \leq T} \left|\widehat{R}_{n,t}(f) \right| > \epsilon\right)
      &\leq \frac{1}{\epsilon^2} C_{n}([0,T])^{2} ||f||^{2}_{\mathcal{E}} \E \left( \sup_{0\leq t \leq T} \left|\frac{1}{n^{\alpha/2}}   \int_0^{t} \int_{\mathcal{E}} \int_{0}^{\lambda^{mH}_{n}(v-)}
       M^{P}(n^{\alpha}\diff v, \diff z, \diff u) \right|^{2}\right) \notag \\
       & \leq \frac{1}{\epsilon^2} C_{n}([0,T])^{2} ||f||^{2}_{\mathcal{E}}.  4 \E \left( \int_0^{T}  \lambda^{mH}_{n}(v) \diff v\right) \notag \\
       & \leq \frac{8}{\epsilon^2}  ||f||^{2}_{\mathcal{E}}
       \Bigg( C_{n}([0,\delta])^{2} \E \left( \int_0^{\delta}  \lambda^{mH}_{n}(v) \diff v\right)
       +C_{n}([\delta,T])^{2} \E \left( \int_{\delta}^{T}  \lambda^{mH}_{n}(v) \diff v\right)       \Bigg).
       \end{align} 
       In the end, taking $\delta \downarrow 0$, Lemma \ref{lemma_intensity_LLN_expected} and the convergence   (\ref{lemma_tau_conv_2}) of Lemma \ref{lemma_tau_convergence_tau_infty} respectively result that the both term in (\ref{inequality_fclt_intensity_3}) vanishes to zero as $n\ra \infty$. 
\end{proof}

	\begin{proof}[Proof of Lemma \ref{lemma_martingale_upsilon_{n}(t)}]
	Recall that the compensated Poisson measure $M^{P} (\diff v, \diff z, \diff u)$ is a martingale measure with the intensity measure $\diff v \pi_{v}(\diff z) \diff u$. To prove the assertion, by virtue of Theorem T8($\beta$) in \cite{bremaud1981} it suffices to show that for each $n\in \mathbb{N}$,
	\begin{enumerate}[(i)]
		\item \label{condition_1_martingale}
		$\mathcal T_{\infty}(v,z)$ is locally bounded on $\mathbb{R}_{+}\times \mathcal E$.
		\item \label{condition_2_martingale}
		$\E \int_0^{n^{\alpha}t} \int_{\mathcal E}  \left| f(z)  \mathcal T_{\infty}(v,z) \lambda^{mH}(v) \right|  \pi_{v}(\diff z) \diff v < \infty$, for all $t\geq 0$.
	\end{enumerate}
	By given hypotheses, the condition (\ref{condition_1_martingale}) directly follows from 
    Assumption \ref{assm_phi_conv_LLN} and (\ref{assm_gamma_LLN}). Similarly, Lemma \ref{lemma_bound of E lambda^{mH}(t)} together with Assumption \ref{assm_phi_conv_LLN} and (\ref{assm_gamma_LLN}) imply the second condition (\ref{condition_2_martingale}).

   The quadratic variation of the martingale $\widetilde{\Upsilon}_{n,t}(f)$ can be calculated by the following relation in virtue of Lemma 5.7 in \cite{vandervaart1996},
	\begin{align*}
	\langle \widetilde{\Upsilon}_{n,.}(f), \widetilde{\Upsilon}_{n,.}(g) \rangle (t)
    &= 
	\int_0^{n^{\alpha}t} \int_{\mathcal E} \int_0^{\lambda^{mH}(v-)} f(z) g(z) \mathcal{T}^{2}_{\infty}(v,z) \langle M^{P}(\diff v, \diff z, \diff u) \rangle \\
	&=	 \int_0^{n^{\alpha}t} \int_{\mathcal E}  f(z) g(z) \mathcal{T}_{\infty}^{2}(v,z) \lambda^{mH}(v)  \pi_v(\diff z) \diff v.
	\end{align*}
	The proof is complete.
\end{proof}	


\begin{proof}[Proof of Proposition \ref{prop_FLLN_Upsilon_{n}(t)}]
It is already proved in Lemma \ref{lemma_martingale_upsilon_{n}(t)} that	$\{\frac{1}{n^{\alpha/2+\eta}} \widetilde{\Upsilon}_{n}(t), t\geq 0\}$ is an $\mathscr{F}_{n}$-square integrable martingale with predictable quadratic variation process
	\begin{equation*}
	\frac{1}{n^{\alpha+2\eta}} \langle \widetilde{\Upsilon} _{n} \rangle (t)
	= \frac{1}{n^{\alpha+2\eta}}  \int_0^{n^{\alpha}t} \int_{\mathcal E}   \mathcal{T}_{\infty}^{2}(v,z) \lambda^{mH}(v)  \pi_v(\diff z) \diff v. 
	\end{equation*}
	To prove the assertion, it is enough to show that the entity $\frac{1}{n^{\alpha+2\eta}} \langle \widetilde{\Upsilon}_{n} \rangle (t)$ converges to zero in probability for every $t\geq 0$ as $n\ra\infty$.  On that account, for every $\theta>0$, using Markov's inequality we obtain 
	\begin{align}\label{inequality_upsilon_LLN}
     &	\prob \left( \left|\frac{1}{n^{\alpha+2\eta}}  \int_0^{n^{\alpha}t} \int_{\mathcal E}   \mathcal{T}_{\infty}^{2}(v,z) \lambda^{mH}(v)  \pi_v(\diff z) \diff v \right|>\theta \right) \notag \\
     & \leq 
	\frac{1}{\theta} \frac{1}{n^{\alpha+2\eta}}  \int_0^{n^{\alpha}t} \int_{\mathcal E}   \mathcal{T}_{\infty}^{2}(v,z) \E \lambda^{mH}(v)  \pi_v(\diff z) \diff v.
	\end{align}
	The boundedness of $\E \lambda^{mH}(t)$ by Lemma \ref{lemma_bound of E lambda^{mH}(t)} and of  $ \mathcal{T}_{\infty}^{2}(v,z)$ together imply that the integral entity on right hand side of (\ref{inequality_upsilon_LLN}) is bounded by $\mathcal{O}(1/n^{2\eta})$, which vanishes to zero as $n\ra \infty$ for all $\eta>0$. In the end, in accordance with Lenglart and Rebolledo inequality in view of Theorem 1.9.3 in \cite{liptser2012theory}, it follows that for every $T>0$ and $\epsilon>0$, $\theta>0$, 
	\begin{align*}
	\prob \left( \sup_{0\leq t \leq T}	\frac{1}{n^{\alpha/2+\eta}} |\widetilde{\Upsilon}_{n}(t)|>\epsilon \right) \leq & \prob 	\left( \frac{1}{n^{\alpha+2\eta}}  \langle  \widetilde{\Upsilon}_{n} \rangle (T) >\theta \right) +\frac{\theta }{\epsilon ^2}\\
	& 	\leq \frac{1}{\theta} \frac{1}{n^{\alpha+2\eta}}  \int_0^{n^{\alpha}T} \int_{\mathcal E}   \mathcal{T}_{\infty}^{2}(v,z) \E \lambda^{mH}(v)  \pi_v(\diff z) \diff v+ \frac{\theta }{\epsilon ^2}.
	\end{align*}
	Taking $n\ra \infty$, the above inequality concludes the requisite convergence. 
\end{proof}

\subsection{Proof of Functional Law of Large Number Theorems}\label{section_FLLN_proofs}
In this section, we particularly focus on the derivation of the functional law of large numbers of the marked Hawkes process $\{N^{mH}_{n}(t), t \geq 0\}$ along with the intensity process $\{\lambda ^{mH}_{n}(t), t\geq 0\}$. In addition, we provide the functional law of large numbers for the shot noise process $\{Y^{mH}_{n}(t), t\geq 0\}$.

\begin{proof}[Proof of Theorem \ref{thm_intensity_LLN}]
	Using definition (\ref{def_Upsilon_{n}(t)}), we write 
	\begin{align} \label{inequality_inetnsity_LLN}
    \int_0^t \lambda _{n}(s)\diff s-\int_0^t C^{mH}(s)\diff s 
    =& \int_0^t \lambda^{mH}_{n}(s)\diff s- \int_0^t \E \lambda ^{mH}_{n}(s) \diff s +  \int_0^t \E \lambda ^{mH}_{n}(s) \diff s
    \notag 
    \\ & ~~~~-\int_0^t C^{mH}_{n}(s) \diff s \notag \\
   =& \frac{1}{n^{\alpha}} \Upsilon_{n}(t)+ \int_0^t \E \lambda ^{mH}_{n}(s) \diff s-\int_0^t C^{mH}(s) \diff s,
\end{align}
where $\Upsilon_{n}(t): = \Upsilon_{n,t}(\bm 1)$. In particular, the convergence of first term of (\ref{inequality_inetnsity_LLN}) is a direct consequences of Propositions  \ref{prop_upsilon_tilde_upsilon_L_2} and  \ref{prop_FLLN_Upsilon_{n}(t)} for $\eta=\alpha/2$. On the other hand, Lemma \ref{lemma_intensity_LLN_expected} yields the convergence of the second part of (\ref{inequality_inetnsity_LLN}) and hence concludes the assertion of the theorem.
\end{proof}

\begin{proof}[Proof of Theorem \ref{thm_LLN_N^{mH}(t)}]
	Given an $\mathscr{B}_{\mathcal E}$-measurable function $f\in \mathbb{C}_{b}(\mathcal{E}, \mathbb{R})$, we define
    \begin{equation}\label{def_N^{mH}_bar_f}
        \overline{N}^{mH}_{t}(f) = \int_0^t \int_{\mathcal{E}} f(z) C^{mH}(s) \ol{\pi}_{s}(\diff z) \diff s.
    \end{equation}
    We consider the difference 
	\begin{align} \label{inequality_flln_N(t)_1}
	&\frac{1}{n^{\alpha}} N^{mH}_{n,t}(f)- \overline{N}^{mH}_{t}(f)  \notag \\
	=& \frac{1}{n^{\alpha}} \int_0^t \int_{\mathcal E}	f(z) M_{n}^{mH} (\diff s,  \diff z)
    + \int_0^t \int_{\mathcal E} f(z) \left(\lambda^{mH}_{n}(s) -C^{mH}(s) \right) \pi^{n}_{s} ( \diff z)\diff s \notag \\  
	&~~~~ + \int_0^t \int_{\mathcal{E}} f(z) C^{mH}(s) \left( \pi^{n}_{s} ( \diff z) - \ol{\pi}_{s}(\diff z)\right) \diff s,
	\end{align}
    where $M^{mH}_{n}(\diff s, \diff z) = N_{n}^{mH} (\diff s, \diff z) - \lambda^{mH}_{n}(s) n^{\alpha} \diff s \pi^{n}_{s}(\diff z) $.
	For every $\mathscr{B}_{\mathcal E}$-measurable bounded function $f$, the double integral process $\{ \frac{1}{n^{\alpha}} \int_0^t \int_{\mathcal E} f(z) M_{n}^{mH} (\diff s,  \diff z), t\geq 0\}$ is an $\mathscr{F}_{n}$-martingale with quadratic variation process  
	\begin{align*}
	& \left \langle  \frac{1}{n^{\alpha}}  \int_0^. \int_{\mathcal E} f(z) M_{n}^{mH} (\diff s,\diff z), \frac{1}{n^{\alpha}}  \int_0^. \int_{\mathcal E} g(z) M_{n}^{mH} (\diff s, \diff z) \right \rangle (t)\\
	&= \frac{1}{n^{\alpha}} \int_0^t \int_{\mathcal E} f(z)g(z) \lambda^{mH}_{n}(s)  \pi^{n}_{s}(\diff z) \diff s.
	\end{align*}
	In view of Lenglart and Rebolledo inequality (see Theorem 1.9.3 of \cite{liptser2012theory}), for every $T>0$ and $\epsilon>0, \theta>0$, we get 
	\begin{align}\label{proof_inequality_N_Mh_process}
	&  \prob \left(\sup_{0\leq t \leq T} \frac{1}{n^{\alpha}} \left|  \int_0^t \int_{\mathcal E}	f(z) M_{n}^{mH} (\diff s,  \diff z) \right| >\epsilon \right) \notag \\
	&	\leq \prob\left(  \frac{1}{n^{\alpha}}	\int_0^T \int_{\mathcal E} f^2(z) \lambda^{mH}_{n}(s)   \pi^{n}_{s}(\diff z) \diff s>\theta 	\right)+\frac{\theta}{\epsilon^2} \notag \\
	& \leq \frac{1}{\theta} \frac{1}{n^{\alpha}}\int_0^T \int_{\mathcal E} f^2(z) \E \lambda^{mH} _{n} (s) \pi^{n}_{s}(\diff z) \diff s 	+\frac{\theta}{\epsilon^2}	.
	\end{align} 
    Now, in view of Assumption \ref{assm_pi_FLLN} and Theorem \ref{thm_intensity_LLN}, $\int_0^t \int_{\mathcal E} f^2(z) \E \lambda^{mH} _{n} (s) \pi^{n}_{s}(\diff z) \diff s $ converges to $\int_0^t \int_{\mathcal E} f^2(z) C^{mH} (s) \ol{\pi}_{s}(\diff z) \diff s $ uniformly on compact sets. Therefore, the first double integral term of (\ref{proof_inequality_N_Mh_process}) is bounded by $\mathcal{O}(1/n^{\alpha})$, which vanishes to zero as $n\ra \infty$. It essentially shows by reason of Lenglart and Rebolledo inequality that the martingale part of (\ref{inequality_flln_N(t)_1}) converges to zero process in probability uniformly on compact sets as $n\ra \infty$.

    For the second integral part of (\ref{inequality_flln_N(t)_1}), we obtain
    \begin{align*}
     &\prob \left( \sup_{0\leq t \leq T} \left|  \int_0^t \int_{\mathcal E} f(z) \left(\lambda^{mH}_{n}(s) -C^{mH}(s) \right) \pi^{n}_{s}(\diff z) \diff s \right|>\epsilon \right)  \\
     &\leq 
          \prob \left( ||f||_{\mathcal{E}}\sup_{0\leq t \leq T} \left|   \int_0^t  \left(\lambda^{mH}_{n}(s) -C^{mH}(s) \right)  \diff s \right|>\epsilon \right) ,
    \end{align*}
    and the latter probability vanishes to zero as $n\ra \infty$ by virtue of Theorem \ref{thm_intensity_LLN}.

	To conclude the assertion of the theorem, it remains to show that 
	\begin{equation*}
	\lim_{n\ra \infty} \prob \left( \sup_{0\leq t \leq T} \left|  \int_0^t \int_{\mathcal E} f(z) C^{mH}(s) \left( \pi^{n}_{s} ( \diff z) - \ol{\pi}_{s}(\diff z) \right) \diff s \right|>\epsilon \right) =0,
	\end{equation*}
    which immediately follows from Assumption \ref{assm_pi_FLLN}.

	Substituting $f\equiv \bm 1$ in the representation (\ref{def_N^{mH}_bar_f}) and applying the continuous mapping theorem, we conclude the proof.
\end{proof}

\begin{proof}[Proof of Theorem \ref{thm_LLN_Y^{mH}(t)}]

	Consider the difference 
	\begin{align}\label{proof_Y_mH_LLN_1}
	&\frac{1}{n^{\alpha}} Y^{mH}_{n}(t)- \ol {Y}^{mH}(t) \notag \\&
	= 	 \frac{1}{n^{\alpha}} \int_0^t \int_{\mathcal E} \theta_{n}(t- s,z) M_{n}^{mH}( \diff s, \diff z) + 	\int_0^t  \int_{\mathcal E} \theta_{n}(t-s,z) 
    \left (\lambda^{mH}_{n}(s) - C^{mH}(s) \right) \pi^{n}_{s}(\diff z) \diff s \notag \\
	& ~~~~~+\int_0^t C^{mH}(s)    \left( \int_{\mathcal E} \theta_{n}(t-s,z)  \pi^{n}_{s}(\diff z) - \int_{\mathcal E} \ol{\theta}(t-s,z) \ol{\pi}_{s}(\diff z) \right) \diff s .
	\end{align}

     For the first part $\{ \frac{1}{n^{\alpha}} \int_0^t \int_{\mathcal E} \theta_{n}(t- s,z) M_{n}^{mH}( \diff s, \diff z), t\geq 0 \}$ of (\ref{proof_Y_mH_LLN_1}), we apply the martingale argument used in the earlier proof of Theorem \ref{thm_LLN_N^{mH}(t)}. 
     By definition, $\{ \frac{1}{n^{\alpha}} \int_0^t \int_{\mathcal E} \theta_{n}(t- s,z) M_{n}^{mH}( \diff s, \diff z), t\geq 0 \}$ is an $\mathscr{F}_{n}$-square integrable martingale with quadratic variation process 
	\begin{equation*}
	\left \langle  \frac{1}{n^{\alpha}} \int_0^. \int_{\mathcal E} \theta_{n}(t- s,z) M_{n}^{mH}( \diff s, \diff z) \right \rangle (t)= \frac{1}{n^{\alpha}} \int_0^t
	\int_{\mathcal E} \theta_{n}^{2}(t-s,z) \lambda ^{mH}_{n}(s) \pi_{s}^{n}(\diff z) \diff s.
	\end{equation*}
    Now, by Assumptions \ref{assm_pi_FLLN} and \ref{assm_theta_FLLN} and Theorem \ref{thm_intensity_LLN}, we obtain 
    \begin{align*}
    \lim_{n\ra \infty} \prob \left( \sup_{0\leq t \leq T} \left| \int_0^t
	\int_{\mathcal E} \theta_{n}^{2}(t-s,z) \lambda ^{mH}_{n}(s) \pi_{s}^{n}(\diff z) \diff s- \int_0^t
	\int_{\mathcal E} \ol{\theta}^{2}(t-s,z) C^{mH} (s) \ol{\pi}_{s} (\diff z) \diff s\right|
    \right) =0 ,
    \end{align*}
    which concludes that the quadratic variation entity $\big \langle  \frac{1}{n^{\alpha}} \int_0^. \int_{\mathcal E} \theta_{n}(t- s,z) M_{n}^{mH}( \diff s, \diff z) \big \rangle (t)$ converges to zero in probability for all $t\geq 0$ as $n\ra \infty$. Analogous to the previous proof,  this convergence is sufficient to establish that the martingale  $\{\frac{1}{n^{\alpha}} \int_0^t \int_{\mathcal E} \theta_{n}(t- s,z) M_{n}^{mH}( \diff s, \diff z), t\geq 0 \}$ converges to zero process in probability uniformly on compact sets as $n\ra  \infty$ as a consequence of Theorem 1.9.3 of \cite{liptser2012theory}.

For the second part of (\ref{proof_Y_mH_LLN_1}), 
\begin{align*}
&\prob \left( \sup_{0\leq t \leq T} \left| \int_0^t  \int_{\mathcal E} \theta_{n}(t-s,z) 
    \left (\lambda^{mH}_{n}(s) - C^{mH}(s) \right) \pi^{n}_{s}(\diff z) \diff s  \right| > \epsilon  \right)\\
   & \leq \prob \left( \sup_{0\leq t \leq T} \left| \int_0^t   \esp_{z\in \mathcal{E}} |\theta_{n}(t-s,z) |
    \left (\lambda^{mH}_{n}(s) - C^{mH}(s) \right) \diff s  \right| > \epsilon  \right), 
\end{align*}
which converges to zero as $n\ra \infty$ by the given hypothesis of $\theta_{n}(t,z)$ in Assumption \ref{assm_theta_FLLN} and  Theorem \ref{thm_intensity_LLN}.

The third part of (\ref{proof_Y_mH_LLN_1}) leads to 
\begin{align*}
&\prob \left( \sup_{0\leq t \leq T} \left| \int_0^t C^{mH}(s)    \left( \int_{\mathcal E} \theta_{n}(t-s,z)  \pi^{n}_{s}(\diff z) - \int_{\mathcal E} \ol{\theta}(t-s,z) \ol{\pi}_{s}(\diff z) \right) \diff s . \right| > \epsilon  \right)\\
&\leq  \prob \left( \sup_{0\leq t \leq T} \left| \int_0^t C^{mH}(s)   \int_{\mathcal E} \left( \theta_{n}(t-s,z) -\ol{\theta}(t-s,z) \right)   \pi^{n}_{s}(\diff z)  \right| > \frac{\epsilon}{2}  \right) \\
&~~~~~+  \prob \left( \sup_{0\leq t \leq T} \left| \int_0^t C^{mH}(s)   \int_{\mathcal E} \ol{\theta}(t-s,z)  \left( \pi^{n}_{s}(\diff z) - \ol{\pi}_{s}(\diff z) \right)  \right| > \frac{\epsilon}{2}  \right) 
\end{align*}
which converges to zero as $n\ra \infty$ by virtue of Assumption \ref{assm_pi_FLLN} and \ref{assm_theta_FLLN}. Finally, the direct application of the continuous mapping theorem concludes the convergence of (\ref{proof_Y_mH_LLN_1}).
\end{proof}

\subsection{Proof of Functional Central Limit Theorems}\label{section_FCLT_proofs}
In this section, we prove the functional central limit theorems for the processes of interests under the multi-scaling regime. Before proceeding to the main proofs, we prove the weak convergence of $\{\widehat{\Upsilon}_{n}(f)(t), t\geq 0\}$ in $\md ([0, \infty), \mathbb{R})$ in Proposition \ref{prop_fclt_upsilon_{n}(t)}.

  \begin{proof}[Proof of Proposition \ref{prop_fclt_upsilon_{n}(t)}]
  In order to establish the weak convergence of $\{\widehat{\Upsilon}_{n,t}(f), t\geq 0\}$, we prove the following.
  \begin{enumerate}[(1)]
    \item \label{proof_upsilon_weak_conv_1}
    $\left \{\frac{1}{n^{\alpha/2}}\widetilde{\Upsilon}_{n,t}(f), t\geq 0\right\}$ converges in distribution to a Gaussian process $\{\mathcal{B}_{t}(f)) , t \geq 0\}$ in $\mathbb{D}\left([0, \infty), \mathbb {R} \right)$ as $n\ra \infty$, where 
    \begin{align}
    \mathbb{E} ( \mathcal{B}_{t}(f)) = 0 , \quad 
    \mathbb{E} ( \mathcal{B}_{t}(f) \mathcal{B}_{t}(g) ) 
    = \int_0^t \int_{\mathcal{E}} f(z) g(z) \mathcal{T}^2_{\infty}(v,z) C^{mH}(v) \ol{\pi}_{v}(\diff z) \diff v. 
    \end{align}
    \item \label{proof_upsilon_weak_conv_2}
    For every $f\in \mathbb{C}_{b}(\mathcal{E}, \mathbb{R})$, $T>0$ and $\epsilon>0$,
    \begin{equation}
        \lim_{n\ra \infty} \mathbb{P} \left(\sup_{0 \leq t \leq T} \left| \widehat{\Upsilon}_{n,t}(f) - \frac{1}{n^{\alpha/2}}\widetilde{\Upsilon}_{n,t}(f)
        \right| > \epsilon \right) =0 .
    \end{equation} 
\end{enumerate}
  By view of Theorem 3.1 of \cite{billingsley1999convergence}, (\ref{proof_upsilon_weak_conv_1}) and (\ref{proof_upsilon_weak_conv_2}) together yield the requisite convergence of $\{\widehat{\Upsilon}_{n,t}(f), t\geq 0\}$ in $\mathbb{D}\left([0, \infty), \mathbb {R} \right)$ embedded with Skorokhod $\jt$ topology. Here, the second assertion (\ref{proof_upsilon_weak_conv_2}) directly follows from Proposition \ref{prop_upsilon_tilde_upsilon_L_2}. Therefore, it  only remains to prove the first assertion (\ref{proof_upsilon_weak_conv_1}).

  To prove the first assertion (\ref{proof_upsilon_weak_conv_1}), we begin with the fact that, in accordance with Lemma \ref{lemma_martingale_upsilon_{n}(t)}, $\left \{\frac{1}{n^{\alpha/2}}\widetilde{\Upsilon}_{n,t}(f), t\geq 0\right \}$ is an $\mathscr{F}_{n}$-square-integrable martingale with quadratic covariation entity
	\begin{align}\label{QV_upsilon_n_t(f,g)}
    \left \langle \frac{1}{n^{\alpha/2}}\widetilde{\Upsilon}_{n,.}(f),\frac{1}{n^{\alpha/2}}\widetilde{\Upsilon}_{n,.}(g) \right \rangle (t)
    = &	\frac{1}{n^{\alpha}} \langle \widetilde{\Upsilon}_{n,.}(f), \widetilde{\Upsilon}_{n,.}(g) \rangle (t) \notag \\
    =&
	\frac{1}{n^{\alpha}} \int_0^{n^{\alpha}t} 
    \int_{\mathcal E}  f(z) g(z) \mathcal T_{\infty}^{2}(v,z) \lambda^{mH}(v)   \pi_v(\diff z) \diff v \notag \\
    =& \int_0^t \int_{\mathcal{E}} f(n^{\beta} z) g(n^{\beta} z)\mathcal T_{\infty}^{2}(n^{\alpha} v, n^{\beta} z) \lambda^{mH}_{n}(v) \pi_{v}^{n}(n^{\beta} \diff z) \diff v  .
	\end{align} 
	Next thing is to derive the limit of the predictable covariation process $\{\langle \frac{1}{n^{\alpha/2}} \widetilde{\Upsilon}_{n,.}(f), \frac{1}{n^{\alpha/2}} \widetilde{\Upsilon}_{n,.}(g) \rangle (t), t\geq 0\}$ in $\md([0,\infty), \mathbb{R})$ for given $f,g \in \mathbb{C}_{b}(\mathcal E, \mathbb{R})$. Our aim is to show that 
	\begin{equation*}
	\lim_{n\ra \infty} \prob \bigg(\sup_{0\leq t \leq T} \bigg|  \left \langle \frac{1}{n^{\alpha/2}}\widetilde{\Upsilon}_{n,.}(f),\frac{1}{n^{\alpha/2}}\widetilde{\Upsilon}_{n,.}(g) \right \rangle (t)
    - \int_0^t \int_{\mathcal E} f(z) g(z) \mathcal{T}_{\infty}^2(v,z) C^{mH}(v) \ol{\pi}_{v}(\diff z) \diff v \bigg|>\epsilon \bigg ) =0.
	\end{equation*}
    To prove this, we consider
	\begin{align}\label{inequality_upsilon_fclt}
     &\bigg|  \left \langle \frac{1}{n^{\alpha/2}}\widetilde{\Upsilon}_{n,.}(f),\frac{1}{n^{\alpha/2}}\widetilde{\Upsilon}_{n,.}(g) \right \rangle (t)
      - \int_0^t \int_{\mathcal E} f(z) g(z)\mathcal T_{\infty}^2(v,z) C^{mH}(v) \ol{\pi}_{v}(\diff z) \diff v \bigg| \notag \\
	=& \bigg|  \int_0^t 
	   \int_{n^{\beta}\mathcal E}  f(z) g(z) \mathcal T_{\infty}^{2}(n^{\alpha} v,z) \lambda_{n}^{mH}(v)   \pi_{n^{\alpha}v}(\diff z)  \diff v
      - \int_0^t \int_{\mathcal E} f(z) g(z)\mathcal T_{\infty}^2 (v,z) C^{mH}(v) \ol{\pi}_{v}(\diff z) \diff v \bigg|\notag \\
    \leq & \bigg|  \int_0^t 
	    \int_{n^{\beta}\mathcal E} f(z) g(z) \mathcal T_{\infty}^{2}(n^{\alpha} v,z) \left( \lambda_{n}^{mH}(v)  - C^{mH}(v) \right) \pi_{n^{\alpha}v}(\diff z)  \diff v \bigg| 
    \notag \\
    &~~~~~~ + \bigg|  \int_0^t 
	    \int_{n^{\beta}\mathcal E}  f(z) g(z) \left( \mathcal T_{\infty}^{2}(n^{\alpha} v,z) - \mathcal {T}_{\infty}^{2}( v,z)\right)    C^{mH}(v)   \pi_{n^{\alpha}v}(\diff z)  \diff v  \bigg| \notag \\
    &~~~~~~ + \bigg|  \int_0^t 
	    \int_{n^{\beta}\mathcal E}  f(z) g(z) \mathcal {T}_{\infty}^{2} ( v,z) C^{mH}(v) \left( \pi_{n^{\alpha}v}(\diff z) - \ol{\pi}_{v}(\diff z) \right)  \diff v  \bigg| \notag \\
    \leq & ||f||_{\mathcal{E}} ||g||_{\mathcal{E}}
        \bigg( \bigg| \int_0^t \int_{n^{\beta}\mathcal E} 
        \mathcal T_{\infty}^{2}(n^{\alpha} v,z) \left( \lambda_{n}^{mH}(v)  - C^{mH}(v) \right) \pi_{n^{\alpha}v}(\diff z)  \diff v  \bigg| \notag \\
     &~~~~~~ + \bigg|  \int_0^t 
	    \esp_{z\in \mathcal{E} } \left| \mathcal T_{\infty}^{2}(n^{\alpha} v,z) - \mathcal {T}_{\infty}^{2}( v,z)\right|     C^{mH}(v)   \diff v  \bigg| \notag \\
     & ~~~~~~ +  \bigg|  \int_0^t 
	       \int_{\mathcal E}  \mathcal {T}_{\infty}^{2}(v,z) C^{mH}(v) \left( \pi^{n}_{v}(\diff z) - \ol{\pi}_{v}(\diff z) \right)  \diff v  \bigg| \bigg).
	\end{align}
	As the results of Assumption \ref{assm_pi_FLLN}, Theorem \ref{thm_intensity_LLN} and Proposition \ref{prop_upsilon_tilde_upsilon_L_2}, the above difference entity (\ref{inequality_upsilon_fclt}) converges to zero in probability for all $t\geq 0$ as $n\ra \infty$. Also the expected maximum squared jump of $\left\{\frac{1}{n^{\alpha/2}}\widetilde{\Upsilon}_{n,t}(f), t\geq 0\right\}$ is bounded by $\mathcal{O}(\frac{1}{n^{\alpha}})$, which eventually vanishes to zero as $n\ra \infty$. Finally, substituting $f\equiv \bm 1$ in (\ref{def_Upsilon_{n}_(t)_diffusion}), in accordance with the martingale functional central limit theorem in view of Theorem 7.1.4 of \cite{ethier2009markov}, the requisite weak convergence of $\left \{\frac{1}{n^{\alpha/2}}\widetilde{\Upsilon}_{n,t}(f), t\geq 0\right\}$ follows.
\end{proof}

\begin{proof}[Proof of Theorem \ref{thm_FCLT_intensity_process}]
	By definitions (\ref{def_E(lambda_n(t))}), (\ref{def_Upsilon_{n}_(t)_diffusion}), we write
	\begin{align} \label{inequality_fclt_intensity_1}
    &	n^{\alpha/2} \left(  \int_0^t \lambda _{n}(s)\diff s-\int_0^t C^{mH}(s)\diff s\right) \notag \\ 
	=&n^{\alpha/2} \left( \int_0^t \lambda^{mH}_{n}(s)\diff s- \int_0^t \E \lambda ^{mH}_{n}(s) \diff s\right)+ n^{\alpha/2} \left( \int_0^t \E \lambda ^{mH}_{n}(s) \diff s-\int_0^t C^{mH}(s) \diff s\right) \notag \\
	=& \widehat{\Upsilon}_{n}(t)+ n^{\alpha/2} \left(  \int_0^{t}\mu_{n}^{mH}(s)\diff s-\int_0^t \ol{\mu}^{mH}(s)\diff  s \right)\nonumber\\& 
   \hspace{0.5cm}  + n^{\alpha/2} \left( \int_0^{t} \int_0^{s} \ol{\Gamma}_{n}(s,v)\mu_{n}^{mH}(v)\diff v \diff s -  \int_0^t \frac{\Psi(s)}{1-\Psi(s)} \mu^{mH}(s) \diff s	 \right) \notag \\
    =& \widehat{\Upsilon}_{n}(t)+ n^{\alpha/2} \left(  \int_0^{t}\mu_{n}^{mH}(s)\diff s-\int_0^t \ol{\mu}^{mH}(s)\diff s \right)+ n^{\alpha/2}  \int_0^t \left( \int_{v}^{t} \ol{\Gamma}_{n}(s,v) \diff s - \frac{\Psi(v}{1-\Psi(v)} \right) \ol{\mu}^{mH}(v)   \diff v.
	\end{align}
	Now, by view of Proposition \ref{prop_fclt_upsilon_{n}(t)}, the martingale part $\{\widehat{\Upsilon}_{n}(t), t\geq 0\}$ of (\ref{inequality_fclt_intensity_1}) weakly converges to the Gaussian process $\{\mathcal B(t), t\geq 0\}$ in $\md \left([0,\infty), \mathbb{R} \right)$. Finally, Assumptions \ref{assm_base_intensity_FCLT} and \ref{assm_Gamma_FCLT} and the application of the continuous mapping theorem conclude the requisite convergence.
 \end{proof}

\begin{proof}[Proof of Theorem \ref{thm_FCLT_N^{mH}(t)}]
     Firstly, for given $\mathscr{B}_{\mathcal E}$-measurable functions $f, g \in \mathbb{C}_{b}( \mathcal{E}, \mathbb{R})$, we decompose 
	\begin{align}\label{proof_N_mH_FCLT_1}
	\widehat{N}^{mH}_{n,t}(f)=
	&n^{1-\delta} \left( \frac{1}{n^{\alpha}} N^{mH}_{n,t}(f)- \overline{N}^{mH}(t)(f)\right) \notag \\
	&= n^{1-\delta -\alpha} \int_0^t \int_{\mathcal E} f(z) M_{n} ^{mH}(\diff s , \diff z) \notag \\
	  & ~~+ n^{1-\delta}   \int_0^t \int_{\mathcal E} f(z)
    \left( \lambda^{mH}_{n}(s) - C^{mH}(s) \right) \pi^{n}_{s} (\diff z)  \diff s\nonumber \\&
      \hspace{0.5cm}+ n^{1-\delta} \int_0^t \int_{\mathcal{E}} f(z) C^{mH}(s) \left( \pi^{n}_{s}(\diff z)- \ol{\pi}_{s}(\diff z ) \right)  \diff s  .
	\end{align}
	Let us define
	\begin{align*}
	\widehat{N}^{mH}_{n,1,t}(f) = &  n^{1-\delta -\alpha} \int_0^t \int_{\mathcal E} f(z) M_{n} ^{mH}(\diff s , \diff z) ,\\
	\widehat{N}^{mH}_{n,2,t}(f) =& n^{1-\delta}   \int_0^t \int_{\mathcal E} f(z)
    \left( \lambda^{mH}_{n}(s) - C^{mH}(s) \right) \pi^{n}_{s} (\diff z)  \diff s, \\
    \widehat{N}^{mH}_{n,3,t}(f) =& n^{1-\delta} \int_0^t \int_{\mathcal{E}} f(z) C^{mH}(s) \left( \pi^{n}_{s}(\diff z)- \ol{\pi}_{s}(\diff z ) \right)  \diff s, 
	\end{align*}
    such that $\widehat{N}^{mH}_{n,t}(f)= \widehat{N}^{mH}_{n,1,t}(f)+\widehat{N}^{mH}_{n,2,t}(f)+\widehat{N}^{mH}_{n,3,t}(f)$.

	Under the given hypotheses, $\{n^{1-\delta -\alpha} \int_0^t \int_{\mathcal E} f(z) M_{n} ^{mH}(\diff s , \diff z) , t\geq 0\}$ is an $\mathscr{F}_{n}$-square integrable martingale with quadratic variation process 
	\begin{align*}
	&\left \langle n^{1-\delta -\alpha} \int_0^. \int_{\mathcal E} f(z) M_{n} ^{mH}(\diff s , \diff z) , n^{1-\delta -\alpha} \int_0^. \int_{\mathcal E} g(z) M_{n} ^{mH}(\diff s , \diff z) \right \rangle (t) \\
	&= n^{2-2\delta-\alpha} \int_0^t \int_{\mathcal E} f(z) g(z) \lambda_{n}(s) \pi^{n}_{s}( \diff z) \diff s, 
	\end{align*}
    which converges to $\bm{1}(\delta=1-\alpha/2) \int_0^t \int_{\mathcal E} f(z)g(z) C^{mH}(s) \ol{\pi}_s ({\diff z}) \diff s$ in probability as $n\ra \infty$ for every $t\geq 0$. In order to the verify that the expected maximum square jump of the martingale $\{\widehat{N}^{mH}_{n,1,t}(f), t\geq 0 \}$ vanishes to zero as $n\ra \infty$, i.e., 
    \begin{equation*}
        \lim_{n\ra \infty} \E \left( \sup_{0\leq t \leq T} |\widehat{N}^{mH}_{n,1,t}(f) - \widehat{N}^{mH}_{n,1,t-}(f) |^2 \right) =0.
    \end{equation*}
    By the cumulative jumps of the Poisson random measure $N_{n}^{mH}(\diff s, \diff z)$, 
    \begin{align*}
        \E \left( \sup_{0\leq t \leq T} |\widehat{N}^{mH}_{n,1,t}(f) - \widehat{N}^{mH}_{n,1,t-}(f) |^2 \right)
        \leq & \E \left( \sum_{0\leq t \leq T}|\widehat{N}^{mH}_{n,1,t}(f) - \widehat{N}^{mH}_{n,1,t-}(f) |^2 \right)\\
        = & n^{2-2\delta-2\alpha} \E \left(
        \int_0^T \int_{\mathcal{E} } |f(z)|^{2} N^{mH}_{n}( \diff s, \diff z) \right)\\
        & \leq n^{2-2\delta-2\alpha} ||f||_{\mathcal {E}}^2,
    \end{align*}
    and under the diffusion scaling $\delta=1-\alpha/2$, the above cumulative jumps is bounded by $\mathcal{O}(1/n^{\alpha})$, which eventually vanishes to zero as $n\ra \infty$.
    Therefore, by martingale central limit theorem $\{\widehat{N}^{mH}_{n,1,t}(f), t\geq 0\}$ converges in distribution to the Gaussian process $\{\mathcal W_{t}(f), t\geq 0\}$ with $\E (\mathcal W_{t}(f))=0$ and 
	\begin{equation*}
	\E \left( \mathcal W_{t}(f), \mathcal W_{t}(g) \right)
	=\int_{0}^{t} \int_{\mathcal E} f(z) g(z) C^{mH}(s) \ol{\pi}_{s}(\diff z) \diff s.
	\end{equation*}

    Next, in view of (\ref{def_Z_mH_diffusion}), we can rewrite $\widehat{N}^{mH}_{n,2,t}(f)$ as 
    \begin{equation*}
    \widehat{N}^{mH}_{n,2,t}(f)= n^{1-\delta-\alpha/2}   \int_0^t F_{f}^{n}(s)   \diff \widehat{Z}_{n}^{mH}(s), 
    \end{equation*}
    where 
    \begin{equation*}
     F_{n}^{f}(t):=\int_{\mathcal E} f(z)\pi^{n}_{t}(\diff z) .
    \end{equation*}
     $\{\widehat{Z}^{mH}_{n}(t), t\geq 0\}$ is absolute continuous with respect to the Lebesgue measure a.e., and converges in distribution to the diffusion process $\{\widehat{\mathcal{B}}^{mH}(t), t\geq 0\}$ in $\mathbb{C}\left([0,\infty),\mathbb{R}\right)$ as $n\ra \infty$ by virtue of Theorem \ref{thm_FCLT_intensity_process}. Consequently, it follows from Theorem 11.4.5 of \cite{whitt2002stochastic} and Assumption \ref{assm_pi_FLLN} that $\{(F_{n}^{f}(t), \widehat{Z}^{mH}_{n}(t)), t\geq 0\}$ jointly converges to $\{(F^{f}(t), \widehat{\mathcal{B}}^{mH}(t)), t\geq 0\}$
     in $\mathbb{D}\left([0,\infty), \mathbb{R}\right) \times \mathbb{C}\left([0,\infty), \mathbb{R}\right)$ embedded with Skorokhod product topology. However, the aforementioned joint convergence is not sufficient to conclude the Skorokhod weak convergence of $\{\widehat{N}^{mH}_{n,2,t}(f), t\geq 0\}$ in $\mathbb{D}\left([0,\infty), \mathbb{R}\right)$. For this purpose, we require the sequence of semimartingales $\{\{\widehat{Z}^{mH}_{n}(t), t\geq 0\}, n\in \mathbb{N}\}$ to be \textit{predictable uniformly tight (P-UT)} (see \cite{jacod2013limit}, \cite{sen2026multiscale}), which follows from Theorem VI.6.13 of \cite{jacod2013limit} as the predictable entity of the associated martingale $\{\widehat{\Upsilon}^{n}(t), t\geq 0\}$ is $\mathbb{C}$-tight by (\ref{QV_upsilon_n_t(f,g)}). In the end, Theorem VI.6.22 of \cite{jacod2013limit} concludes the convergence of $\{\widehat{N}^{mH}_{n,2,t}(f), t\geq 0\}$ to the integral process $\bm{1}(\delta= 1-\alpha/2)\{\int_0^t \int_{\mathcal{E}} f(z) \ol{\pi}_{s}(\diff z) \diff \widehat{\mathcal{B}}^{mH}(s), t\geq 0\}$ in $\mathbb{D}\left([0,\infty), \mathbb{R}\right)$ as $n\ra \infty$.

     Again, by Assumption \ref{assm_pi_FCLT}, it follows that the deterministic process $\{\widehat{N}^{mH}_{n,3,t}(f), t\geq 0\}$ converges to $\{\int_0^t \int_{\mathcal{E}}f(z) C^{mH}(s) \widehat{\pi}_{s}(\diff z) \diff s), t\geq 0\}$ as $n \ra \infty$.

     In the end, by virtue of Theorem 11.4.4 and 11.4.5 in \cite{whitt2002stochastic}, $\{(\widehat{N}^{mH}_{n,1,t}(f),\widehat{N}^{mH}_{n,2,t}(f)\widehat{N}^{mH}_{n,3,t}(f)), t\geq 0\}$ jointly converges to $\{(\bm{1}(\delta= 1-\alpha/2)\mathcal W_{t}(f), \bm{1}(\delta= 1-\alpha/2) \int_0^t \int_{\mathcal{E}} f(z) \ol{\pi}_{s}(\diff z) \diff \widehat{\mathcal{B}}^{mH}(s), 
     \int_0^t \int_{\mathcal{E}}f(z) C^{mH}(s) \widehat{\pi}_{s}(\diff z) \diff s), t\geq 0\}$ in $\mathbb{D}\left([0,\infty), \mathbb{R}\right)^3$ as $n\ra \infty$. Since the limiting processes have continuous trajectories $\mathbb{P}$-a.s, the requisite convergence of (\ref{proof_N_mH_FCLT_1}) is the direct consequence of the continuous mapping theorem.

    Finally after substituting $f\equiv \bm{1}$, we get the desired weak convergence. 	
     The proof is complete.
	\end{proof}

   \begin{proof}[Proof of Theorem \ref{thm_FCLT_Y_n(t)_process}]
   In the beginning of the proof, we decompose the process $\{\widehat{Y}^{mH}_{n}(t), t\geq 0\}$ as follows:
  \begin{align}\label{proof_Y_mH_FCLT_1}
  \widehat{Y}^{mH}_{n}(t)=
 \widehat{Y}^{mH}_{n,1}(t)+\widehat{Y}^{mH}_{n,2}(t)+\widehat{Y}^{mH}_{n,3}(t)+\widehat{Y}^{mH}_{n,4}(t),
  \end{align}
  where 
  \begin{align*}
   \widehat{Y}^{mH}_{n,1}(t)= &n^{1-\delta-\alpha}\int_0^t \int_{\mathcal{E}}\theta_{n}(t-s) M^{mH}_{n}(\diff s, \diff z),\\
   \widehat{Y}^{mH}_{n,2}(t)= &n^{1-\delta} \int_0^t \int_{\mathcal{E}} \theta_{n}(t-s) \left( \lambda^{mH}_{n}(s) - C^{mH}(s) \right) \pi_{s}^{n}(\diff z) \diff s, \\
   \widehat{Y}^{mH}_{n,3}(t)= &n^{1-\delta} \int_0^t \int_{\mathcal{E}} \left(\theta_{n}(t-s,z) - \ol{\theta}(t-s,z) \right) C^{mH}(s) \pi_{s}^{n}(\diff z) \diff s,\\
   \widehat{Y}^{mH}_{n,4}(t)= &
    n^{1-\delta} \int_0^t \int_{\mathcal{E}} \ol{\theta}(t-s,z) C^{mH}(s) \left(\pi^{n}_{s}(\diff z) - \ol{\pi}_{s}(\diff z) \right)  \diff s.
   \end{align*}
  Recall that $\{\widehat{Y}^{mH}_{n,1}(t), t\geq 0\}$ is an $\mathscr{F}_{n}$-square integrable martingale with quadratic variation process 
  \begin{equation*}
    \left \langle \widehat{Y}^{mH}_{n,1}\right \rangle (t)
    = n^{2-2\delta -\alpha} \int_0^t \int_{\mathcal{E}}
      \theta^{2}_{n}(t-s) \lambda^{mH}_{n}(s)\pi_{s}^{n}(\diff z) \diff s.
  \end{equation*}
  Applying the similar arguments used in the proof of Theorem \ref{thm_LLN_Y^{mH}(t)}, it can be easily verified that for any $T>0$ and $\epsilon>0$,
  \begin{align}
   & \lim_{n\ra \infty} \prob \left( \sup_{0\leq t \leq T} \left| \int_0^t \int_{\mathcal E}  \theta^{2}_{n}(t-s) \lambda^{mH}_{n}(s)\pi_{s}^{n}(\diff z) \diff s
   - \int_0^t \int_{\mathcal E} \ol{\theta}^2(t-s,z) C^{mH}(s) \ol{\pi}_s(\diff z) \diff s	\right|>\epsilon \right)=0 .
   \end{align}
  Also, analogous to the above proof, the expected maximum square jump of $\{\widehat{Y}^{mH}_{n,1}(t), t\geq 0\}$ is bounded by $\mathcal{O}(n^{2-2\delta-2\alpha})$, and consequently $\bm{1}(\delta=1-\alpha/2) \mathcal{O}(1/n^{\alpha})$ which converges to zero as $n\ra \infty$. 
  Therefore, by martingale central limit theorem $\{\widehat{Y}^{mH}_{n,1}(t), t\geq 0\}$ converges in distribution to the Gaussian process $\{\bm{1}(\delta=1-\alpha/2)\mathcal{W}_{\ol \theta}(t), t\geq 0 \}$ in $\md\left([0,\infty), \mathbb{R}\right) $ as $n\ra \infty$, where the limiting process $\{\mathcal{W}_{\theta}(t), t\geq 0 \}$ is characterized by 
  \begin{equation}\label{QV_W_theta_Y_mH_FCLT}
    \E \left( \mathcal{W}_{\ol \theta}(t) \right) =0 , \quad 
    \E \left(\mathcal{W}_{\ol \theta}(t)\mathcal{W}_{\ol \theta}(s) \right)= \int_0^{t\wedge s} \int_{\mathcal E} \ol{\theta}^2(t-v,z) C^{mH}(v) \ol{\pi}_v(\diff z) \diff v.
  \end{equation}

Analogous to the proof of Theorem \ref{thm_FCLT_N^{mH}(t)}, we rewrite $\widehat{Y}^{mH}_{n,2}(t)$ as 
\begin{align*}
    \widehat{Y}^{mH}_{n,2}(t)= n^{1-\delta-\alpha/2}
    \int_0^t \int_{\mathcal{E}}\theta_{n}(t-s,z) \pi^{n}_{s}(\diff z) \diff \widehat{Z}^{mH}_{n}(s).
\end{align*}
By Assumptions \ref{assm_pi_FLLN} and \ref{assm_theta_FLLN}, it follows that the deterministic entity $\int_{\mathcal{E}}\theta_{n}(t-s,z) \pi^{n}_{s}(\diff z)$ converges to $\int_{\mathcal{E}}\ol{\theta}(t-s,z) \ol{\pi}_{s}(\diff z)$ uniformly on compact sets as $n\ra \infty$.
Applying the similar P-UT arguments as used in the above proof, it can be proved that $\{\widehat{Y}^{mH}_{n,2}(t), t\geq 0\}$ weakly converges to $\{\bm{1}(\delta=1-\alpha/2)\int_0^t \int_{\mathcal{E}} \ol{\theta}(t-s,z) \ol{\pi}_{s}(\diff z) \diff \widehat{\mathcal{B}}^{mH}(s), t\geq 0\}$ in $\mathbb{D}\left( [0, \infty), \mathbb{R}\right)$ as $n\ra \infty$.

Next, it directly follows from Assumptions \ref{assm_pi_FLLN}, \ref{assm_theta_FLLN}, \ref{assm_pi_FCLT}, \ref{assm_theta_FCLT} that $\{\widehat{Y}^{mH}_{n,3}(t), t\geq 0\}$ and $\{\widehat{Y}^{mH}_{n,4}(t), t\geq 0\}$ respectively converges to $\{\int_0^t \int_{\mathcal{E}} \widehat{\theta}(t-s,z) C^{mH}(s) \ol{\pi}_{s}(\diff z) \diff s, t\geq 0\}$ and $\{ \int_0^t \int_{\mathcal{E}} \ol{\theta}(t-s,z) C^{mH}(s) \widehat{\pi}_{s}(\diff z) \diff s, t\geq 0\}$.

In the end, by virtue of Theorems 11.4.4 and 11.4.5 of \cite{whitt2002stochastic}, $\{(\widehat{Y}^{mH}_{n,1}(t), \widehat{Y}^{mH}_{n,2}(t), \widehat{Y}^{mH}_{n,3}(t), \widehat{Y}^{mH}_{n,4}(t)),t\geq 0\}$
 weakly converges to $\{(\bm{1}(\delta=1-\alpha/2) \mathcal{W}_{\ol \theta}(t), 
\bm{1}(\delta=1-\alpha/2) \int_0^t \int_{\mathcal{E}} \ol{\theta}(t-s,z) \ol{\pi}_{s}(\diff z) \diff \widehat{\mathcal{B}}^{mH}(s), 
\int_0^t \int_{\mathcal{E}} \widehat{\theta}(t-s,z) C^{mH}(s) \ol{\pi}_{s}(\diff z) \diff s,
\int_0^t \int_{\mathcal{E}} \ol{\theta}(t-s,z) C^{mH}(s) \widehat{\pi}_{s}(\diff z) \diff s) ,t \geq 0\} $
with trajectories in $\md \left([0, \infty), \mathbb{R}\right)^{4}$. Finally, the continuous mapping theorem results the desired convergence of (\ref{proof_Y_mH_FCLT_1}).

\end{proof}

\section{Conclusion and Future Work}\label{section_conclusion}
In this paper, we establish the functional limit theorems including the functional laws of large numbers and functional central limit theorems for the marked Hawkes process along with the shot noise process under the multi-scaling high intensity regime. This particular asymptotic regime is generated by multiplying the time parameter by $n^{\alpha}$ for some $\alpha>0$. Depending on $\alpha$, the diffusion scale parameter $\delta $ takes the value accordingly to derive the functional limit theorems under suitable assumptions, which develops a new asymptotic framework to study significantly different from the conventional asymptotic regime. Inside the new proposed asymptotic framework, we prove the diffusion approximations for the cumulative intensity process associated with the marked Hawkes process. Later, we obtain that the limiting  processes of the marked Hawkes process as well as shot noise process resulting from the functional central limit theorems is a sum of a Gaussian noise and diffusion process (associated with an another Gaussian process).

This work focuses on the univariate Hawkes model, and extending the analysis to the multivariate setting would provide a broader understanding of interactions across multiple components. Investigating how our asymptotic regime and marked structure behave in the multivariate case is an important direction for future research. Earlier, the Hawkes process without incorporating marks, i.e., for $\phi(t-\tau_{i}, \eta_{i}(\tau_{i}))=\phi(t-\tau_{i})$, $i\in \mathbb{N}$, 
 has been studied by \cite{bacry2013some} under multi-dimensional framework. Another direction for future work could involve studying asymptotic phenomena, including mean-field limits and large deviation principles for the framework developed in this paper. Moreover, it would be interesting to study the limiting behaviour of the proposed processes when the marks in the self-exciting function are scaled jointly with the time parameter.

\medskip{}
\noindent
\textbf{Author Contributions} Ankita Sen: Identified the problem, designed the model and analysed. Dharmaraja Selvamuthu: Revised the model and Critical revision of the article. N. Selvaraju: Updated the final manuscript.

\medskip{}
\noindent
\textbf{Availability of Data and Materials} Data availability is not applicable to this article as no new data were created or analysed in this study.

\medskip{}
\noindent
 \textbf{Declarations} The authors declare no competing interests.

\end{document}